\setlist[itemize]{leftmargin=11mm}
\setlist[enumerate]{leftmargin=11mm}
\newtheorem{statement}{statement}[section]
\newtheorem{theorem}[statement]{Theorem}
\newtheorem{lemma}[statement]{Lemma}
\newtheorem{conjecture}[statement]{Conjecture}
\newtheorem{corollary}[statement]{Corollary}
\newtheorem{observation}[statement]{Observation}
\DeclareMathOperator{\tw}{tw}
\DeclareMathOperator{\tri}{\triangleleft}
\DeclareMathOperator{\cone}{{\text{\sf{cone}}}}
\def\dd{\hbox{-}}   
\newcommand{\poi}{\mathbb{N}} 
\newcounter{tbox}
\newcommand{\sta}[1]{\medskip\medskip\refstepcounter{tbox}\noindent{\parbox{\textwidth}{(\thetbox) \emph{#1}}}\vspace*{0.3cm}}
\newcommand{\mylongtitle}[1]{%
  \ifodd\value{page}%
    \protect\parbox{0.97\linewidth}{#1}\hfill%
  \else%
    \hfill\protect\parbox{0.97\linewidth}{#1}%
  \fi%
}
\title[Induced subgraphs and tree decompositions XI.]{Induced subgraphs and tree decompositions\\
XI. Local structure in even-hole-free graphs of large treewidth}
\author{Bogdan Alecu$^{\ast \ast \mathparagraph}$}
\author{Maria Chudnovsky$^{\ast \amalg}$}
\author{Sepehr Hajebi $^{\mathsection}$}
\author{Sophie Spirkl$^{\mathsection \parallel}$}
\thanks{$^{\ast}$ Princeton University, Princeton, NJ, USA}
\thanks{$^{**}$ School of Computing, University of Leeds, Leeds, UK}
\thanks{$^{\mathsection}$ Department of Combinatorics and Optimization, University of Waterloo, ON, CA}
\thanks{$^{\amalg}$ Supported by NSF-EPSRC Grant DMS-2120644 and by AFOSR grant FA9550-22-1-0083.} 
     \thanks{$^{\mathparagraph}$ Supported by DMS-EPSRC Grant EP/V002813/1.} 
\thanks{$^{\parallel}$ We acknowledge the support of the Natural Sciences and Engineering Research Council of Canada (NSERC), [funding reference number RGPIN-2020-03912].
Cette recherche a \'et\'e financ\'ee par le Conseil de recherches en sciences naturelles et en g\'enie du Canada (CRSNG), [num\'ero de r\'ef\'erence RGPIN-2020-03912]. This project was funded in part by the Government of Ontario.}
\date {\today}
\begin{document}
\maketitle

\begin{abstract}
We prove a conjecture of Sintiari and Trotignon that every even-hole-free graph of sufficiently large treewidth contains a four-vertex induced subgraph with at least five edges (that is, either the four-vertex complete graph or the unique four-vertex graph with five edges, also known as the \textit{diamond}). 

In fact, we prove two stronger results: (a) For every $K_4$-free chordal graph $H$, every even-hole-free graph of sufficiently large treewidth contains either a four-vertex complete subgraph or an induced subgraph isomorphic to $H$ (when $H$ is the diamond, this yields their conjecture); and (b) For every $K_3$-free chordal graph $H$ (equivalently, for every forest $H$) and every $t \in \mathbb{N}$, every even-hole-free graph of sufficiently large treewidth contains either a $t$-vertex complete subgraph or an induced subgraph obtained from $H$ by adding a universal vertex (when $t=4$ and $H$ is the three-vertex path, this yields their conjecture).

The choice of $H$ in both result is best possible: (a) fails for every graph $H$ that is not $K_4$-free and chordal, and (b) fails for every graph $H$ that is not a forest.
\end{abstract}

\section{Introduction}\label{sec:intro}

Graphs in this paper have finite vertex sets, no loops and no parallel edges. Let $G=(V(G),E(G))$ be a graph. For $X\subseteq V(G)$, we use both $X$ and $G[X]$ to denote the subgraph of $G$ induced on $X$, and we write $G\setminus X$ for the graph obtained from $G$ by removing the vertices in $X$. We say that $G$ \textit{contains} a graph $H$ if $H$ is isomorphic to an induced subgraph of $G$; otherwise, we say $G$ is \textit{$H$-free}. The \textit{treewidth} of a graph $G$, denoted $\tw(G)$, is the smallest $w\in \poi\cup \{0\}$ for which there is a tree $T$ and a non-null subtree $T_v$ of $T$ for each vertex $v$ of $G$, such that the subtrees corresponding to adjacent vertices intersect, and each vertex of $T$ belongs to at most $w+1$ of the subtrees.

What induced subgraphs should be excluded from a graph in order to bound its treewidth? Motivated by the wide range of structural and algorithmic applications of treewidth, this question has received enormous attention in recent years. The grid theorem of Robertson and Seymour, \Cref{wallminor} below, answers the analogous question for minors and subgraphs. 

\begin{theorem}[Robertson and Seymour \cite{GMV}]\label{wallminor}
For every $t\in \poi$, there exists $w\in \poi$ such that
every graph with no minor (or equivalently, no subgraph) isomorphic to any subdivision of $W_{t\times t}$ has treewidth at most~$w$.
\end{theorem}
(For every $t\in \poi$, we denote by $W_{t\times t}$ the $t$-by-$t$ hexagonal grid, also known as the \textit{$t$-by-$t$ wall}; see Figure~\ref{fig:4basic}. It is well known \cite{diestel} that every subdivision of $W_{t\times t}$ has treewidth $t$.)
\medskip

Excluding subdivided walls as induced subgraphs, however, is not sufficient to bound the treewidth. Complete graphs, complete bipartite graphs, and the line graphs of subdivided walls are all examples of graphs with arbitrarily large treewidth and no induced subgraph isomorphic to any subdivision of $W_{3\times 3}$ (recall that the {\em line graph} $L$ of a graph $F$ has vertex set $E(F)$, with two vertices of $L$ adjacent whenever the corresponding edges of $F$ share an endpoint). These three constructions are still quite simple; thus, we place them alongside subdivided walls themselves to obtain a list of ``basic'' obstructions to bounded treewidth. For $t\in \poi$, a \textit{$t$-basic obstruction} is a graph isomorphic to $K_{t+1}$, $K_{t,t}$, some subdivision of $W_{t\times t}$, or the line graph of some subdivision of $W_{t\times t}$ (see Figure~\ref{fig:4basic}).
\begin{figure}[t!]
    \centering
    \includegraphics[scale=0.7]{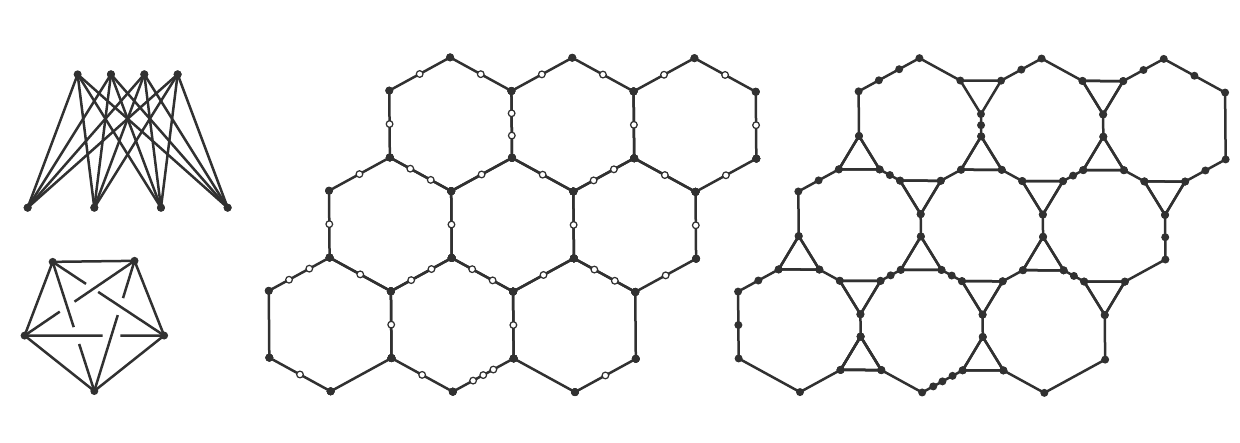}
    \caption{The $4$-basic obstructions, with a subdivided $4$-by-$4$ wall in the middle and its line graph on the right.}
    \label{fig:4basic}
\end{figure}

It is well known \cite{diestel} that the $t$-basic obstructions all have treewidth exactly $t$. Thus, a necessary condition for bounded treewidth is to exclude, for some $t\geq 1$, a $t$-basic obstruction of each type as an induced subgraph. If this were also sufficient, it would yield an elegant counterpart of \Cref{wallminor} for induced subgraphs; indeed, it is not hard to show (see \cite{twvii}) that excluding the $2$-basic obstructions does yield bounded treewidth. Unfortunately, the picture already breaks down here: the treewidth can be arbitrarily large in graphs that exclude the $3$-basic obstructions, and this remains true even if we exclude, along with $K_4$, a much simpler family of graphs than the other three basic obstructions: even holes.

A \textit{hole} in a graph $G$ is an induced cycle on at least four vertices; an \textit{even hole} is a hole of even length; and a graph is \textit{even-hole-free} if it contains no even hole. It is easy to observe that all $3$-basic obstructions except for $K_4$ contain even holes (see Figure~\ref{fig:basicwith3PC}, and also Section~\ref{sec:defns} for the definitions of a ``theta'' and a ``prism''). 

Therefore, being (even hole, $K_4$)-free is a much stronger condition than excluding the $3$-basic obstructions. Surprisingly, Sintiari and Trotignon showed that even under this stronger assumption, the treewidth can be arbitrarily large, and that excluding a finite number of \textit{odd} holes as well does not help either:

\begin{theorem}[Sintiari and Trotignon \cite{layered-wheels}]\label{thm:STLW}
    For all $h,w\in \poi$, there is an (even hole, $K_4$)-free graph $G_{h,w}$ with $\tw(G_{h,w})>w$ such that every hole in $G_{h,w}$ has length greater than $h$.
\end{theorem}

\begin{figure}[t!]
    \centering
    \includegraphics[scale=0.65]{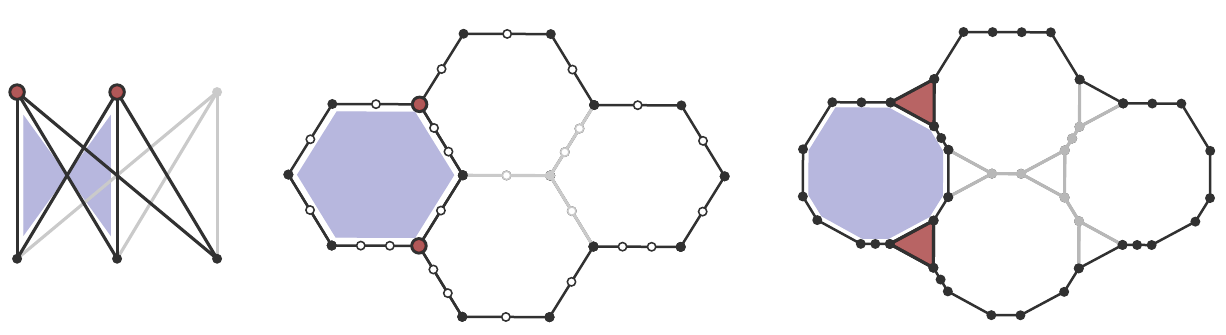}
    \caption{The $3$-basic obstructions other than the complete graph $K_4$, featuring a theta in $K_{3,3}$ (left), a theta in a subdivision of $W_{3\times 3}$ (middle), and a prism in the line graph of the same subdivided $W_{3\times 3}$ (right). An even hole in each theta and prism is highlighted. (See \Cref{sec:defns} for the definitions.)}
    \label{fig:basicwith3PC}
\end{figure}

The graphs $G_{h,w}$ in \Cref{thm:STLW}, explicitly constructed in \cite{layered-wheels}, are early examples of ``layered wheels,'' a rich class of constructions used \cite{newlayeredwheels, CT, layered-wheels} as counterexamples to several conjectures about the interplay between treewidth and induced subgraphs. 

Layered wheels are typically quite complicated (the even-hole-free one in \Cref{thm:STLW} remains the most complicated to date), and recent results suggest that, in a convincing sense, ``layered-wheel-like'' obstructions to bounded treewidth are the only remaining barrier to a full analog of \Cref{wallminor} for induced subgraphs (see \cite{barbados,tw16} for further discussion). 

As a result, there is considerable interest in understanding the structural properties of layered wheels, particularly the induced subgraphs that must appear in layered wheels of sufficiently large treewidth.  For instance, Sintiari and Trotignon observed \cite{layered-wheels} that, in order to maintain large treewidth while remaining even-hole-free, the graphs $G_{h,w}$ from \Cref{thm:STLW} tend strongly to contain induced ``diamonds'' (recall that the \textit{diamond} is the graph obtained from $K_4$ by deleting an edge). So they conjectured that diamonds are unavoidable induced subgraphs not only in the layered wheels of \Cref{thm:STLW}, but in \textit{all} (even hole, $K_4$)-free graphs of large treewidth.

\begin{conjecture}[Sintiari and Trotignon \cite{layered-wheels}]\label{diamondconj}
    Every (even hole, $K_4$)-free graph of large enough treewidth contains the diamond.
\end{conjecture}

In this paper, we prove \Cref{diamondconj}. It is worth noting, however, that the mere presence of induced diamonds may not seem like a particularly deep insight into the local structure of (even hole, $K_4$)-free graphs of large treewidth. After all, what other graphs $H$ are unavoidable induced subgraphs of such graphs? Our main result answers this question in full (and thereby proves \Cref{diamondconj} as a special case).

Let $H$ be a graph such that every (even hole, $K_4$)-free graph of sufficiently large treewidth has an induced subgraph isomorphic to $H$. In particular, $H$ must appear as an induced subgraph of the graphs $G_{h,w}$ in \Cref{thm:STLW} for all $h\in \poi$ and all sufficiently large $w\in \poi$. Therefore, $H$ must be $K_4$-free and contain no hole. Graphs with no holes are called \textit{chordal} (see Figure~\ref{fig:K4freechordal}).

\begin{observation}\label{obs:LWlocal}
    Let $H$ be a graph. Then every (even hole, $K_4$)-free graph of sufficiently large treewidth has an induced subgraph isomorphic to $H$ only if $H$ is $K_4$-free and chordal.
\end{observation}

Our main result is the converse to \Cref{obs:LWlocal}:

\begin{theorem}\label{mainchordal}
Every (even hole, $K_4$)-free graph of sufficiently large treewidth contains an induced subgraph isomorphic to $H$ if and only if $H$ is $K_4$-free and chordal.
\end{theorem}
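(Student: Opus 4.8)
The ``only if'' direction is exactly Observation~\ref{obs}, so all the content is in the converse: assuming $H$ is a $K_4$-free chordal graph, I want to show that every (even-hole, $K_4$)-free graph of sufficiently large treewidth contains $H$. The first thing to notice is that the case where $H$ is a forest is already available. Indeed, a theta contains an even hole (the three internally disjoint paths have lengths whose pairwise sums cannot all be odd, so two of them form an even hole), and likewise a prism contains an even hole; hence every even-hole-free graph is automatically (theta, prism)-free. Consequently a (even-hole, $K_4$)-free graph of large treewidth is (theta, prism, $K_4$)-free of large treewidth, and Theorem~\ref{tw8} (with $t=4$) already guarantees that it contains every forest. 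It therefore remains to handle $K_4$-free chordal graphs $H$ that contain a triangle.

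For those, my plan is to prove a \emph{local structure theorem} in the spirit of the paper's title: every (even-hole, $K_4$)-free graph $G$ of sufficiently large treewidth contains, as an \emph{induced} subgraph, one of a short list of canonical richly-structured configurations, built from a large subdivided-wall scaffold together with attached triangles and fans, and then to show by a direct embedding argument that each such configuration already contains every $K_4$-free chordal graph on at most $k$ vertices, where $k\ge |V(H)|$. For the embedding step I would use the recursive description of $K_4$-free chordal graphs — they are exactly the graphs built from the empty graph by repeatedly adding a simplicial vertex whose neighbourhood is a clique of size at most $2$: the wall scaffold supplies long induced paths and degree-three branch vertices to route the ``tree-like part'' of $H$ exactly as in the forest case, while the attached triangles, placed in far-apart regions of the scaffold, realise the simplicial vertices with two neighbours and can be taken private so that no spurious edges appear. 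A convenient reduction here is that for any vertex $v$ of $H$ the closed neighbourhood $H[N_H[v]]$ is a forest plus a universal vertex (because $H[N_H(v)]$ is triangle-free and chordal), which links this to the companion ``$F$ plus universal vertex'' statement and lets me reuse that machinery for the ``single triangle through a prescribed vertex'' sub-case; it is precisely the genuinely spread-out configurations, such as two vertex-disjoint triangles, that cannot be produced from one high-degree hub and so force the use of the wall scaffold.

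The hard part, and the real substance, is the local structure theorem itself. Starting from a large subdivision of a wall produced by the Grid Theorem (Theorem~\ref{wallminor}), one has to understand all possible attachments of the rest of $G$ to this wall. Even-hole-freeness is a strong constraint here: every even hole that a chord, or a path running through the wall, would create must be destroyed, and this parity condition, together with $K_4$-freeness, should reduce the possibilities to a controlled family of wheel-like and fan-like local pictures, out of which the canonical configurations can be carved as induced subgraphs. Carrying this out — in particular, keeping enough ``room'' in the scaffold to accommodate all the required private triangles simultaneously, and controlling the interaction between several attachments at once — is where essentially all of the difficulty lies, and it is here that the structural results of the earlier papers in the series would be brought in.

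I expect the balance of the argument to be: the forest case and the ``$F$ plus universal vertex'' ingredients are inputs; the embedding of an arbitrary $K_4$-free chordal graph into a sufficiently rich scaffold is a careful but essentially bookkeeping argument; and the extraction of such a scaffold from large treewidth under the (even-hole, $K_4$)-free hypothesis is the main obstacle, because it requires a tight description of the local structure of these graphs rather than a mere containment statement.
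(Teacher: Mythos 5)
Your setup is correct as far as it goes: the ``only if'' direction is exactly Observation~\ref{obs}; the parity argument showing that thetas and prisms contain even holes is sound, so the forest case does follow from Theorem~\ref{tw8}; and your recursive description of $K_4$-free chordal graphs via simplicial vertices with clique neighbourhoods of size at most $2$ matches the paper's reduction to $2$-trees (Theorem~\ref{thm:k-tree}). But you then defer the entire content of the theorem to a ``local structure theorem'' built on an induced subdivided-wall scaffold with attached triangles, and this object does not exist in the setting you need. The Grid Theorem gives a subdivided wall as a \emph{subgraph} (or minor), not as an \emph{induced} subgraph, and for $t\geq 3$ every subdivision of $W_{t\times t}$ contains a theta and hence an even hole; so in an (even-hole, $K_4$)-free graph of large treewidth there is no large induced wall to attach anything to. The induced structure sitting on the wall's vertex set is precisely the unresolved difficulty — a full structure theorem for even-hole-free graphs of large treewidth along these lines is not currently available — and you explicitly acknowledge you have not supplied it. That makes this a genuine gap rather than an alternative proof.

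For comparison, the paper takes a route that avoids walls entirely. It extracts from large treewidth a \emph{strong block} (Corollary~\ref{cor:noblocksmalltw_Ct}), i.e.\ a set of vertices pairwise joined by many internally disjoint paths, and then grows the target $2$-tree one vertex at a time in reverse simplicial order, maintaining as an invariant a ``kaleidoscope'' — many holes sharing a three-vertex path and otherwise disjoint — which ``$3$-mirrors'' the partial $2$-tree (Theorems~\ref{thm:kaleidoscopeexists}, \ref{thm:difftodiff}, and \ref{thm:mainblurry}). Common neighbours of the current pivot pair are located on each hole via Theorem~\ref{thm:evenwheeltheta}, and the crucial step of choosing a good next vertex rests on a genuinely new device (Section~\ref{sec:<3}): a class-preserving minor operation $G\tri^{z_1}_{z_2}$, which contracts the pivot edge and deletes the contracted vertex's non-common neighbours while remaining in the class $\mathcal{E}$, so that Theorem~\ref{banana} can be applied as though certain edges were absent. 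None of this machinery — strong blocks, kaleidoscopes, mirroring, or the class-preserving minor — appears in your outline, and without it the embedding and bookkeeping you describe has nothing to embed into.
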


\begin{figure}
    \centering
    \includegraphics[scale=0.6]{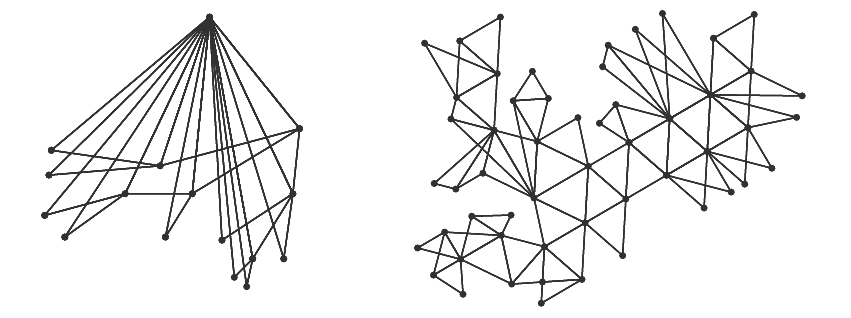}
    \caption{A coned tree as in Theorems~\ref{thm:mainforest+} (left) and \ref{thm:tree+} (middle), and an arbitrary $2$-tree (right).}
    \label{fig:K4freechordal}
\end{figure}

Since the diamond is $K_4$-free and chordal, \Cref{mainchordal} implies \Cref{diamondconj} immediately. But it also suggests something stronger: despite their intricate construction, even-hole-free layered wheels are canonical from a local perspective, in that every (even hole, $K_4$)-free graph of large treewidth, when ``zoomed in,'' displays a layered-wheel-like structure. 

\Cref{diamondconj} may also be interpreted in another way. Note that a diamond can be obtained from a two-edge path by adding a universal vertex. Thus, \Cref{diamondconj} states that every (even hole, $K_4$)-free graph of sufficiently large treewidth has an induced subgraph obtained from a two-edge path by adding a universal vertex. Our approach to proving \Cref{mainchordal} yields another extension of \Cref{diamondconj} in this interpretation, where ``$K_4$'' is replaced by ``$K_t$'' for arbitrary $t\in \poi$, and the ``two-edge path'' is replaced by an arbitrary forest (recall that a \textit{forest} is a graph with no cycles, that is, a graph whose components are trees). Surprisingly, by \Cref{thm:STLW}, this also characterizes forests. For a graph $F$, let $\cone(F)$ denote the graph obtained from $F$ by adding a vertex adjacent to all vertices in $V(F)$. Then $\cone(F)$ is $K_4$-free and chordal if and only if $F$ is a forest. We prove:

\begin{theorem}\label{thm:mainforest+}
For all $t\in \poi$, every (even hole, $K_t$)-free graph of sufficiently large treewidth has an induced subgraph isomorphic to $\cone(F)$, if and only if $F$ is a forest.
\end{theorem}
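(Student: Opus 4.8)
The plan is to handle the two directions separately; the ``only if'' direction is short, and the ``if'' direction is the substantial part.

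For ``only if'', suppose $F$ is not a forest, and let $C$ be an induced cycle of $F$, with $\ell=|V(C)|\geq 3$. Writing $a$ for the apex of $\cone(F)$, we have $\cone(F)[V(C)\cup\{a\}]=\cone(C)$, so $\cone(F)$ contains $K_4$ if $\ell=3$ and contains the hole $C$ if $\ell\geq 4$. Take $t=4$ and $h=\max(\ell,4)$. By Theorem~\ref{layeredwheel1}, for every $w\geq 1$ there is an (even-hole, $K_4$)-free graph $G$ of treewidth more than $w$ and with no hole of length at most $h$; such a $G$ is $K_4$-free and has no hole of length $\ell$, so it contains no copy of $\cone(F)$. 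Hence there is no $w$ for which every (even-hole, $K_4$)-free graph of treewidth more than $w$ contains $\cone(F)$, as required.

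For ``if'', suppose $F$ is a forest and fix $t\geq 1$. If $t\leq 3$ then (even-hole, $K_t$)-free graphs have bounded treewidth --- trivially for $t\leq 2$, and for $t=3$ by \cite{evenholetrianglefree}, which shows that (even-hole, $K_4$)-free graphs of large treewidth contain a triangle --- so the statement holds vacuously; thus assume $t\geq 4$. The first step is the observation that it suffices to find, in any given (even-hole, $K_t$)-free graph $G$ of sufficiently large treewidth, a vertex $v$ whose neighbourhood induces a subgraph containing $F$: for then $G[\{v\}\cup S]\cong\cone(F)$, where $S\subseteq N(v)$ induces a copy of $F$. The second step reduces this, for all forests $F$ simultaneously, to a single family: every forest is an induced subgraph of some tree (link the components of $F$ through one new vertex), and every tree on at most $k$ vertices is an induced subgraph of the complete $k$-ary tree $R_k$ of height $k$ (a greedy embedding works, and in a forest being a subgraph coincides with being an induced subgraph), so $\cone(F)$ is an induced subgraph of $\cone(R_k)$ once $k$ is large enough in terms of $|V(F)|$. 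Hence it is enough to prove the local structure statement: \emph{for all $k,t\geq 1$ there is $w=w(k,t)$ such that every (even-hole, $K_t$)-free graph of treewidth more than $w$ contains $\cone(R_k)$} (equivalently, has a vertex whose neighbourhood contains an induced copy of $R_k$).

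For $t\leq 4$ this local structure statement is immediate from Theorem~\ref{mainchordal}: $\cone(R_k)$ has clique number $3$, and it is chordal since any hole would either miss the apex (impossible, as $R_k$ is a tree) or pass through it (impossible, as the apex is complete to $R_k$ and would be a chord of any such hole), so $\cone(R_k)$ is a $K_4$-free chordal graph. For $t\geq 5$, though, the statement is strictly stronger than what Theorem~\ref{mainchordal} yields on its own: the (even-hole, $K_4$)-free graphs of Theorem~\ref{layeredwheel1} are (even-hole, $K_t$)-free of unbounded treewidth in which every neighbourhood is triangle-free, hence of bounded treewidth by \cite{evenholetrianglefree}, so $\cone(R_k)$ cannot be located by first finding a neighbourhood of large treewidth, nor by restricting to a $K_4$-free induced subgraph of large treewidth (which need not exist). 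Instead I would obtain the $t\geq 5$ case by re-running the machinery behind Theorem~\ref{mainchordal} (and Theorem~\ref{tw8}) with $K_t$ in place of $K_4$: begin with the Grid Theorem (Theorem~\ref{wallminor}); use that $K_{t,t}$, subdivided walls, and line graphs of subdivided walls all contain thetas or prisms and hence even holes, to force the unavoidable large-treewidth obstruction in an even-hole-free graph to be ``wheel-like''; and then exploit the interaction of such wheel structures with even-hole-freeness (which forbids not just thetas and prisms but \emph{all} even holes, and so is stronger than what is used in \cite{twviii}) and with $K_t$-freeness to carve out, around a single vertex, an induced copy of $R_k$. This last step --- converting a coarse ``wheel-like'' obstruction into a clean cone over a large tree --- is the main obstacle, and is the local-structure core of the argument.
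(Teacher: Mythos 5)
Your ``only if'' argument is correct and matches the paper's (which also invokes Theorem~\ref{layeredwheel1}). Your reduction of the ``if'' direction to the single family $\cone(R_k)$, the dispatch of small $t$, and the observation that the Sintiari--Trotignon graphs have triangle-free neighbourhoods (so one cannot find $\cone(R_k)$ by locating a large-treewidth neighbourhood) are all sound and show you have correctly located where the real work is. But you then explicitly leave that work undone: you describe the $t\geq 5$ case as ``re-running the machinery behind Theorem~\ref{mainchordal}'' and name the ``last step --- converting a coarse `wheel-like' obstruction into a clean cone over a large tree --- [as] the main obstacle,'' without resolving it. That step is not a routine adaptation; it is the entire content of Sections~\ref{sec:<3}--\ref{sec:kaleidoscope} of the paper (kaleidoscopes, mirroring, the class-preserving minor operation $G\tri^{z_1}_{z_2}$, and the $2$-tree growing process of Lemma~\ref{lem:mainblurryinduct}). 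So as written, your proposal has a genuine gap at the core of the ``if'' direction.

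One framing choice worth flagging, since it would make your plan harder than necessary: you reduce to finding an \emph{induced} copy of $\cone(R_k)$, i.e.\ an induced $R_k$ inside a neighbourhood. The paper never does this. Instead Theorem~\ref{thm:bigtwblurry} produces only a \emph{blurry} copy of the $2$-tree $\cone(T_f^f)$ --- an induced subgraph $G[X]$ with a \emph{spanning subgraph} isomorphic to $\cone(T_f^f)$, which by Observation~\ref{obs:blurry} is an induced copy only if $G$ is $K_4$-free. For general $t$, the paper extracts the apex $x$ (complete to $X\setminus\{x\}$), notes that $G[X\setminus\{x\}]$ is $(K_{3,3},K_t)$-free and contains $T_f^f$ as a subgraph, and then invokes Kierstead--Penrice (Theorem~\ref{KP}) to recover an induced $T_d^r\supseteq T$ inside it. That is, a subgraph copy of a large complete tree plus a Ramsey-type theorem suffices; you do not need, and should not aim to produce directly, an induced large complete tree in the neighbourhood. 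Incorporating this relaxation would make the ``wheel-like obstruction $\to$ cone over tree'' step you identified considerably more tractable, but the step itself still needs an actual argument.
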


\Cref{thm:mainforest+} in particular shows that Conjecture~\ref{diamondconj} holds in general for even-hole-free graphs of bounded clique number, which in turn extends the main result of \cite{twiv}:

\begin{corollary}\label{diamondthm}
    For all $t\in \poi$, (even hole, diamond, $K_t$)-free graphs have bounded treewidth.
\end{corollary}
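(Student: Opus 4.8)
The plan is to deduce this immediately from Theorem~\ref{thm:mainforest+}. The key observation is that the diamond is precisely $\cone(P)$, where $P$ is the two-edge path on three vertices: writing $P = a\mh b\mh c$, the graph $\cone(P)$ has the five edges $ab$, $bc$, $va$, $vb$, $vc$ and the single non-edge $ac$, which is exactly $K_4$ with one edge deleted. Since $P$ is a forest, Theorem~\ref{thm:mainforest+} provides, for each $t\geq 1$, an integer $w(t)\geq 1$ such that every (even-hole, $K_t$)-free graph of treewidth more than $w(t)$ contains $\cone(P)$, that is, the diamond.

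I would then take the contrapositive. Fix $t\geq 1$ and let $G$ be an (even-hole, diamond, $K_t$)-free graph. Then $G$ is (even-hole, $K_t$)-free and does not contain the diamond, so by the above $G$ cannot have treewidth more than $w(t)$; hence $\tw(G)\leq w(t)$, a bound depending only on $t$. This is exactly the assertion that (even-hole, diamond, $K_t$)-free graphs have bounded treewidth. In particular, specializing to $t=4$ recovers the bounded-treewidth result of \cite{twiv} for (even-hole, diamond, $K_4$)-free graphs, now as a single instance of a uniform statement; and for $t\leq 3$ the conclusion is anyway immediate (respectively trivial), so no case analysis is needed.

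I expect no genuine obstacle at this stage: once Theorem~\ref{thm:mainforest+} is in hand, the only thing to verify is the elementary identity that the diamond equals $\cone(P)$ for $P$ the two-edge path. All the actual difficulty is concentrated in the proof of Theorem~\ref{thm:mainforest+} itself — and, upstream of it, in establishing the local-structure results for (even-hole, $K_t$)-free graphs of large treewidth — but those are taken as given here.
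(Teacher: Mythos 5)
Your proposal is correct and matches the paper's derivation: the paper states Corollary~\ref{diamondthm} precisely as a consequence of Theorem~\ref{thm:mainforest+}, using the same observation that the diamond is $\cone(P)$ for $P$ the two-edge path (a forest).
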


\subsection{Reduction to $2$-trees}\label{subs:ktree} Note that the ``only if'' implications in both Theorems~\ref{mainchordal} and \ref{thm:mainforest+} follow directly from Observation~\ref{obs:LWlocal}. For the ``if'' implication, we prove the corresponding results in the slightly more general class $\mathcal{E}$ of ($C_4$, theta, prism, even wheel)-free graphs (where $C_4$ denotes the $4$-vertex cycle, and a definition of ``even wheels'' will appear in the next section). At the same time, as a convenient technical step in our proof, we will reduce the ``if'' implication in Theorems~\ref{mainchordal} and \ref{thm:mainforest+} to the case where the excluded chordal graph is ``maximal'' with respect to its clique number.

Let us elaborate. For an integer $n$, we write $[n]$ for the set of all positive integers less than or equal to $n$ (so $[n]=\emptyset$ if $n\leq 0$). A well-known characterization of chordal graphs due to Dirac \cite{dirac} shows that for all integers $h,k\geq 1$, an $h$-vertex graph $H$ is a $K_{k+2}$-free chordal graph if and only if there exists a bijection $\pi:V(H)\rightarrow [h]$ such that for every $i\in [h-1]$, the neighborhood of $\pi(i)$ in $V(H)\setminus \pi([i])$ is a clique of cardinality at most $k$. This inspires the following definition: a \textit{$k$-tree} is a graph $\nabla$ which is either a $k$-vertex complete graph, or we have $|V(\nabla)|=h>k$ and there exists a bijection $\varpi_{\nabla}:V(\nabla)\rightarrow [h]$ such that for every $i\in [h-k]$, the set of neighbors of $\varpi_{\nabla}(i)$ in $V(\nabla)\setminus \varpi_{\nabla}([i])$, which we refer to as the \textit{forward neighbors of $\varpi_{\nabla}(i)$ in $\nabla$}, is a clique of cardinality \textit{exactly} $k$ in $\nabla$. For instance, $1$-trees are exactly trees. It follows that every $k$-tree is a connected, $K_{k+2}$-free chordal graph. More importantly, a partial converse holds, too (the proof is straightforward, yet we include it to keep the paper self-contained).
\begin{figure}
    \centering
    \includegraphics[scale=0.7]{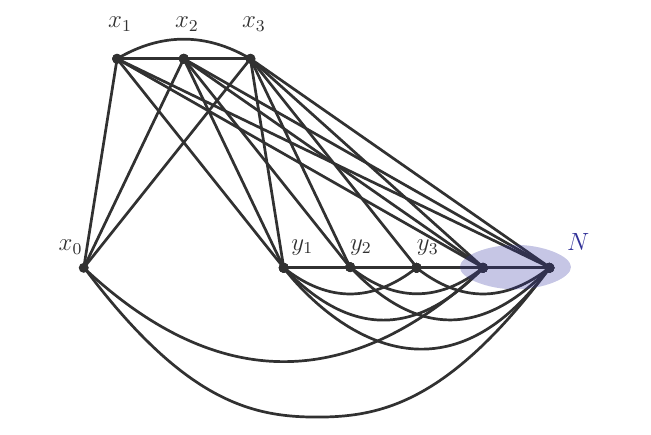}
\caption{Proof of Theorem~\ref{thm:k-tree} (when $k=5$ and $|N|=2$).}
    \label{fig:k-tree}
\end{figure}
\begin{theorem}\label{thm:k-tree}
    For every $k\geq 1$ and every $K_{k+2}$-free chordal graph $H$, there exists a $k$-tree $\nabla$ such that $H$ is an induced subgraph of $\nabla$.
\end{theorem}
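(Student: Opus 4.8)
The plan is to set up an induction on $|V(H)|$ in which we strip off a simplicial vertex of $H$, recurse on what remains, and then re-insert the stripped vertex after slightly enlarging the $k$-tree produced for the rest of $H$. Before starting, I would isolate one auxiliary fact: \emph{in every $k$-tree $\Gamma$, every clique of size at most $k$ is contained in a clique of size exactly $k$.} This can be proved by induction on $|V(\Gamma)|$. If $\Gamma=K_k$, the whole vertex set works. Otherwise let $x=\varpi_{\Gamma}^{-1}(1)$; then $Q:=N_{\Gamma}(x)$ is a $k$-clique and $\Gamma\setminus x$ is again a $k$-tree. Given a clique $C$ of $\Gamma$ with $|C|\le k$: if $x\notin C$ we apply the inductive hypothesis to $\Gamma\setminus x$; if $x\in C$ then $C\setminus\{x\}\subseteq Q$, and $\{x\}$ together with any $(k-1)$-element subset of $Q$ containing $C\setminus\{x\}$ is a $k$-clique containing $C$.

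Now for the theorem, induct on $h=|V(H)|$. The case $h=0$ is trivial (the empty induced subgraph of $K_k$). Suppose $h\ge 1$. By Dirac's characterization quoted above there is a bijection $\pi\colon V(H)\to[h]$ as described; set $v:=\pi^{-1}(1)$, so that $C:=N_H(v)$ is a clique of size $j:=|C|\le k$. The graph $H':=H\setminus v$ is $K_{k+2}$-free and chordal, so by the inductive hypothesis there is a $k$-tree $\nabla'$ with $\nabla'[V(H')]=H'$; after renaming vertices we may assume that $v$ together with a set $\{u_1,\dots,u_{k-j}\}$ of entirely new vertices lies outside $V(\nabla')$. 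Observe that $C$ is a clique of $\nabla'$.

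The heart of the argument is to ``pad'' $C$ up to a $k$-clique using the $u_\ell$. Set $\nabla'_0:=\nabla'$; for $\ell=1,\dots,k-j$, use the auxiliary fact to pick a $k$-clique $K_\ell$ of $\nabla'_{\ell-1}$ with $C\cup\{u_1,\dots,u_{\ell-1}\}\subseteq K_\ell$ (legitimate, since $C\cup\{u_1,\dots,u_{\ell-1}\}$ is a clique of $\nabla'_{\ell-1}$ of size $j+\ell-1\le k-1$), and let $\nabla'_\ell$ be $\nabla'_{\ell-1}$ with $u_\ell$ added adjacent exactly to $K_\ell$. Placing $u_\ell$ first in the elimination ordering shows each $\nabla'_\ell$ is a $k$-tree, and $C\cup\{u_1,\dots,u_\ell\}$ is a clique of $\nabla'_\ell$. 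Finally let $\nabla$ be $\nabla'_{k-j}$ with $v$ added adjacent exactly to the $k$-clique $K:=C\cup\{u_1,\dots,u_{k-j}\}$; then $\nabla$ is a $k$-tree. It remains to check that $\nabla[V(H)]=H$: the vertices $u_1,\dots,u_{k-j}$ lie outside $V(H)$ and so do not affect the induced subgraph on $V(H)$; adjacencies inside $V(H')$ are unchanged from $\nabla'$ and hence induce $H'$; and $N_\nabla(v)\cap V(H')=K\cap V(H')=C=N_H(v)$.

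The one real subtlety --- and the reason for the padding vertices $u_\ell$ --- is the conflict in the re-insertion step: to keep $\nabla$ a $k$-tree the neighborhood of $v$ must be a clique of size exactly $k$, while to keep $H$ as an induced subgraph that neighborhood must meet $V(H)$ in exactly $N_H(v)$, which may be smaller. The auxiliary fact is precisely what lets us grow, on top of $\nabla'$, a ``private'' $k$-clique extending $N_H(v)$ by fresh vertices only. I expect the most tedious (but entirely routine) point to be the book-keeping with the elimination orderings $\varpi$, i.e. verifying that prepending each new vertex really does exhibit the enlarged graph as a $k$-tree.
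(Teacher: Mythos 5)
Your proof is correct, and the overall plan is the same as the paper's: induct on $|V(H)|$, peel off a simplicial vertex $v$ using Dirac's ordering, recurse on $H\setminus v$ to obtain a $k$-tree, pad $N_H(v)$ up to a $k$-clique with fresh vertices, and re-attach $v$. Where you genuinely diverge is in how the padding is organized and justified. You isolate a clean, fully general auxiliary lemma (\emph{in a $k$-tree every clique of size at most $k$ extends to a $k$-clique}), prove it by its own induction, and then apply it iteratively, adding one auxiliary vertex at a time; in particular this handles $N_H(v)=\emptyset$ for free, so you never need to reduce to connected $H$. The paper instead first reduces to connected $H$ (so that $N$ is nonempty), proves only the special instance ``some $k$-clique of $\nabla^-$ contains $N$'' inline by picking the $\varpi$-minimal $x\in N$ and taking its forward neighbors (an argument that tacitly assumes $x$ has a full set of $k$ forward neighbors, i.e. is not among the last $k$ vertices of the ordering --- an easy edge case they do not spell out), and then writes down the enlarged $k$-tree explicitly in one shot via a ``chain'' of new vertices with prescribed adjacencies. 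Your modular lemma-plus-iteration makes the verification more transparent and slightly more robust, while the paper's one-shot construction is more concrete and shows exactly which edges get added. Both are valid; the content is the same.
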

\begin{proof}
   Note that every $K_{k+2}$-free chordal graph $H$ is an induced subgraph of a connected $K_{k+2}$-free chordal graph (which, for instance, may be obtained from $H$ by adding a new vertex with exactly one neighbor in each component of $H$). Therefore, we only need to show that for every connected $K_{k+2}$-free chordal graph $H$, there exists a $k$-tree $\nabla$ such that $H$ is an induced subgraph of $\nabla$. We prove this by induction on $|V(H)|=h$. The case $h=1$ is trivial, so assume that $h>1$. 
   
   Let $\pi:V(H)\rightarrow [h]$ be the bijection obtained from Dirac's result. Thus, for every $i\in [h-1]$, the set of neighbors of $\pi(i)$ in $V(H)\setminus \pi([i])$ is a clique on at most $k$ vertices in $H$. Let $\pi(1)=x_0$, let $H^-=H\setminus \{x_0\}$, and let $N$ be the set of neighbors $x_0$ in $H$. Then $N$ is a non-empty clique on at most $k$ vertices in $H$, which in turn implies that $H^-$ is a connected $K_{k+2}$-free chordal graph on $h-1$ vertices and $N$ is a non-empty clique on at most $k$ vertices in $H^-$.
    By the induction hypothesis, there is a $k$-tree $\nabla^-$ such that $H^-$ is an induced subgraph of $\nabla^-$. In particular,  $N$ is a non-empty clique on at most $k$ vertices in  $\nabla^-$. We deduce:
   
   \sta{\label{st:maximalclique}There is a clique $K$ of cardinality $k$ in $\nabla^-$ such  that $N\subseteq K$.}

   This is immediate if $\nabla^-$ is a complete graph. So we may assume that $\nabla^-$ is a $k$-tree that is not complete. Choose $x\in N$ with $\varpi_{\nabla^-}^{-1}(x)\in [h-1]$ as small as possible. Let $M$ be the set of forward neighbors of $x$ in $\nabla^-$ . Then $M\cup \{x\}$ is a clique of cardinality $k+1$ in $\nabla^-$ which contains the clique $N$ of cardinality at most $k$. This proves \eqref{st:maximalclique}.

   \medskip

   Let $K$ be as in \eqref{st:maximalclique}. Fix an enumeration $\{y_i:i\in [k-|N|]\}$ of the elements of $K\setminus N$. We define $\nabla$ as follows. Let
$$V(\nabla)=V(\nabla^-)\cup \{x_i:i\in \{0,\ldots, k-|N|\}\};$$
      $$E(\nabla)=E(\nabla^-)\cup \left(\bigcup_{i=0}^{k-|N|}(\{x_iy, x_iy_j, x_ix_{j'}:  y\in N, j\in [i], j'\in [k-|N|]\setminus [i]\})\right).$$
Let $\varpi_{\nabla}(x_i)=i+1$ for all $i\in \{0,\ldots, k-|N|\}$ and
let    $\varpi_{\nabla}(z)=\varpi_{\nabla^-}(v)+k-|N|+1$ for all $v\in V(\nabla^-)$ (see Figure~\ref{fig:k-tree}). One may check that $\nabla$ is $k$-tree and $H$ is an induced subgraph of $\nabla$; we omit the details.
\end{proof}

For every $t\geq 1$, we denote by $\mathcal{E}_t$ the class of all $K_t$-free graphs in $\mathcal{E}$. In view of Theorem~\ref{thm:k-tree}, Theorems~\ref{mainchordal} and \ref{thm:mainforest+} follow, respectively, from Theorems~\ref{thm:main2-tree} and \ref{thm:tree+} below. We will prove both results in the last section.

\begin{theorem}\label{thm:main2-tree} For every $2$-tree $\nabla$, there exists an integer $\Upsilon=\Upsilon(\nabla)\geq 1$ such that every graph $G\in \mathcal{E}_4$ with $\tw(G)>\Upsilon$ contains $\nabla$.
\end{theorem}
\begin{theorem}\label{thm:tree+}
    For every integer $t\geq 1$ and every tree $T$, there exists an integer $\Gamma=\Gamma(t,T)\geq 1$ such that every $G\in \mathcal{E}_t$ with $\tw(G)>\Gamma$ contains $\cone(T)$.
\end{theorem}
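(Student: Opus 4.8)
The plan is to prove Theorem~\ref{thm:tree+} by induction on $t$, the inductive step resting on a local-structure dichotomy for the class $\mathcal E$ --- the technical heart of the paper, and the step I expect to be the real obstacle. Fix a tree $T$. If $t\le 2$, every graph in $\mathcal E_t$ is edgeless, so has treewidth at most $1$ and the statement is vacuous; assume $t\ge 3$ and that the theorem holds for $t-1$, with threshold $w_0:=\Gamma(t-1,T)$. Since $\mathcal E$ is hereditary and a vertex-neighbourhood in a $K_t$-free graph is $K_{t-1}$-free, we have $G[N(v)]\in\mathcal E_{t-1}$ for every $G\in\mathcal E_t$ and every $v\in V(G)$, and $v$ is complete to $N(v)$. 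Hence if $G$ has a vertex $v$ with $\tw(G[N(v)])>w_0$, the inductive hypothesis produces $\cone(T)$ inside $G[N(v)]$, and so inside $G$, as required.

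\textbf{Why this is not enough, and the local structure theorem.} The trouble is that such a vertex $v$ need not exist: $\mathcal E$ contains graphs of arbitrarily large treewidth all of whose vertex-neighbourhoods (indeed, all common neighbourhoods of cliques) have bounded treewidth --- the layered wheels of Theorem~\ref{layeredwheel1}, being $K_4$-free, have their neighbourhoods in $\mathcal E_3$, which turns out to have bounded treewidth. The way around this --- and this is where one must use that $G$ is $C_4$-free and even-wheel-free, not merely theta- and prism-free (for the latter, Theorem~\ref{tw8} already finds every forest inside $G$, whereas a cone of a tree is not a forest) --- is a dichotomy: \emph{for every $w$ there is $W$ such that every $G\in\mathcal E$ with $\tw(G)>W$ either has a vertex $v$ with $\tw(G[N(v)])>w$, or else contains, as an induced subgraph, a member of a prescribed family $\mathcal F$ of ``elementary'' large-treewidth graphs in $\mathcal E$ (layered-wheel-type configurations), every member of which visibly contains $\cone(T)$ for each fixed tree $T$.} Granting this, the induction closes: in the first alternative we recurse as above, and in the second we read $\cone(T)$ off the elementary configuration. (When $t=3$ the first alternative is impossible, since $\mathcal E_2$ is edgeless; so the dichotomy in particular forces $\mathcal E_3$ to have bounded treewidth, the members of $\mathcal F$ all containing triangles --- and then the theorem is vacuous for $t=3$ if $T$ has an edge, and trivial if $T$ is a single vertex.)

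\textbf{The main obstacle.} Proving the dichotomy is the crux. The approach I would take: (i) reduce, via a balanced-separator/central-bag argument of the kind developed earlier in this series, to the case where $G$ is well-linked of large treewidth; (ii) extract a large subdivided-wall subgraph from Theorem~\ref{wallminor} and clean it using $C_4$- and even-wheel-freeness to pin down its chords --- since $G$ is theta- and prism-free the outcome cannot be an induced wall, so instead one is forced to produce a large ``wheel'' (a hole with a centre adjacent to many of its vertices) and, iterating inside neighbourhoods, a bounded-depth layered-wheel-type configuration; (iii) analyse the resulting configuration to either locate a vertex whose neighbourhood still carries large treewidth, or recognise the configuration itself as a member of $\mathcal F$ and exhibit $\cone(T)$ inside it (using that, in such configurations, the centre of a level sees long induced paths coming from hole-segments of the adjacent levels, which can be assembled into any prescribed tree). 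The delicate point, and the real difficulty, is step (ii)--(iii): squeezing the global, unbounded quantity ``treewidth'' into a local statement about a single neighbourhood (or into one of finitely many named configurations), which is exactly what the extra $C_4$- and even-wheel-free hypotheses are there to make possible.
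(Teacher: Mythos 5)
Your proposal is not a proof but a plan whose entire difficulty is deferred to an unproven ``local structure dichotomy'': for every $w$ there is $W$ such that every $G\in\mathcal E$ of treewidth more than $W$ either has a vertex $v$ with $\tw(G[N(v)])>w$, or contains an induced subgraph from a prescribed family $\mathcal F$ of configurations each of which visibly contains $\cone(T)$ for every tree $T$. You state this, acknowledge it is ``the crux,'' and then sketch steps (i)--(iii) only at the level of ``extract a wall, clean it, recognise a layered-wheel-type configuration.'' Nothing in (ii)--(iii) is actually argued: no definition of $\mathcal F$ is given, no mechanism is offered for producing a large-treewidth neighbourhood from the absence of an $\mathcal F$-member, and no concrete reason is given why the configurations in $\mathcal F$ would contain $\cone(T)$ for arbitrary $T$ (which is essentially the full strength of the theorem you are trying to prove). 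The $t=3$ base case is likewise not self-contained --- you deduce ``$\mathcal E_3$ has bounded treewidth'' as a corollary of the same unproven dichotomy, whereas this is itself a nontrivial theorem about (triangle, even hole)-free graphs. So the gap is the theorem itself, restated.

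For contrast, the paper does not prove, use, or even need such a dichotomy, nor does it induct on $t$. It works directly with a ``kaleidoscope'' (a three-vertex path $x\dd a\dd y$ together with many internally disjoint $x$--$y$ paths avoiding $a$), shows via Theorem~\ref{banana} and Theorem~\ref{thm:difftodiff} that a large kaleidoscope $3$-mirrors an adjacent pair $\{z_1,z_2\}$ (Theorem~\ref{thm:kaleidoscopeexists}), and then repeatedly extends the mirrored set by a common neighbour of an adjacent pair, each time re-extracting a sub-kaleidoscope (Lemma~\ref{lem:mainblurryinduct}, Theorem~\ref{thm:mainblurry}). The $C_4$-freeness and even-wheel-freeness are used in a very specific place --- Theorem~\ref{thm:<3}, which shows a certain contraction of $G$ stays in $\mathcal E$, enabling an application of Theorem~\ref{banana} --- not to ``pin down chords of a wall.'' The output is a blurry copy of an arbitrary $2$-tree, i.e.\ a $2$-tree as a subgraph, and Theorem~\ref{thm:tree+} is then read off via the Kierstead--Penrice theorem applied inside the common neighbourhood of the apex. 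If you want to pursue your dichotomy route, you would need to supply a genuine proof of the dichotomy (including defining $\mathcal F$ and proving both alternatives), and separately prove that $\mathcal E_3$ has bounded treewidth; at present neither is done, and I do not see how to salvage the outline into an argument.
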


\subsection{Outline}\label{subs:outline} We conclude the introduction with a rough account of our main ideas. Due to the fact that adding universal vertices to a tree results in a $2$-tree, the proofs of Theorems~\ref{thm:main2-tree} and \ref{thm:tree+} follow a similar outline, which we give below.

Let $G\in \mathcal{E}_t$ be a graph of huge treewidth. We will show that $G$ contains a copy of a given $2$-tree $\nabla$ as a subgraph, not necessarily induced, but close enough so that the outcomes of Theorems~\ref{thm:main2-tree} and \ref{thm:tree+} would follow immediately from known results. Our method is to grow this copy of $\nabla$ in $G$ through an inductive process,  adding one vertex at a time with respect to the ordering imposed on $V(\nabla)$ by $\varpi_{\nabla}$, reversed (so $\varpi_{\nabla}(1)$ is the last vertex to be added).

In order to grow our $2$-tree, we use an auxiliary structure that we call a ``kaleidoscope,'' consisting of many holes sharing a three-vertex path and otherwise vertex-disjoint. We then define a notion of ``mirroring,'' whereby, roughly speaking, a set of vertices is ``$d$-mirrored'' by a kaleidoscope if every vertex in the set has at least $d$ neighbors in each of the kaleidoscope's holes, outside of the shared three-vertex path.

The main ingredients to our argument are then as follows:

\begin{itemize}
\item In Section~\ref{sec:gbu} (and in particular in Theorem~\ref{thm:difftodiff}), we show that given an integer $d$, if a vertex is $1$-mirrored by a suitably large kaleidoscope, then it is in fact $d$-mirrored by a ``sub-kaleidoscope'' of prescribed size. In particular, when $d \geq 3$, it is a wheel center for all of the holes in the sub-kaleidoscope.

\item To begin the growing process, we show in Theorem~\ref{thm:kaleidoscopeexists} that, assuming our graph has large treewidth (or more specifically, that it has two vertices connected by many disjoint paths), we can produce, using the previous bullet point and results from an earlier paper in the series, a clique of size two which is $3$-mirrored by a large kaleidoscope. In general, our induction hypothesis will be that we can find a large kaleidoscope $3$-mirroring the $2$-tree we have constructed so far.

\item Given two adjacent wheel centers with the same rim, Theorem~\ref{thm:evenwheeltheta} provides some constraints on their neighborhoods along the rim. This allows us to find common neighbors of adjacent vertices in our $2$-tree on each of the holes of the large kaleidoscope $3$-mirroring it. These common neighbours are candidates for extending our $2$-tree.

\item Theorem~\ref{thm:mainblurry} is the core of our induction step. In it, we show that, by choosing the candidate carefully, we are able to guarantee that it has three neighbors on many of the other holes of the kaleidoscope. In other words, we are able to find a large sub-kaleidoscope $3$-mirroring the extended $2$-tree, thus maintaining the property we need for the induction.

\item Finding that right candidate requires a novel idea, presented in Section~\ref{sec:<3}, which is totally different from the material developed in the earlier papers in this series: we show that taking specific minors of $G$ keeps us in the class $\mathcal{E}$. This enables us to essentially ``pretend'' certain edges in $G$ do not exist, which in turn allows us to use the machinery from an earlier paper in order to find the right candidate for extending the $2$-tree.
\end{itemize}

 In Section~\ref{sec:defns}, we cover the terminology and the results from earlier papers in this series to be used in subsequent sections. We complete the proofs of Theorems~\ref{thm:main2-tree} and \ref{thm:tree+} in Section~\ref{sec:end}. 

\section{Preliminaries}
\label{sec:defns}
In this section, we set up our notation and terminology. We also  mention a few results from the earlier papers in this series \cite{twviii,twvii}.

 Let $G = (V(G),E(G))$ be a graph and let $x\in V(G)$. We denote by $N_G(x)$ the set of all neighbors of $x$ in $G$, and write $N_G[x]=N_G(x)\cup \{x\}$. For an induced subgraph $H$ of $G$ (not necessarily containing $x$), we define $N_H(x)=N_G(x) \cap H$ and $N_H[x]=N_H(x)\cup \{x\}$. Also, for $X\subseteq G$, we denote by $N_G(X)$ the set of all vertices in $G\setminus X$ with at least one neighbor in $X$, and define $N_G[X]=N_G(X)\cup X$. Let $X,Y \subseteq G$ be disjoint. We say $X$ is \textit{complete} to $Y$ if every vertex in $X$ is adjacent to every vertex in $Y$ in $G$, and $X$ is \emph{anticomplete}
to $Y$ if there is no edge in $G$ with an end in $X$ and an end in $Y$.

A {\em path in $G$} is an induced subgraph of $G$ that is a path. If $P$ is a path in $G$, we write $P = p_1 \dd \cdots \dd p_k$ meaning $V(P) = \{p_1, \dots, p_k\}$ and $p_i$ is adjacent to $p_j$ if and only if $|i-j| = 1$. We call the vertices $p_1$ and $p_k$ the \emph{ends of $P$}, and say that $P$ is \emph{from $p_1$ to $p_k$}. The \emph{interior of $P$}, denoted by $P^*$, is the set $P \setminus \{p_1, p_k\}$. The \emph{length} of a path is its number of edges (so a path of length at most one has empty interior). Similarly, if $C$ is a cycle, we write $C = c_1 \dd \cdots \dd c_k\dd c_1$ to mean that $V(C) = \{c_1, \dots, c_k\}$ and $c_i$ is adjacent to $c_j$ if and only if $|i-j|\in \{1,k-1\}$. The \textit{length} of a cycle is its number of vertices (or edges).

A {\em theta} is a graph $\Theta$ consisting of two non-adjacent vertices $a, b$, called the \textit{ends of $\Theta$}, and three pairwise internally disjoint paths $P_1, P_2, P_3$ from $a$ to $b$ of length at least two, called the \textit{paths of $\Theta$}, such that $P_1^*, P_2^*, P_3^*$ are pairwise anticomplete to each other. For a graph $G$, by a \textit{theta in $G$} we mean an induced subgraph of $G$ which is a theta.

A {\em prism} is a graph $\Pi$ consisting of two disjoint triangles $\{a_1,a_2,a_3\} $ and $\{b_1,b_2,b_3\}$, called the \textit{triangles of $\Pi$}, and three pairwise disjoint paths $P_1,P_2,P_3$ called the \textit{paths of $\Pi$}, where $P_i$ has ends $a_i,b_i$
for each $i\in \{1,2,3\}$, and for distinct $i,j\in \{1,2,3\}$, $a_ia_j$ and $b_ib_j$ are the only edges between $P_i$ and $P_j$. For a graph $G$, by a \textit{prism in $G$} we mean an induced subgraph of $G$ which is a prism.

A {\em wheel} in a graph $G$ is a pair $(C,v)$ where $C$ is a hole in $G$ and $v\in G\setminus C$ is a vertex with at least three neighbors in $C$.  An \textit{even wheel} in $G$ is a wheel $(C,v)$ in $G$ where $v$ has an even number of neighbors in $H$. We say $G$ is \textit{even-wheel-free} if there is no even wheel in $G$. 

See Figure~\ref{fig:forbidden_isgs} for a depiction of a theta, a prism, a pyramid and an even wheel. It is straightforward to check that the class $\mathcal{E}$ of all ($C_4$, theta, prism, even wheel)-free graphs contains the class of all even-hole-free graphs.

\begin{figure}[t!]
\centering
\includegraphics[scale=0.8]{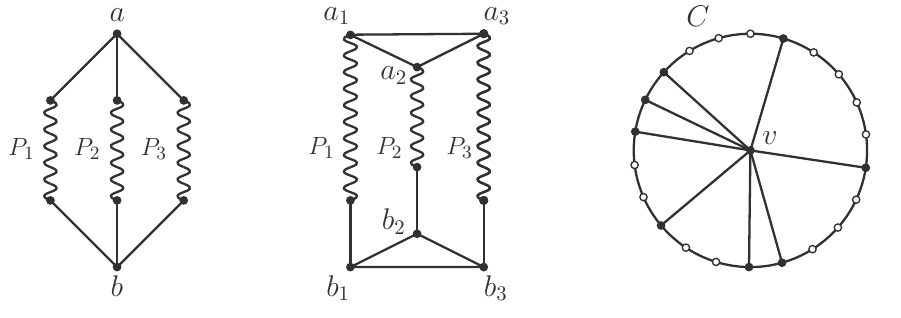}
\caption{From left to right, a theta, a prism and an even wheel. Squiggly lines represent paths of arbitrary length (possibly zero).}
\label{fig:forbidden_isgs}
\end{figure}

We now mention a few results from two previous papers in the current series \cite{twvii, twviii}. Let $k$ be a positive integer and let $G$ be a graph. A \textit{strong $k$-block} in $G$ is a set $B$ of at least $k$ vertices in $G$ such that for every $2$-subset $\{x,y\}$ of $B$, there exists a collection $\mathcal{P}_{\{x,y\}}$ of at least $k$ distinct and pairwise internally disjoint paths in $G$ from $x$ to $y$, where for every two distinct $2$-subsets $\{x,y\}, \{x',y'\}\subseteq B$ of $G$, and every choice of paths $P\in \mathcal{P}_{\{x,y\}}$ and $P'\in \mathcal{P}_{\{x',y'\}}$, we have $P\cap P'=\{x,y\}\cap \{x',y'\}$. In \cite{twvii},  it is proved that:

\begin{theorem}[Abrishami, Alecu, Chudnovsky, Hajebi and Spirkl \cite{twvii}]\label{noblocksmalltw_wall}
  For all integers $k,t\geq 1$, every graph with no  strong $k$-block and no induced subgraph that is a $t$-basic obstruction has bounded treewidth.
\end{theorem}
As discussed in the introduction, for every $t \geq 1$, a (theta, prism)-free graph has no induced subgraph that is a $t$-basic obstruction if and only if it is $K_t$-free. So the following is
immediate from Theorem~\ref{noblocksmalltw_wall}:

\begin{corollary}\label{cor:noblocksmalltw_Ct}
For all integers $k,t\geq 1$, there exists an integer $\beta=\beta(k,t)$ such that every (theta, prism, $K_t$)-free graph with no strong $k$-block has treewidth at most $\beta(k,t)$. \end{corollary}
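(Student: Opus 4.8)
The plan is to derive this directly from Theorem~\ref{noblocksmalltw_wall}. Given $k,t\ge 1$, let $\mathcal{G}$ be the class of all graphs with no strong $k$-block. By Theorem~\ref{noblocksmalltw_wall}, $\mathcal{G}$ is clean, so there is an integer $w=w(k,t)\ge 1$ such that every $t$-clean graph in $\mathcal{G}$ has treewidth at most $w$. I would then set $\beta(k,t):=w(k,t)$, and it would remain only to check that every (theta, prism, $K_t$)-free graph $G$ with no strong $k$-block is $t$-clean; for such a $G$ lies in $\mathcal{G}$ and so $\tw(G)\le w=\beta(k,t)$, as wanted.

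To verify that a (theta, prism, $K_t$)-free graph $G$ is $t$-clean, I would check that $G$ contains no $t$-basic obstruction. By hypothesis $G$ has no $K_t$. If $t\le 2$, then a $K_t$-free graph has no edges (and no vertices when $t=1$), hence treewidth at most $1$, and there is nothing further to prove; so assume $t\ge 3$. Then $K_{t,t}$ and every subdivision of $W_{t\times t}$ contain a theta, while every line-graph of a subdivision of $W_{t\times t}$ contains a prism — these are precisely the observations recorded just before Figure~\ref{fig:3basicwiththeta}. (Concretely, in $K_{t,t}$ with $t\ge 3$, two vertices of one side together with three vertices of the other induce a theta; a theta in $W_{3\times 3}$, and a prism in its line-graph, are displayed in Figure~\ref{fig:3basicwiththeta}; and subdividing edges preserves both the property of being a theta and that of being a prism.) Hence a (theta, prism)-free graph can contain a $t$-basic obstruction only if it contains $K_t$, so since $G$ does not, $G$ is $t$-clean.

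The whole substance of the corollary is carried by Theorem~\ref{noblocksmalltw_wall}, quoted from \cite{twvii}; what remains, sketched above, is merely the elementary observation — already made in the introduction — that inside the class of (theta, prism)-free graphs the notions ``$t$-clean'' and ``$K_t$-free'' coincide. I therefore do not anticipate any real obstacle here: the corollary is just the specialization of the cleanness theorem to a setting where the complete graph is the only relevant basic obstruction.
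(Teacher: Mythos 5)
Your proposal is correct and follows exactly the paper's route: the paper declares the corollary ``immediate'' from Theorem~\ref{noblocksmalltw_wall}, citing the observation in Subsection~\ref{subs:background} that for (theta, prism)-free graphs, being $t$-clean and being $K_t$-free coincide, and you spell out precisely that observation (together with the harmless special handling of $t\le 2$, which the paper's definition of treewidth already takes care of since $\tw\ge 1$ by convention).
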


Moreover, Theorem~\ref{banana} below reveals further information about the adjacency between different paths joining two vertices in a strong block. This was a major ingredient in our proof of Theorem~\ref{mainchordal} in \cite{twviii}.

\begin{theorem}[Abrishami, Alecu, Chudnovsky, Hajebi and Spirkl \cite{twviii}]\label{banana}
    For all integers $t,\nu\geq 1$, there exists an integer $\psi=\psi(t,\nu)\geq 1$ with the following property. Let $G$ be a (theta, prism, $K_t$)-free graph, let $a,b\in V(G)$ be distinct and non-adjacent and let $\mathcal{P}$ be a collection of pairwise internally disjoint paths in $G$ from $a$ to $b$ with $|\mathcal{P}|\geq \psi$. For each $P\in \mathcal{P}$, let $x_{P}$ be the neighbor of $a$ in $P$ (so $x_P\neq b$). Then there exist $P_1,\ldots, P_{\nu}\in \mathcal{P}$ such that:
    \begin{itemize}
    \item $\{x_{P_1},\ldots, x_{P_{\nu}},b\}$ is a stable set in $G$; and
    \item for all $i,j\in [\nu]$ with $i<j$, $x_{P_i}$ has a neighbor in $P_j^*\setminus \{x_{P_j}\}$.
    \end{itemize}
\end{theorem}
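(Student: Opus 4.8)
The plan is to strip away local obstructions with Ramsey-type arguments and reduce everything to a single structural claim. \emph{First reduction: all paths in $\mathcal P$ have length at least three.} A path of length one would give $ab\in E(G)$, which is forbidden, and since paths in $G$ are induced, $P\in\mathcal P$ has length two if and only if $x_P\sim b$ (otherwise $x_Pb$ is a chord of $P$). Three pairwise non-adjacent common neighbours of $a$ and $b$ would, together with $a$ and $b$, form a theta; hence $G[N_G(a)\cap N_G(b)]$ has no stable set of size three, and as it also has no $K_t$, Ramsey's theorem bounds $|N_G(a)\cap N_G(b)|$ — hence the number of length-two paths in $\mathcal P$ — by a constant depending only on $t$, and we discard those. \emph{Second reduction: the $x_P$ form a stable set.} The vertices $x_P$ ($P\in\mathcal P$) are pairwise distinct and lie in $N_G(a)$, so they induce a $K_t$-free graph; by Ramsey we may pass to a still-large subcollection in which $\{x_P:P\in\mathcal P\}$ is stable. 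After these two reductions every $x_P$ is non-adjacent to $b$ and to every other $x_Q$, so $\{x_P:P\in\mathcal P\}\cup\{b\}$ is stable for every subcollection, and only the ``staircase'' condition — that the chosen paths can be ordered $P_1,\dots,P_\nu$ so that $x_{P_i}$ has a neighbour in $P_j^*\setminus\{x_{P_j}\}$ for all $i<j$ — remains to be secured.

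For distinct $P,Q\in\mathcal P$, call the unordered pair \emph{blocking} if $x_P$ has no neighbour in $Q^*\setminus\{x_Q\}$ and $x_Q$ has no neighbour in $P^*\setminus\{x_P\}$, and otherwise orient the pair by an arc from $P$ to $Q$ whenever $x_P$ has a neighbour in $Q^*\setminus\{x_Q\}$ (choosing one such arc if both directions qualify). I would prove that there is a constant $s=s(t)$ such that no $s$ members of $\mathcal P$ are pairwise blocking. Granting this, Ramsey's theorem applied to the blocking graph produces, from a large enough initial $\mathcal P$, a subcollection $\mathcal Q$ of size $2^{\nu}$ with no blocking pair at all; the orientation above restricts to a tournament on $\mathcal Q$, and every tournament on $2^{\nu}$ vertices contains a transitive sub-tournament on $\nu$ vertices. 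Listing those vertices as $P_1,\dots,P_\nu$ in the transitive order makes all arcs point forward, which is exactly the staircase condition. Unwinding the successive Ramsey bounds produces an admissible value of $\psi(t,\nu)$.

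The main obstacle is therefore the claim that no $s(t)$ members of $\mathcal P$ are pairwise blocking. Here is the shape of the argument I would use. Suppose $Q_1,\dots,Q_s$ are pairwise blocking. For a blocking pair $\{Q_i,Q_j\}$, the interiors of $Q_i$ and $Q_j$ (regarded as $a$--$b$ paths) are anticomplete precisely when $Q_i^*\setminus\{x_{Q_i}\}$ is anticomplete to $Q_j^*\setminus\{x_{Q_j}\}$, since blockingness already forbids crossing edges at the first interior vertices. As three pairwise interior-anticomplete $a$--$b$ paths of length at least three form a theta, a further Ramsey reduction lets us assume $Q_i^*\setminus\{x_{Q_i}\}$ and $Q_j^*\setminus\{x_{Q_j}\}$ are joined by an edge for all $i\neq j$. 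One then has many paths that cross pairwise but attach to $a$ only at pairwise non-adjacent vertices $x_{Q_i}$ with no other attachment to the rest. When each $Q_i$ has length three this already gives a contradiction: pairwise crossing forces the penultimate vertices to form a clique, contradicting $K_t$-freeness once $s\ge R(t,3)$. For longer paths I would, for each pair, select a crossing edge that is as close to $a$ as possible on both paths and analyse the resulting configuration — now invoking prism-freeness as well — to extract an induced theta, prism, or $K_t$ once $s$ exceeds a bound depending only on $t$. Making that last case analysis yield a genuinely \emph{induced} obstruction (the correct extremal choice of the crossing edges) is the delicate step, and is where most of the work lies.
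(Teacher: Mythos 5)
Your framework---discard the length-two paths, stabilise the $x_P$'s by Ramsey, apply Ramsey to the blocking graph, and extract a transitive subtournament---is correct, and it cleanly reduces the general $\nu$ to the single claim that no $s(t)$ members of the reduced collection can be pairwise blocking. But observe that this claim is exactly the $\nu=2$ case of the theorem in disguise: applying the theorem with $\nu=2$ to a pairwise-blocking family immediately yields a contradiction, and conversely your reductions turn a bound on pairwise-blocking families back into the full statement. So the claim is not an auxiliary lemma you can hope to dispatch quickly; it \emph{is} the theorem.

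You establish the claim only when every path has length exactly three, where the crossing vertices all lie in $N_G(b)$ and force a clique through $b$. For longer paths the sketch---``select a crossing edge as close to $a$ as possible on both paths, then extract an induced theta, prism or $K_t$''---is precisely the hard step and is far from routine: the extremal crossings for distinct pairs sit at incomparable depths along each path, the ``closest on both'' choice is inherently asymmetric and must be controlled carefully to keep the resulting $a$-to-$b$ and $a$-to-vertex connections induced, and the most natural configurations one produces this way are pyramids, which the class $(\text{theta, prism, }K_t)$-free does \emph{not} exclude, so you cannot simply point at the crossing pattern and declare a forbidden subgraph. You flag this as the delicate step yourself, and rightly so; without it the proposal cleans up the easy reductions but leaves the substance of the result unproved. (The statement is quoted from \cite{twviii} rather than proved in the present paper, so there is no paper proof here to compare against, but the gap you leave is not a detail---it is the heart of the cited theorem.)
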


We also need a quantified version of Ramsey's classical theorem, which has appeared in several references; see, for instance, \cite{eta}.
\begin{theorem}[Ramsey \cite{multiramsey}, see also \cite{eta}]\label{classicalramsey}
For all integers $c,s\geq 1$, every graph $G$ on at least $c^s$ vertices contains either a clique of cardinality $c$ or a stable set of cardinality $s$. 
\end{theorem}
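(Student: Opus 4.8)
The plan is to prove this by induction on $c+s$. The base cases are when $c=1$ or $s=1$: if $c=1$ then, since $|V(G)|\geq 1^s=1$, the graph $G$ has a vertex, which is a clique of cardinality $1=c$; and if $s=1$ then likewise $G$ has a vertex, which is a stable set of cardinality $1=s$. So from now on one may assume $c,s\geq 2$ and that the statement holds for every pair of positive integers with smaller sum.

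Let $G$ be a graph with $|V(G)|\geq c^s$, pick an arbitrary vertex $v\in V(G)$, and write $A=N_G(v)$ and $B=V(G)\setminus N_G[v]$, so that $|A|+|B|=|V(G)|-1\geq c^s-1$. The argument is powered by the elementary inequality
\[
(c-1)^s+c^{s-1}\leq c^s,
\]
which, since $c\geq 2$, is equivalent to $(c-1)^{s-1}\leq c^{s-1}$. Consequently, if both $|A|\leq (c-1)^s-1$ and $|B|\leq c^{s-1}-1$ held, then $|A|+|B|\leq (c-1)^s+c^{s-1}-2\leq c^s-2$, contradicting $|A|+|B|\geq c^s-1$. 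Hence either $|A|\geq (c-1)^s$ or $|B|\geq c^{s-1}$.

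In the first case, apply the induction hypothesis to $G[A]$ with the pair $(c-1,s)$ (whose sum is smaller): either $G[A]$ contains a stable set of cardinality $s$ and we are done, or it contains a clique of cardinality $c-1$, which together with $v$ (adjacent to every vertex of $A$) yields a clique of cardinality $c$ in $G$. In the second case, apply the induction hypothesis to $G[B]$ with the pair $(c,s-1)$: either $G[B]$ contains a clique of cardinality $c$ and we are done, or it contains a stable set of cardinality $s-1$, which together with $v$ (non-adjacent to every vertex of $B$) yields a stable set of cardinality $s$ in $G$. This completes the induction.

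Since the argument is short and entirely self-contained, there is no real obstacle here; the only point that needs a little care is choosing the neighbor/non-neighbor split precisely so that the numeric inequality $(c-1)^s+c^{s-1}\leq c^s$ is exactly what drives the recursion, thereby delivering the sharp bound $c^s$ rather than a weaker exponential one. Alternatively, one could simply cite this as a standard quantified form of Ramsey's theorem, e.g.\ from \cite{eta}.
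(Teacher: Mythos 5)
Your proof is correct. A small point worth noting: the paper does not prove this statement but only cites it (attributed to Ramsey, with a reference to \cite{eta} for the quantified form), so there is no internal proof to compare against. Your inductive argument is the standard one, and the arithmetic checks out: the split into $A=N_G(v)$ and $B=V(G)\setminus N_G[v]$, the pigeonhole step, and the inequality $(c-1)^s+c^{s-1}\leq c^s$ (equivalent, after dividing by $c-1>0$, to $(c-1)^{s-1}\leq c^{s-1}$) all hold, and the two branches of the induction apply cleanly with pairs of strictly smaller sum. One tiny quibble of phrasing: $c^s$ is not the sharpest bound known for Ramsey numbers (the classical bound $\binom{c+s-2}{c-1}$ is smaller), so ``sharp'' should probably read ``adequate'' or ``clean''; this does not affect correctness or the paper's use of the lemma.
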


Finally, we include the following well-known lemma which follows directly from Theorem~\ref{classicalramsey} combined with Lemma 2 in \cite{lozin}.

\begin{lemma}\label{ramsey2}
For all integers $q,r,s,t\geq 1$ there exists an integer $o=o(q,r,s,t)\geq 1$ with the following property.  Let $G$ be a $(K_{s,s},K_t)$-free graph. Let $\mathcal{X}$ be a collection of pairwise disjoint subsets of $V(G)$, each of cardinality at most $r$, with $|\mathcal{X}|\geq o$. Then there are $q$ distinct sets $X_1,\ldots, X_q\in \mathcal{X}$ which are pairwise anticomplete in $G$.

\end{lemma}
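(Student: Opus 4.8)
The plan is to pass to an auxiliary graph on the index set of $\mathcal{X}$ and apply Ramsey's theorem, using the cited result of Lozin to forbid large cliques in that auxiliary graph. Concretely, I would define the graph $\mathcal{A}$ with vertex set $\mathcal{X}$ in which two distinct members $X,Y\in\mathcal{X}$ are adjacent precisely when $G$ has an edge with one end in $X$ and the other in $Y$. A stable set of size $q$ in $\mathcal{A}$ is exactly a collection of $q$ pairwise anticomplete members of $\mathcal{X}$, which is the desired outcome; so it suffices to rule out the other alternative provided by Ramsey's theorem, namely a large clique in $\mathcal{A}$.

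The substantive input is Lemma~2 of \cite{lozin}, which I would quote in the following shape: for all integers $r,m\geq 1$ there is an integer $c=c(r,m)$ such that, in any graph $\Gamma$, if there are $c$ pairwise disjoint subsets of $V(\Gamma)$, each of cardinality at most $r$, which are pairwise joined by at least one edge, then $\Gamma$ contains $K_m$ or $K_{m,m}$. Setting $m:=\max\{s,t\}$, letting $c:=c(r,m)$ be the resulting constant, and putting $o=o(q,r,s,t):=c^{q}$, we now suppose $|\mathcal{X}|\geq o$. Applying Theorem~\ref{classicalramsey} to $\mathcal{A}$ — which has at least $c^{q}$ vertices — we obtain either a clique of size $c$ or a stable set of size $q$ in $\mathcal{A}$. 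In the first case, the corresponding $c$ members of $\mathcal{X}$ are pairwise disjoint, each of size at most $r$, and pairwise joined by an edge of $G$, so by Lemma~2 of \cite{lozin} the graph $G$ contains $K_m$ or $K_{m,m}$; since $m\geq t$ and $m\geq s$, this forces a $K_t$ or a $K_{s,s}$ in $G$, contradicting the hypothesis. Hence the second case holds, yielding the required $q$ pairwise anticomplete members $X_1,\dots,X_q$ of $\mathcal{X}$.

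Thus no real obstacle remains once the two cited results are invoked: the whole proof is the Ramsey bookkeeping above. If one wanted the argument self-contained, the only non-trivial point would be reproving Lemma~2 of \cite{lozin} — this is itself a short double use of Ramsey's theorem (for each touching pair of blobs fix a representative edge, use pigeonhole on the $\leq r$ vertices of each blob to make the representatives consistent within each blob, and then apply Ramsey to the resulting complete graph on the blobs, $\leq r^{2}$-colored by the positions of the two chosen endpoints). I expect the authors simply cite it rather than reproduce this.
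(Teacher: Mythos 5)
Your proposal is correct and matches the paper's intent exactly: the paper gives no proof, remarking only that the lemma ``follows directly from Theorem~\ref{classicalramsey} combined with Lemma 2 in \cite{lozin},'' and your argument is precisely that combination (auxiliary graph on $\mathcal{X}$, Ramsey with threshold $c^q$, then Lemma~2 of \cite{lozin} with $m=\max\{s,t\}$ to rule out the clique side).
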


\section{Contraptions: class-preserving minors in $\mathcal{E}$}\label{sec:<3}

 In this section, we develop the most critical step in our procedure for growing $2$-trees by iteratively obtaining common neighbors of prescribed pairs of adjacent vertices. The main result is Theorem~\ref{thm:<3}, which shows that, although $\mathcal{E}$ is a class closed under taking induced subgraphs, certain minors of certain graphs in $\mathcal{E}$ belong to $\mathcal{E}$. Recall that a \textit{minor} of $G$ is a graph that is obtained from $G$ by a sequence of vertex deletions, edge deletions and \textit{edge contractions}, where for an edge $xy$ of a graph $G$, the \textit{$xy$-contraction} of $G$ is the graph obtained from $G$ by identifying the two vertices $x$ and $y$ into a single vertex.    
 
 For a graph $G$ and two adjacent vertices $z_1,z_2\in V(G)$, we define the \textit{$z_1z_2$-contraption} of $G$, denoted $G\tri ^{z_1}_{z_2}$, to be the graph with the following specifications:
\begin{itemize}
    \item $V(G\tri_{z_2}^{z_1})=(V(G)\setminus \{z_1,z_2\})\cup \{z\}$;
    \item $G\tri_{z_2}^{z_1}[V(G)\setminus \{z_1,z_2\}]=G\setminus \{z_1,z_2\}$; and
    \item $N_{G\tri_{z_2}^{z_1}}(z)=N_G(z_1)\cap N_G(z_2)$.
\end{itemize}

See Figure~\ref{fig:c4needed}. In other words, the $z_1z_2$-contraption of $G$ is the minor of $G$ (without parallel edges) obtained by first contracting the edge $z_1z_2$ into a new vertex $z$, and then removing every edge in the resulting graph between $z$ and a vertex in the symmetric difference of $N_G(z_1)$ and $N_G(z_2)$. Our goal in this section is to prove the following:

\begin{theorem}\label{thm:<3}
    Let $G\in \mathcal{E}$ be a graph and let $z_1,z_2\in V(G)$ be distinct and adjacent such that $N_G(z_1)\cap N_G(z_2)$ is a stable set of vertices of degree at most three in $G$. Then we have  $G\tri_{z_2}^{z_1}\in \mathcal{E}$.
\end{theorem}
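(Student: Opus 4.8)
The plan is to show that if $G' := G\tri_{z_2}^{z_1}$ contained one of the four forbidden configurations — a $C_4$, a theta, a prism, or an even wheel — then $G$ itself would contain one of them, contradicting $G\in\mathcal{E}$. Write $S := N_G(z_1)\cap N_G(z_2)$, the common neighborhood, which by hypothesis is a stable set in $G$, each of whose vertices has degree at most three in $G$. In $G'$ the new vertex $z$ has $N_{G'}(z) = S$, so $z$ is a vertex whose neighborhood is stable; in particular $z$ is not in any triangle of $G'$, which already rules out $z$ lying in a prism's triangle and constrains how $z$ can sit in the other configurations. The general strategy for ``lifting'' a forbidden induced subgraph $H'\subseteq G'$ back to $G$ is: if $z\notin V(H')$ then $H'$ is literally an induced subgraph of $G$ (since $G'\setminus z = G\setminus\{z_1,z_2\}$ as induced subgraphs) and we are done; so assume $z\in V(H')$, and try to replace $z$ by one of $z_1$, $z_2$, or by the path $z_1\dd z_2$, checking that the result is still induced in $G$. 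The key point making this work: a vertex $v\in V(H')\setminus\{z\}$ is adjacent to $z$ in $G'$ iff $v\in S$ iff $v$ is adjacent to \emph{both} $z_1$ and $z_2$ in $G$; and a vertex $v\notin S$ that appears in $H'$ might still be adjacent to $z_1$ or to $z_2$ in $G$, which is the source of all the difficulty, since such ``extra'' edges in $G$ could create chords or shortcuts.

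Here is how I would handle each case. For a $C_4$ in $G'$ through $z$: $z$ has exactly two neighbors $a,b$ on it and $a,b$ are non-adjacent, with a fourth vertex $c$ adjacent to both $a$ and $b$ but not to $z$. Since $a,b\in S$, both are adjacent to $z_1$ and $z_2$ in $G$; then $z_1,a,z_2,b$ (or $z_1,a,b$ together with $z_2$) already gives a $C_4$ or a forbidden configuration in $G$ unless $z_1z_2$-adjacency interferes — but $z_1z_2\in E(G)$, so $\{z_1,z_2,a\}$ is a triangle and $\{z_1,z_2,a,b\}$ induces a diamond-or-$K_4$; combined with $c$ we can extract a $C_4$ (e.g. $a\dd z_1\dd b\dd c\dd a$ if $c$ is non-adjacent to $z_1$, and symmetrically with $z_2$, and if $c$ is adjacent to both $z_1$ and $z_2$ then $c\in S$, contradicting $c\not\sim z$). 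For a theta or prism or wheel through $z$: here $z$ plays the role of either an internal path vertex (degree $2$ in the configuration) or a branch/center vertex. If $z$ is an internal vertex of some path $P_i$ of a theta or prism, replace the sub-path $\cdots\dd a\dd z\dd b\dd\cdots$ by $\cdots\dd a\dd z_1\dd z_2\dd b\dd\cdots$ (possible since $a,b\in S$ are adjacent to both $z_1,z_2$); one must check this stays induced, i.e. that no other vertex of the configuration is adjacent to $z_1$ or $z_2$ — this is where the degree-$\le 3$ and stable hypotheses on $S$ do the work, because a vertex of the configuration adjacent to $z_1$ but not $z_2$ cannot be in $S$, and controlling its other adjacencies uses that the neighbors of such a vertex along the configuration are few. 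If $z$ is an even-wheel center with rim $C$, then $z$ has $\ge 4$ (even) neighbors on $C$, all in $S$, hence all adjacent to both $z_1$ and $z_2$; then $(C,z_1)$ and $(C,z_2)$ are wheels in $G$, and since $z_1z_2\in E(G)$ one checks (possibly invoking that $G$ is theta/prism/$C_4$-free to control extra adjacencies) that at least one of $z_1,z_2$ is an even-wheel center on $C$ or a smaller even subwheel appears.

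The step I expect to be the main obstacle is the \emph{theta/prism case where $z$ is a branch vertex} (an end of a theta, or a triangle vertex of a prism). If $z$ is an end $a$ of a theta with the three neighbors $x_1,x_2,x_3$ (one on each path, pairwise non-adjacent), all $x_i\in S$ are adjacent to both $z_1$ and $z_2$ in $G$; now I want to build a theta in $G$ with end, say, $z_1$ and paths routed through $x_1,x_2,x_3$, but $z_2$ (and possibly other $G$-only neighbors of $z_1$ or $z_2$) threatens to create chords between the three paths. Resolving this is exactly where the hypothesis ``$S$ is stable of degree $\le 3$'' must be used decisively: each $x_i$ has degree $\le 3$ in $G$, so its neighbors are very limited (it has $z_1$, $z_2$, and at most one more neighbor, which must then be its successor on the theta path), and this rigidity should force the lifted configuration to be induced, or else exhibit an even hole / $C_4$ / even wheel directly in $G$. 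I would organize the whole proof as a single case analysis on the role of $z$ in $H'$, with the branch-vertex subcases handled by this degree-counting argument; the $C_4$ subcase is likely needed as a lemma first (as Figure~\ref{fig:c4needed} suggests), since excluding $C_4$ in $G$ is what prevents the contracted vertex $z$ from acquiring a ``false triangle'' and is used repeatedly in the other cases.
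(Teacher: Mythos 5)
Your high-level plan (lift the forbidden configuration from $G' := G\tri_{z_2}^{z_1}$ back to $G$) is the right starting point, and your handling of the $C_4$ case and the observation that $N_{G'}(z)$ is a stable set of vertices of degree at most two in $G'$ (ruling out $z$ in a triangle) match the paper. However, there is a genuine gap in the heart of the argument: the case where $z$ is an internal vertex of a path of the configuration. You propose to replace the sub-path $\cdots\dd a\dd z\dd b\dd\cdots$ by $\cdots\dd a\dd z_1\dd z_2\dd b\dd\cdots$ and then check inducedness, claiming the degree-$\le 3$ hypothesis on $S$ ``does the work.'' But that hypothesis constrains only vertices of $S = N_G(z_1)\cap N_G(z_2)$, not $z_1$ and $z_2$ themselves, which can have unbounded degree. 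A vertex $v\in V(W)\setminus\{z\}$ adjacent in $G$ to $z_1$ but not $z_2$ lies outside $S$, carries no degree bound, and is not adjacent to $z$ in $G'$, so nothing in the configuration prevents it from existing; such chords generically destroy inducedness of your replacement path, and no amount of degree-counting on $S$ repairs that. Moreover, since $a,b\in S$ are adjacent to \emph{both} $z_1$ and $z_2$, the path $a\dd z_1\dd z_2\dd b$ already has the chord $az_2$ (and $bz_1$), so the two-vertex replacement is never induced; a one-vertex replacement by $z_1$ alone is induced only when $z_1$ happens to have no other neighbor on $W$.

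The paper resolves exactly this difficulty by a different route. It first proves a structural lemma (Lemma~\ref{lem:zintheta}) isolating a path $P$ of $W$ containing $z$ in its interior and, crucially, showing that \emph{both} $z_1$ and $z_2$ must have neighbors in $W\setminus P$ --- precisely because if, say, $z_1$ were anticomplete to $W\setminus P$, then the naive replacement through $z_1$ \emph{would} produce a forbidden configuration in $G$. The remaining, harder situation (both $z_i$'s touching $W\setminus P$) is then handled in Lemmas~\ref{lem:<3notheta}, \ref{lem:<3noprism} and the proof of Theorem~\ref{thm:<3} via Theorem~\ref{thm:evenwheeltheta}, which classifies how two adjacent vertices with disjoint neighborhoods on a hole can attach to it (exactly one of them must be $C$-bad). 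Your proposal has no analogue of this lemma and no plan for the case where the replacement path is not induced, which is where essentially all the work of the proof lives. Separately, you can strengthen your own reductions: $z$ cannot be a wheel center at all, because a rim-neighbor of a hub has degree $\geq 3$ in the wheel while every vertex of $N_{G'}(z)$ has degree $\leq 2$ in $G'$; so that branch of your case analysis is vacuous rather than something to be argued via $z_1,z_2$.
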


Two remarks:  first, as far as our application of Theorem~\ref{thm:<3} is concerned, it suffices to show that $G\tri_{z_2}^{z_1}$ is (theta, prism)-free (under the same hypotheses). This is thanks to Theorem~\ref{banana} holding true for the larger class of (theta, prism)-free graphs rather than just $\mathcal{E}$. Second, the proof of Theorem~\ref{thm:<3} (and so its application in the proof of Theorem~\ref{lem:mainblurryinduct}) is the only place in this paper where we use the assumption that $G$ is $C_4$-free. Nevertheless, as unfortunate as it may appear, excluding $C_4$ is necessary even if we ask for
 $G\tri_{z_2}^{z_1}$ not to ``be'' a theta; see Figure \ref{fig:c4needed}.

\begin{figure}
\centering
\includegraphics[scale=0.7]{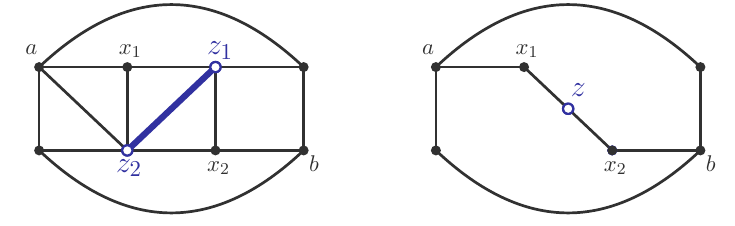}

\caption{Left: a (theta, prism, even wheel)-free graph $G$ containing a clique $\{z_1,z_2\}$ for which $N_G(z_1)\cap N_G(z_2)=\{x_1,x_2\}$ is a stable set of vertices of degree three in $G$ (observe that $G$ does contain $C_4$). Right: the graph $G\tri ^{z_1}_{z_2}$ which is a theta with ends $a,b$. }
\label{fig:c4needed}
\end{figure}

We now plunge into the proof of Theorem~\ref{thm:<3}, beginning with the following definition. Let $G$ be a graph, let $H$ be an induced subgraph of $G$ and let $v\in G\setminus H$. We say that:
\begin{enumerate}[(F1),leftmargin=15mm, rightmargin=7mm]
\item[(G)]\label{F1}  $v$ is \textit{$H$-good} if $|N_H(v)|=1$;
\item[(B)]\label{F2} $v$ is \textit{$H$-bad} if $N_H(v)$ is a clique in $H$ with at least two vertices; and
 \item[(U)]\label{F3} $v$ is \textit{$H$-ugly} if $N_H(v)$ is not a clique in $H$.
\end{enumerate}
So every vertex in $N_G(H)\subseteq G\setminus H$ is exactly one of $H$-good, $H$-bad, or $H$-ugly. The next result is an important ingredient for the proof of Theorem~\ref{thm:<3}. Similar results have also appeared in \cite{ksher, twiv}.

\begin{theorem}\label{thm:evenwheeltheta}
   Let $G$ be a (theta, prism, even wheel)-free graph, let $C$ be a hole in $G$ and let $z_1,z_2\in G\setminus C$ be distinct and adjacent, each with at least one neighbor in $C$. Assume that $z_1$ and $z_2$ have no common neighbor in $C$. Then
either both $z_1$ and $z_2$ are $C$-good and their (unique) neighbors in $C$ are distinct and adjacent, or exactly one of $z_1$ and $z_2$ is $C$-bad. Consequently, if $G\in \mathcal{E}$, then exactly one of $z_1$ and $z_2$ is $C$-bad.
\end{theorem}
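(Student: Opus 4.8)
The plan is to classify the pair $(z_1,z_2)$ by which of ``good'', ``bad'', ``ugly'' each of them is with respect to $C$, using repeatedly two elementary consequences of $G$ being (theta, even wheel)-free: \emph{(i)} if a vertex $w$ outside a hole $D$ has exactly two neighbours on $D$ and they are non-adjacent, then $D$ together with $w$ is a theta (the ends being those two neighbours, the three paths being the two arcs of $D$ between them and the length-$2$ path through $w$); and \emph{(ii)} if $w$ outside a hole $D$ has at least three neighbours on $D$, then $(D,w)$ is a wheel, which is an even wheel exactly when $w$ has an even number of neighbours on $D$. So in our $G$ neither configuration in \emph{(i)} nor an even-count instance of \emph{(ii)} can occur.

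First I would record the easy facts. A clique in a hole is an edge, so ``$z_i$ is $C$-bad'' means precisely that $z_i$ has two neighbours on $C$ and they are adjacent. If $z_i$ is $C$-ugly it has two non-adjacent neighbours on $C$; if those were its only neighbours on $C$, then \emph{(i)} applied to $C$ would give a theta, so $z_i$ has at least three neighbours on $C$, hence $(C,z_i)$ is a wheel and, by even-wheel-freeness, $|N_C(z_i)|$ is odd. Thus $|N_C(z_i)|$ is odd when $z_i$ is $C$-good or $C$-ugly, and even when $z_i$ is $C$-bad.

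Two substantive steps remain. \textbf{(a)} $z_1$ and $z_2$ are not both $C$-bad: otherwise $N_C(z_1)=\{a_1,a_2\}$ and $N_C(z_2)=\{b_1,b_2\}$ are disjoint edges of $C$, and the triangles $\{z_1,a_1,a_2\}$, $\{z_2,b_1,b_2\}$ together with the edge $z_1z_2$ and the two arcs into which deleting the edges $a_1a_2$, $b_1b_2$ splits the induced cycle $C$ form a prism (a routine check, using that $C$ is induced and that $z_1,z_2$ have no common neighbour so the arcs have length at least one). \textbf{(b)} if $z_1$ is $C$-ugly then $z_2$ is $C$-bad: assume not, so $|N_C(z_2)|$ is odd; list $N_C(z_1)=\{a_1,\dots,a_m\}$ in cyclic order on $C$ (so $m$ is odd and at least $3$) and let $A_l$ be the interior of the arc of $C$ between consecutive $a_l,a_{l+1}$. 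Since $N_C(z_1)\cap N_C(z_2)=\emptyset$, the neighbours of $z_2$ on $C$ are distributed among the sets $A_l$, so $\sum_l|N_C(z_2)\cap A_l|=|N_C(z_2)|$ is odd and some $A_l$ contains an odd --- hence positive --- number of them. Then $a_l\not\sim a_{l+1}$, the subgraph induced by $\{z_1\}$ together with the arc of $C$ from $a_l$ to $a_{l+1}$ through $A_l$ is a hole $C'$, and $N_{C'}(z_2)=\{z_1\}\cup(N_C(z_2)\cap A_l)$ (using that $z_2$ misses $a_l$ and $a_{l+1}$). So $z_2$ has an even number, at least $2$, of neighbours on $C'$, none of them adjacent to $z_1$; applying \emph{(i)} when this number is $2$ and \emph{(ii)} when it is at least $4$ produces a theta or an even wheel --- a contradiction.

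By \textbf{(a)}, \textbf{(b)} and the symmetric form of \textbf{(b)}, the only surviving type-pairs are ``(good, good)'' and those in which exactly one of $z_1,z_2$ is $C$-bad. In the (good, good) case let $a,b$ be the neighbours of $z_1,z_2$ on $C$; they are distinct as $z_1,z_2$ have no common neighbour, and if $a\not\sim b$ then $C$ together with the path $a\dd z_1\dd z_2\dd b$ is a theta, so $a\sim b$ --- the first alternative. For the final clause: if $a\sim b$ then $\{a,z_1,z_2,b\}$ induces a $C_4$ (since $a\not\sim z_2$ and $b\not\sim z_1$), which is excluded in $\mathcal{E}$, so for $G\in\mathcal{E}$ the (good, good) case cannot occur and exactly one of $z_1,z_2$ is $C$-bad. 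I expect step \textbf{(b)} to be the crux: the parity bookkeeping (odd total forces an arc with positive odd count) must be set up carefully, and one must check that $C'$ really is an induced hole on which $z_2$'s neighbourhood is exactly $\{z_1\}$ together with $N_C(z_2)\cap A_l$; step \textbf{(a)} is routine but needs the standard observation that deleting two edges from an induced cycle leaves two arcs with no edges between them except those two.
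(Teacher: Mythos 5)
Your proof is correct and follows essentially the same route as the paper's: ruling out the (bad, bad) pair via a prism, the good--good case via a theta (and a $C_4$ in $\mathcal{E}$), and the ugly case via a parity argument using the sub-hole obtained from an arc of $C$ between consecutive neighbours of $z_1$ together with $z_1$. The only cosmetic difference is that the paper shows $z_2$ has an even number of neighbours on every such arc and then sums, whereas you go directly to the one arc carrying an odd count; the underlying calculation is identical.
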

\begin{proof}
 Note that if both $z_1$ and $z_2$ are $C$-bad, then since $z_1$ and $z_2$ have no common neighbor in $C$, it follows that $C\cup \{z_1,z_2\}$ is a prism in $G$, a contradiction. So we may assume without loss of generality that $z_1$ is either $C$-good or $C$-ugly. If $z_2$ is $C$-bad, then we are done. So we can consider the case that $z_2$ is also either $C$-good or $C$-ugly; in particular, since neither $(C,z_1)$ nor $(C,z_2)$ is an even wheel in $G$, it follows that for every $i\in \{1,2\}$, $|N_C(z_i)|$ is an odd integer. Assume first that both $z_1$ and $z_2$ are $C$-good, say $N_C(z_i)=\{x_i\}$ for $i\in \{1,2\}$. Then since $z_1$ and $z_2$ have no common neighbor in $C$, and $C\cup \{z_1,z_2\}$ is not a theta in $G$, it follows that $x_1$ and $x_2$ are distinct and adjacent in $G$, as required.
 
This leaves the case where one of $z_1$ and $z_2$, say the former, is $C$-ugly. Since $z_1$ and $z_2$ have no common neighbor in $C$, it follows that $N_C(z_2)\subseteq C\setminus N_C(z_1)$. Note that every component  of $C\setminus N_C(z_1)$ is a path in $C$ (and so in $G$). Moreover, for every component  $P$ of $C\setminus N_C(z_1)$, $C_P=N_C[P]\cup \{z_1\} $ is a hole in $G$. Since $C_P\cup \{z_2\}$ is not a theta in $G$, and $(C_P,z_2)$ is not even wheel in $G$, and $z_1$ and $z_2$ have no common neighbor in $C$, it follows that $z_2$ has an even number of neighbors in $P$. In conclusion, we have shown that $z_2$ has an even number of neighbors in each component of $C\setminus N_C(z_1)$. But then $z_2$ has an even number of neighbors in $C$, a contradiction. We conclude that either both $z_1$ and $z_2$ are $C$-good and their neighbors in $C$ are distinct and adjacent, or exactly one of $z_1$ and $z_2$ is $C$-bad. In addition, if $G\in \mathcal{E}$, then the first outcome does not hold, as otherwise $G[N_C[z_1]\cup N_C[z_2]]$ is isomorphic to $C_4$, a contradiction. This completes the proof of Theorem~\ref{thm:evenwheeltheta}.
\end{proof}

We also need the following lemma.
\begin{lemma}\label{lem:zintheta}
   Let $G\in \mathcal{E}$ be a graph and let $z_1,z_2\in V(G)$ be distinct and adjacent such that $N_G(z_1)\cap N_G(z_2)$ is a stable set of vertices of degree at most three in $G$. Let $z\in V(G\tri^{z_1}_{z_2})$ be as in the definition of $G\tri^{z_1}_{z_2}$ and let $W$ be an induced subgraph of $G\tri^{z_1}_{z_2}$ which is either a theta, or a prism, or an even wheel. Then there is a path $P$ in $W$ with ends $a,b$ for which the following hold.
    \begin{enumerate}[{\rm(a)}]
        \item\label{lem:zintheta_a} We have $z\in P\setminus (N_W[a]\cup N_W[b])$, and so $W\setminus P^*\subseteq G\setminus (N_G[z_1]\cap N_G[z_2])$.
        \item\label{lem:zintheta_b} The vertices in $P^*$ (including $z$) have degree two in $W$, and $a,b$ both have degree three in $W$. 
        \item\label{lem:zintheta_c} In the graph $G$, both $z_1$ and $z_2$ have a neighbor in $W\setminus P$.
    \end{enumerate}
\end{lemma}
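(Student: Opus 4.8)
The plan is to pin down exactly where the contracted vertex $z$ lies inside $W$, after which the path $P$ is essentially forced. Write $N=N_W(z)$. First I would note that $z\in V(W)$: otherwise $V(W)\subseteq V(G)\setminus\{z_1,z_2\}$ and $G\tri^{z_1}_{z_2}[V(W)]=G[V(W)]$, so $W$ would be a theta, prism or even wheel induced in $G\in\mathcal{E}$, which is impossible. Next, straight from the definition of $G\tri^{z_1}_{z_2}$ the set $N$ is contained in $N_G(z_1)\cap N_G(z_2)$, which by hypothesis is stable, so $N$ is stable in $W$. Moreover each $v\in N$ has degree $\deg_G(v)-1\le 2$ in $G\tri^{z_1}_{z_2}$ (it loses the neighbours $z_1,z_2$ and gains $z$), hence $\deg_W(v)\le 2$, and in fact $\deg_W(v)=2$ since $W$ has minimum degree two.

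Now I would locate $z$. A triangle-vertex of a prism, the hub of an even wheel, and a spoke-endpoint of an even wheel each have a $W$-neighbour of degree at least three (another triangle-vertex, a spoke-endpoint, and the hub, respectively), so by the previous paragraph $z$ is none of these; this already forces $\deg_W(z)=2$ unless $W$ is a theta and $z$ is an end of it. I would rule out that last case by arguing in $G$: writing $N=\{x_1,x_2,x_3\}$ with $x_i$ the neighbour of $z$ on the $i$-th path $P_i$ of $W$, observe that the other end $b$ of $W$ is not in $N$, hence not a common neighbour of $z_1$ and $z_2$, so we may assume $z_1\not\sim b$; choosing for each $i$ a shortest path $R_i$ from $z_1$ to $b$ inside $G[\{z_1\}\cup(V(P_i)\setminus\{z\})]$ (such a path exists since $z_1\sim x_i$), one checks that $R_1,R_2,R_3$ are induced, pairwise internally disjoint, of length at least two, with pairwise anticomplete interiors, so that $G[V(R_1)\cup V(R_2)\cup V(R_3)]$ is a theta in $G$ --- a contradiction. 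Hence $\deg_W(z)=2$, so $z$ is an internal vertex of a path of the theta or prism, or a spokeless rim-vertex of the even wheel; and since both $W$-neighbours of $z$ also have degree two, $z$ is at distance at least two from every degree-three vertex of $W$.

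With this in hand, I would take $P$ to be the maximal path of $W$ through $z$ all of whose internal vertices have degree two. Its ends $a,b$ have degree three, which gives (b); and by the last sentence of the previous paragraph $z\in P\setminus(N_W[a]\cup N_W[b])$, which is (a). The displayed consequence in (a) is then immediate, since any $v\in W\setminus P^*$ is neither $z$ nor a $W$-neighbour of $z$, hence $v\notin N_G(z_1)\cap N_G(z_2)$, and $v\neq z_1,z_2$. For (c) I would argue by contradiction. Suppose $z_1$ has no neighbour in $W\setminus P$, so $N_G(z_1)\cap V(W)\subseteq V(P)$. The graph $G[V(P)\setminus\{z\}]$ is $P$ with the internal vertex $z$ removed, hence has $a$ and $b$ in distinct components; but $z_1$ is adjacent to both $W$-neighbours of $z$, so there is a shortest path $Q$ from $a$ to $b$ in $G[\{z_1\}\cup(V(P)\setminus\{z\})]$. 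Such a $Q$ is induced, passes through $z_1$, satisfies $V(Q)\setminus\{z_1\}\subseteq V(P)\setminus\{z\}$, and has length at least two because $a\not\sim b$ in $G$ (which holds in each of the three cases). Replacing $P$ by $Q$ inside $W$ now yields a theta, prism, or even wheel induced in $G$: the only new vertex is $z_1$, which is anticomplete to $W\setminus P$ by assumption; the rest of $V(Q)\setminus\{z_1\}$ lies in $P^*\setminus\{z\}$, which is anticomplete to $W\setminus P$ (a structural property of thetas, prisms and even wheels); the edges between $\{a,b\}$ and $W\setminus P$ are unchanged; and when $W$ is an even wheel, $P^*$ contains no spoke-endpoint, so the hub keeps its even neighbourhood on the new rim. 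This contradicts $G\in\mathcal{E}$, so $z_1$ --- and, symmetrically, $z_2$ --- has a neighbour in $W\setminus P$.

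The main obstacle, I expect, is the two places where $z_1$ (or $z_2$) may have \emph{extra} neighbours along the relevant path --- along $P_i$ in the theta-end elimination, and along $P$ in the proof of (c) --- so that a naive un-contraction need not be induced. In both, the remedy is the same small device: reroute through $z_1$ along a \emph{shortest} path, which is automatically induced and stays anticomplete to whatever the surrounding structure requires. I would organise the write-up around this device; the rest is a careful but essentially routine case analysis over the three types of $W$ and the few possible positions of $z$.
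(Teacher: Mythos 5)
Your proposal is correct, and the structure is essentially the same as the paper's: locate $z$ using the fact that $N_W(z)\subseteq N_G(z_1)\cap N_G(z_2)$ is a stable set of degree-two vertices in $W$, eliminate the theta-end possibility by un-contracting to a theta in $G$ with ends $z_j$ and the other theta-end, take $P$ to be the maximal degree-two path through $z$ for (a) and (b), and prove (c) by replacing $P$ with a rerouted induced $a$--$b$ path through $z_1$ and observing that the substitution again produces a theta/prism/even wheel in $G$. The only cosmetic differences are that the paper phrases the location of $z$ as a proof by contradiction (assume no such $P$ exists) rather than directly, and in the theta-end elimination it constructs the induced $z_j$--$z'$ paths by choosing the first $z_j$-neighbour on each $P_i$ rather than taking a shortest path; both choices serve the same purpose of guaranteeing the rerouted paths are induced.
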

\begin{proof}
    First, assume that there is no path $P$ in $W$ satisfying \ref{lem:zintheta}\ref{lem:zintheta_a} and \ref{lem:zintheta}\ref{lem:zintheta_b}. Since $G\in \mathcal{E}$, it follows that $z\in W$. Also, since $N_G(z_1)\cap N_G(z_2)$ is a stable set of vertices of degree at most three in $G$, it follows that $N_{G\tri^{z_1}_{z_2}}(z)$ is a stable set of vertices of degree at most two in $G\tri^{z_1}_{z_2}$. In particular, there is no wheel $(C,v)$ in $G\tri^{z_1}_{z_2}$ where $z\in N_C[v]$, and $z$ does not belong to a triangle of a prism in $G\tri^{z_1}_{z_2}$. Moreover, from the assumption that there is no path in $W$ satisfying \ref{lem:zintheta}\ref{lem:zintheta_a} and \ref{lem:zintheta}\ref{lem:zintheta_b}, it follows that there is no wheel $(C,v)$ in $G\tri^{z_1}_{z_2}$ where $z\in C\setminus N_C(v)$, and $z$ does not belong the interior of a path of a theta or a prism in $G\tri^{z_1}_{z_2}$. We deduce that $W$ is a theta in $G\tri^{z_1}_{z_2}$ and $z$ is an end of $W$. Let $z'\in V(G\tri^{z_1}_{z_2})\setminus N_{G\tri^{z_1}_{z_2}}[z]=V(G)\setminus (N_G[z_1]\cap N_G[z_2])$ be the other end of $W$ and let $P_1,P_2,P_3$ be the paths of $W$. Then for every $i\in [3]$, $P_i$ has ends $z,z'$, and for some $j\in \{1,2\}$, $z_j$ is not adjacent to $z'$ in $G$. On the other hand, for every $i\in [3]$, we have $N_{P_i}(z)\subseteq N_G(z_j)\cap (P_i\setminus \{z\})$. Thus, traversing $P_i\setminus \{z\}$ starting at $z'$, we may choose $x_i$ to be the first vertex in $N_G(z_j)\cap (P_i\setminus \{z\})$; it follows that $x_i\in P_i^*$. But then there is a theta in $G$ with ends $z_j,z'$ and paths $z_j\dd x_i\dd P_i\dd z'$ for $i\in [3]$, which violates $G\in \mathcal{E}$. This proves that there exists a path $P$ in $W$ with ends $a,b$ for which  \ref{lem:zintheta}\ref{lem:zintheta_a} and \ref{lem:zintheta}\ref{lem:zintheta_b} hold. 
    
    It remains to show that $P$ satisfies \ref{lem:zintheta}\ref{lem:zintheta_c}. Suppose for a contradiction, and without loss of generality,  that $z_1$ is anticomplete to $W\setminus P$ in $G$. Then $U=(P\setminus \{z\})\cup \{z_1\}$ is a  connected induced subgraph of $G$ with $a,b\in U$ such that $U\setminus \{a,b\}\subseteq P^*\cup \{z_1\}$ is anticomplete to $W\setminus P$ in $G$. Consequently, there exists a path $P_1$ in $U$ from $a$ to $b$ where $P^*_1$ is anticomplete to $W\setminus P$ in $G$. But now $(W\setminus P)\cup P_1$ is a theta, a prism or an even wheel in $G$ (depending on whether $W$ is a theta, a prism or an even wheel in $G$, respectively), which is impossible because $G\in \mathcal{E}$. This completes the proof of Lemma~\ref{lem:zintheta}.
\end{proof}
The next two lemmas, in turn, show that under the assumptions of Theorem~\ref{thm:<3}, the graph $G\tri^{z_1}_{z_2}$ is theta-free and prism-free.
\begin{lemma}\label{lem:<3notheta}
   Let $G\in \mathcal{E}$ be a graph and let $z_1,z_2\in V(G)$ be distinct and adjacent such that $N_G(z_1)\cap N_G(z_2)$ is a stable set of vertices of degree at most three in $G$. Then $G\tri^{z_1}_{z_2}$ is theta-free.
\end{lemma}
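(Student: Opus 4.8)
I want to show that $G\tri^{z_1}_{z_2}$ contains no theta. Suppose for contradiction that $W$ is a theta in $G\tri^{z_1}_{z_2}$, with ends $a,b$ and paths $Q_1,Q_2,Q_3$. If the special vertex $z$ does not belong to $W$, then $W$ is an induced subgraph of $G\setminus\{z_1,z_2\}$, hence a theta in $G$, contradicting $G\in\mathcal{E}$. So $z\in W$. Now I invoke Lemma~\ref{lem:zintheta}: it hands me a path $P$ in $W$ with ends $a',b'$ such that $z\in P^*$ (in fact $z\notin N_W[a']\cup N_W[b']$), every vertex of $P^*$ has degree two in $W$, $a',b'$ have degree three in $W$, and --- crucially --- in $G$ both $z_1$ and $z_2$ have a neighbor in $W\setminus P$. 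Since in a theta the only vertices of degree three are the two ends, we get $\{a',b'\}=\{a,b\}$, so $P$ is one of the three paths $Q_i$, say $P=Q_3$, and $z$ is an interior vertex of $Q_3$.

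**Building the theta in $G$.** Write $Q_3 = a \dd \cdots \dd u \dd z \dd v \dd \cdots \dd b$, where $u,v$ are the two neighbors of $z$ on $Q_3$; by construction $u,v\in N_G(z_1)\cap N_G(z_2)$. The rest of $W$, namely $W\setminus Q_3^{*}$, together with the two subpaths $a\dd\cdots\dd u$ and $v\dd\cdots\dd b$ of $Q_3$, lies inside $G\setminus\{z_1,z_2\}$ and is exactly an induced subgraph of $G$. I now want to reconnect $u$ to $v$ through $z_1$ or through $z_2$ to recover a theta in $G$. The natural move is: since $u,v$ are both adjacent to $z_1$ in $G$, replace the segment $u\dd z\dd v$ of $Q_3$ by $u\dd z_1\dd v$. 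The candidate object is $W' = (W\setminus\{z\})\cup\{z_1\}$ with the edge $uz$ removed / $vz$ removed and $uz_1, vz_1$ added; I must check this is an induced theta in $G$. For that I need $z_1$ to have no neighbor in $W\setminus Q_3$ in $G$, and no neighbor in $(Q_3^{*}\setminus\{z\})\setminus\{u,v\}=((a\dd\cdots\dd u)\cup(v\dd\cdots\dd b))\setminus\{u,v\}$ other than possibly at $a$ or $b$ --- but if $z_1$ were adjacent to $a$ or $b$ or to an internal vertex of those segments, I would need to argue this produces a shorter theta or violates $C_4$-freeness or even-wheel-freeness in $G$. Here is where the degree-at-most-three hypothesis enters: $u$ and $v$ are the only two vertices of $N_G(z_1)\cap N_G(z_2)$ on $Q_3$ near $z$, and $N_G(z_1)\cap N_G(z_2)$ being stable of degree $\le 3$ tightly restricts how $z_1$ (resp. $z_2$) can attach to the rest of $W$. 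Symmetrically, if $z_1$ fails --- say $z_1$ does have a bad neighbor somewhere in $W$ --- I fall back on $z_2$; Lemma~\ref{lem:zintheta}\ref{lem:zintheta_c} guarantees both $z_1$ and $z_2$ reach $W\setminus Q_3$, which is the configuration I'll play the two alternatives against.

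**The main obstacle.** The real work is the case analysis showing that (after possibly swapping the roles of $z_1$ and $z_2$) substituting $z_j$ for $z$ yields a genuine \emph{induced} theta in $G$, i.e. controlling the extra adjacencies of $z_1$ and $z_2$ to the vertices of $W$. The cleanest route is: consider the hole $C$ obtained from $Q_1\cup Q_2$ (a hole in $G\tri^{z_1}_{z_2}$, hence --- since $z\notin Q_1\cup Q_2$ --- a hole in $G$), look at how $z_1$ and $z_2$ attach to $C$, and apply Theorem~\ref{thm:evenwheeltheta} to the pair $z_1,z_2$ and the hole $C$: it tells me that exactly one of $z_1,z_2$ is $C$-bad, and in particular at least one of them, say $z_1$, has at most one neighbor on $C=Q_1\cup Q_2$. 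Combined with the fact that $z_1$'s neighbors among $N_G(z_1)\cap N_G(z_2)$ on $Q_3$ are located, and with $C_4$-freeness to forbid $z_1$ having two consecutive-type neighbors, this pins down $z_1$'s neighborhood in $W$ enough to conclude that $(W\setminus\{z\})\cup\{z_1\}$, suitably reconnected, is an induced theta in $G$ --- the final contradiction. I expect the bookkeeping of \emph{which} of $z_1,z_2$ to use, and verifying that the reconnecting path through $z_j$ has length $\ge 2$ and stays anticomplete to the other two paths, to be the fiddly part; everything else is forced.
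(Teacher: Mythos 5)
Your plan follows the paper's proof through Lemma~\ref{lem:zintheta} --- identifying the path $P=Q_3$ with $z$ in its interior, the hole $C=Q_1\cup Q_2=W\setminus Q_3^{*}$, and the application of Theorem~\ref{thm:evenwheeltheta} to $z_1,z_2$ and $C$ --- but two concrete problems break what comes next. First, you misquote Theorem~\ref{thm:evenwheeltheta}: for $G\in\mathcal{E}$ it says \emph{exactly one} of $z_1,z_2$ is $C$-bad, and it does \emph{not} say that the other ``has at most one neighbor on $C$.'' The non-$C$-bad vertex may be $C$-ugly, with an odd number $\geq 3$ of neighbors on $C$. Ruling out the $C$-ugly possibility for that vertex is itself a step in the paper's proof (via what the source labels \eqref{st:thetaproperties}) and does not come for free from Theorem~\ref{thm:evenwheeltheta} alone.

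Second, and more fundamentally, the endgame you propose --- showing that $(W\setminus\{z\})\cup\{z_j\}$, ``suitably reconnected,'' is an induced theta in $G$ --- can never succeed. Lemma~\ref{lem:zintheta}\ref{lem:zintheta_c}, which you yourself cite, guarantees that \emph{both} $z_1$ and $z_2$ have a neighbor in $W\setminus P=Q_1^{*}\cup Q_2^{*}$. Since $z\notin N_W[a]\cup N_W[b]$, the $Q_3$-neighbors $u,v$ of $z$ are distinct from $a$ and $b$; so whichever $z_j$ you splice in has at least three neighbors in $W\setminus\{z\}$ (namely $u$, $v$, and some vertex of $Q_1^{*}\cup Q_2^{*}$), which together with $a$ and $b$ gives at least three vertices of degree $\geq 3$, and a theta has exactly two. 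The paper's contradiction is of a different kind: the claim \eqref{st:oneofa,b} shows that $N_C(z_i)$ is confined to $N_{Q_j}[a]$ or $N_{Q_j}[b]$ (this is proved by building a theta in $G$ from the auxiliary paths $P_{a,i},P_{b,i}$ through $(P\setminus\{z\})\cup\{z_i\}$ together with parts of $Q_1,Q_2$ --- not by substituting $z_j$ into $W$); combining this with Theorem~\ref{thm:evenwheeltheta} forces $z_1$ to be $C$-bad with $N_C(z_1)=N_{Q_1}[a]$ and $z_2$ to be $C$-good with its unique $C$-neighbor being the middle vertex $q$ of a length-two $Q_2$, and the final contradiction is the induced $C_4$ on $\{a,q,z_1,z_2\}$, not a theta. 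Neither that key claim nor the $C_4$ endgame appears in your sketch, so what you defer as ``fiddly bookkeeping'' is where the proof actually lives, and the substitution route you set up for it is blocked.
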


\begin{proof}
Suppose for a contradiction that there is a theta $W$ in $G\tri^{z_1}_{z_2}$. Let $z\in V(G\tri^{z_1}_{z_2})$ be as in the definition of $G\tri^{z_1}_{z_2}$. Let $P$ be the path in $W$ with ends $a,b$ satisfying Lemma~\ref{lem:zintheta}. It follows from Lemma~\ref{lem:zintheta}\ref{lem:zintheta_a} and \ref{lem:zintheta_b} that $a,b$ are the ends of $W$, $P$ is a path of $W$, and we have $z\in P\setminus (N_{P}[a]\cup N_{P}[b])$. Let $Q_1,Q_2$ be the paths of $W$ distinct from $P$; so $Q_1$ and $Q_2$ both have ends $a,b$, as well. Let $C=Q_1\cup Q_2$. Then $C$ is a hole in $G\setminus \{z_1,z_2\}$ and we have $C=W\setminus P^*$. 

From the definition of $G\tri^{z_1}_{z_2}$, it follows that $W\setminus \{z\}\subseteq G\setminus \{z_1,z_2\}$ and $\{z_1,z_2\}$ is complete to $N_{P}(z)$. As a result, for every $i\in \{1,2\}$, there are two paths $P_{a,i},P_{b,i}$ in $(P\setminus \{z\})\cup \{z_i\}$ from $a$ to $z_i$ and from $b$ to $z_i$, respectively, such that $P_{a,i}\setminus \{z_i\}$ and $P_{b,i}\setminus \{z_i\}$ are disjoint and anticomplete in $G$, and both $P_{a,i}^*$ and $P^*_{b,i}$ are disjoint from and anticomplete to $C\setminus \{a,b\}$ in $G$. We claim that:

\sta{\label{st:oneofa,b}Let $i\in \{1,2\}$. Then either $a$ is adjacent to $z_i$ in $G$, or $N_C(z_i)\subseteq N_{Q_j}[b]$ for some $j\in \{1,2\}$. Similarly, either $b$ is adjacent to $z_i$ in $G$, or $N_C(z_i)\subseteq N_{Q_j}[a]$ for some $j\in \{1,2\}$.}

We only need to show that for every $i\in \{1,2\}$, either $a$ is adjacent to $z_i$ in $G$, or $N_C(z_i)\subseteq N_{Q_j}[b]$ for some $j\in \{1,2\}$. Suppose not. Then we may assume without loss of generality, that $a$ is not adjacent to $z_1$ in $G$, and there is a vertex in $Q_1^*\setminus N_{Q_1}(b)$ which is adjacent to $z_1$ in $G$. It follows that $P_{a,1}$ has length at least two, and that there is a path $R$ of length at least two in $G$ from $a$ to $z_1$ such that $R^*\subseteq Q_1^*\setminus N_{Q_1}(b)$. Also, since $P_{b,1}\cup Q_2$ is a connected induced subgraph of $G$ containing the two non-adjacent vertices $a$ and $z_1$, it follows that there exists a path $S$ of length at least two in $P_{b,1}\cup Q_2$ from $a$ to $z_1$. But then there is theta in $G$ with ends $a,z_1$ and paths $P_{a,1},R,S$, contrary to the fact that $G\in \mathcal{E}$. This proves \eqref{st:oneofa,b}.
\medskip

From \eqref{st:oneofa,b}, it follows immediately that:

  \sta{\label{st:thetaproperties} Let $i\in \{1,2\}$ such that $z_i$ has a neighbor in $C$. Then the following hold.
  \begin{itemize}
      \item If $z_i$ is $C$-good, then we have $N_C(z_i)\subseteq (N_C(a)\cap N_C(b))\cup \{a,b\}$.
      \item If $z_i$ is $C$-bad, then for some $j\in \{1,2\}$, either $N_C(z_i)=N_{Q_j}[a]$ or $N_C(z_i)=N_{Q_j}[b]$.
      \item If $z_i$ is $C$-ugly, then we have $a,b\in N_C(z_i)$.
  \end{itemize}}

Now, since $C\setminus \{a,b\}=W\setminus P$ and $W\setminus P\subseteq W\setminus P^*=C$, from Lemma~\ref{lem:zintheta}\ref{lem:zintheta_a} and the definition of $G\tri _{z_2}^{z_1}$ it follows that $z_1$ and $z_2$ have no common neighbor in $C$, and from Lemma~\ref{lem:zintheta}\ref{lem:zintheta_c}, it follows that $z_1$ and $z_2$ each have at least one neighbor in $C\setminus \{a,b\}$. Consequently, by Theorem~\ref{thm:evenwheeltheta} and without loss of generality, we may assume that $z_1$ is $C$-bad and $z_2$ is either $C$-good or $C$-ugly. It follows from the second bullet of \eqref{st:thetaproperties} that for some $j\in \{1,2\}$, we have either $N_C(z_1)=N_{Q_j}[a]$ or $N_C(z_1)=N_{Q_j}[b]$. We may exploit the symmetry between $a,b$ and between $Q_1,Q_2$, and assume that $N_{C}(z_1)=N_{Q_1}[a]$. Since $a\in V(H)$ is not a common neighbor of $z_1$ and $z_2$, we deduce from the third bullet of \eqref{st:thetaproperties} that $z_2$ is $C$-good. This, together with the first bullet of \eqref{st:thetaproperties}, the fact that $z_2$ has a neighbor in $C\setminus \{a,b\}$ and the fact that $z_1$ and $z_2$ have no common neighbor in $C$, implies that $Q_2$ has length two, say $Q_2=a\dd q\dd b$, and we have $N_C(z_2)=\{q\}$. But then $G[\{a,q,z_1,z_2
\}]$ is isomorphic to $C_4$, contrary to the fact that $G\in \mathcal{E}$. This completes the proof of Lemma~\ref{lem:<3notheta}.
\end{proof}

\begin{lemma}\label{lem:<3noprism}
   Let $G\in \mathcal{E}$ be a graph and let $z_1,z_2\in V(G)$ be distinct and adjacent such that $N_G(z_1)\cap N_G(z_2)$ is a stable set of vertices of degree at most three in $G$. Then $G\tri^{z_1}_{z_2}$ is prism-free.
\end{lemma}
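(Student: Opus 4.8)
The plan is to assume for contradiction that $G\tri^{z_1}_{z_2}$ contains a prism $W$ and produce a forbidden configuration in $G$. First I would run $W$ through Lemma~\ref{lem:zintheta}: let $z$ be as in the definition of $G\tri^{z_1}_{z_2}$ and let $P$, with ends $a,b$, be the path it supplies. By Lemma~\ref{lem:zintheta}\ref{lem:zintheta_b}, $a,b$ have degree three in $W$ and the interior vertices of $P$ (including $z$) have degree two, while by \ref{lem:zintheta}\ref{lem:zintheta_a} $z$ is non-adjacent to $a$ and $b$. Since the degree-three vertices of a prism are exactly its six triangle vertices, and a path joining two of them whose interior avoids all triangle vertices can only be one of the three constituent paths (once one leaves a triangle vertex into a degree-two vertex one is committed to following that path to its other end), $P$ must be a full path of $W$. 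So I can name the triangles $\{a_1,a_2,a_3\},\{b_1,b_2,b_3\}$ and the paths $P_1,P_2,P_3$ (with $P_i$ from $a_i$ to $b_i$) so that $P=P_3$, $a=a_3$, $b=b_3$ and $z\in P_3^*$. Write $N_{P_3}(z)=\{p,p'\}$ with $p$ on the $a_3$-side, $p'$ on the $b_3$-side; by the definition of $G\tri^{z_1}_{z_2}$ we have $p,p'\in N_G(z_1)\cap N_G(z_2)$.

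Next I pass to $G$, using that $z$ is the only vertex of $G\tri^{z_1}_{z_2}$ not in $G$, so every induced subgraph of $W$ avoiding $z$ is an induced subgraph of $G$. Let $A$ be the subpath of $P_3$ from $a_3$ to $p$, let $B$ be the subpath from $p'$ to $b_3$, and let $C:=W\setminus P_3$, i.e.\ the cycle $a_1\dd P_1\dd b_1\dd b_2\dd P_2\dd a_2\dd a_1$. In $G$: $C$ is an induced cycle, and if $|V(C)|=4$ then $C$ is a $C_4$ in $G$, contradicting $G\in\mathcal{E}$, so $C$ is a hole; moreover $N_C(a_3)=\{a_1,a_2\}$ and $N_C(b_3)=\{b_1,b_2\}$ are two disjoint edges of $C$, $a_3\not\sim b_3$, and $V(A)\setminus\{a_3\}$ and $V(B)\setminus\{b_3\}$ are induced paths inside $P_3^*$, anticomplete to $C$ and to each other. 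By Lemma~\ref{lem:zintheta}\ref{lem:zintheta_c}, in $G$ both $z_1$ and $z_2$ have a neighbor in $W\setminus P_3=C$; and since $V(C)\subseteq W\setminus P_3^*\subseteq G\setminus(N_G[z_1]\cap N_G[z_2])$, no vertex of $C$ is a common neighbor of $z_1$ and $z_2$. Hence, applying Theorem~\ref{thm:evenwheeltheta} to the hole $C$ and the adjacent vertices $z_1,z_2$ and using $G\in\mathcal{E}$, exactly one of $z_1,z_2$ is $C$-bad; say $z_1$ is, with $N_C(z_1)=\{u_1,u_2\}$ where $u_1u_2$ is an edge of $C$.

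The core is then to exhibit a prism in $G$. Since $z_1\sim p\in V(A)$, let $q$ be the neighbor of $z_1$ on $V(A)$ nearest $a_3$ and set $R:=(a_3\dd\cdots\dd q\ \text{along}\ A)\dd z_1$, an induced path in $G$ from $a_3$ to $z_1$ whose interior lies in $P_3^*$ and is therefore anticomplete to $C$; define $R'$ from $b_3$ to $z_1$ along $B$ symmetrically. The prism I build uses two of the three ``pendant triangles'' $\{a_1,a_2,a_3\}$, $\{b_1,b_2,b_3\}$, $\{u_1,u_2,z_1\}$ of the hole $C$, the two arcs into which $C$ is cut by the two edges supporting those triangles, and whichever of $R$, $R'$, $R\cup R'$ joins their apexes. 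Concretely: if $u_1u_2$ is an edge of $P_1$ with $u_1,u_2\notin\{a_1,b_1\}$ (up to swapping $P_1,P_2$), I use the triangles $\{a_1,a_2,a_3\}$ and $\{u_1,u_2,z_1\}$, the two arcs of $C$ cut at $a_1a_2$ and $u_1u_2$ as two of the three paths, and $R$ as the third; if $u_1u_2=a_1a_2$, I use the triangles $\{a_1,a_2,z_1\}$ and $\{b_1,b_2,b_3\}$ with paths $P_1,P_2,R'$, and symmetrically if $u_1u_2=b_1b_2$. In each case the recorded adjacency facts — $N_C(a_3)=\{a_1,a_2\}$, $N_C(b_3)=\{b_1,b_2\}$, $N_C(z_1)=\{u_1,u_2\}$, the interiors of $R,R'$ anticomplete to $C$, and the fact that the only edges of $C$ between two complementary arcs are the two edges at which it is cut — make the verification that the chosen vertex set induces a prism in $G$ routine, giving the contradiction (a prism path of length one is allowed, so no extra care is needed when some $P_i$ or $R$ is a single edge here).

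I expect the main obstacle to be the remaining ``boundary'' positions of the edge $u_1u_2$: when one of $u_1,u_2$ lies in $\{a_1,a_2,b_1,b_2\}$ — so $u_1u_2$ is the first or last edge of $P_1$ or $P_2$, and in particular when that $P_i$ has length one — the two triangles chosen above overlap, and the prism must be rebuilt by rerouting one path the long way around $C$. A few of these rerouted configurations are not prisms but instead produce a $C_4$ in $G$ (again contradicting $G\in\mathcal{E}$) or an even wheel $(D,a_1)$ on a shorter hole $D$ assembled from $C$, $R$, $R'$ and $P_2$ (typically $a_1$ acquires exactly four neighbors on $D$, arranged in two consecutive blocks). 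Enumerating these boundary positions and checking that the rerouted subgraphs are genuinely induced is the bulk of the work; the generic positions handled in the previous paragraph are what finishes the proof once the bookkeeping is done.
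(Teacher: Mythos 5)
Your skeleton is the same as the paper's: pass $W$ through Lemma~\ref{lem:zintheta} to obtain a path $P$ of the prism with ends $a,b$ and the hole $C=W\setminus P$ in $G$; combine parts \ref{lem:zintheta_a} and \ref{lem:zintheta_c} with Theorem~\ref{thm:evenwheeltheta} to make one of $z_1,z_2$ (say $z_1$) $C$-bad with $N_C(z_1)=\{u_1,u_2\}$ an edge of $C$; then build a forbidden configuration in $G$ out of the apex triangles $\{a_1,a_2,a\}$, $\{b_1,b_2,b\}$, $\{u_1,u_2,z_1\}$ and arcs of $C$. Your cases (A), (B), (B$'$) are exactly the paper's first case, just carved up differently. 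But the cases you flag as ``boundary'' --- those where $u_1u_2$ shares exactly one endpoint with $\{a_1,a_2,b_1,b_2\}$, including when $P_1$ or $P_2$ has length one --- are left as a sketch, and you yourself call them ``the bulk of the work.'' That is a genuine gap. Moreover your description of what happens there is not reliable: once you have ruled out $|V(C)|=4$, none of these positions yields a $C_4$ (the relevant hole always has at least five vertices), and ``assembled from $C$, $R$, $R'$ and $P_2$'' is not precise enough to let a reader verify the even-wheel claim.

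The paper eliminates the fragmentation with one extra reduction you are missing. Since $W$ is a prism, the triangles are disjoint, so $\{a_1,a_2\}\cap\{b_1,b_2\}=\emptyset$; hence after possibly swapping $a\leftrightarrow b$ and relabelling one may assume $\{a_1,a_2\}\cap\{u_1,u_2\}\subseteq\{a_1\}$. That leaves only two cases. If the intersection is empty, take the prism with triangles $a a_1 a_2$ and $z_1 u_1 u_2$, the two arcs of $C$ cut at $a_1a_2$ and $u_1u_2$, and a path from $a$ to $z_1$ through $(P\setminus\{z\})\cup\{z_1\}$; this is your generic case and already absorbs your (B) and (B$'$). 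If instead $a_1=u_1$, let $R_2$ be the arc of $C$ from $a_2$ to $u_2$ avoiding $a_1$ and let $R$ be your path from $a$ to $z_1$. Then $C'=a\dd R\dd z_1\dd u_2\dd R_2\dd a_2\dd a$ is a hole in $G$, $a_1\notin C'$, and $a_1$ has exactly the four neighbours $a,a_2,u_2,z_1$ on $C'$, so $(C',a_1)$ is an even wheel. Adding that symmetry reduction at the start collapses all your boundary cases into this single computation and closes the proof.
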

\begin{proof}
Suppose for a contradiction that there is a prism $W$ in $G\tri^{z_1}_{z_2}$. Let $z\in V(G\tri^{z_1}_{z_2})$ be as in the definition of $G\tri^{z_1}_{z_2}$. Let $P$ be the path in $W$ with ends $a,b$ satisfying Lemma~\ref{lem:zintheta}. It follows from Lemma~\ref{lem:zintheta}\ref{lem:zintheta_a} and \ref{lem:zintheta_b} that $P$ is a path of $W$, $a$ and $b$ belong to distinct triangles of $W$ and we have $z\in P\setminus (N_{P}[a]\cup N_{P}[b])$. Let $aa_1a_2$ and $bb_1b_2$ be the triangles of $W$ and let $Q_1,Q_2$ be the paths of $W$ distinct from $P$ such that $Q_i$ has ends $a_i,b_i$ for $i\in \{1,2\}$. Let $C=Q_1\cup Q_2$. Then $C$ is a hole in $G\setminus \{z_1,z_2
\}$ and we have $C=W\setminus P$. 

 From the definition of $G\tri^{z_1}_{z_2}$, it follows that $W\setminus \{z\}\subseteq G\setminus \{z_1,z_2\}$ and $\{z_1,z_2\}$ is complete to $N_{P}(z)$. As a result, for every $i\in \{1,2\}$, there are two paths $P_{a,i},P_{b,i}$ in $(P\setminus \{z\})\cup \{z_i\}$ from $a$ to $z_i$ and from $b$ to $z_i$, respectively, such that $P_{a,i}\setminus \{z_i\}$ and $P_{b,i}\setminus \{z_i\}$ are disjoint and anticomplete in $G$, and both $P_{a,i}^*$ and $P^*_{b,i}$ are disjoint from and anticomplete to $C$ in $G$. 

Now, since $C=W\setminus P\subseteq W\setminus P^*$, it follows from Lemma~\ref{lem:zintheta}\ref{lem:zintheta_a} and the definition of $G\tri_{z_2}^{z_1}$ that $z_1$ and $z_2$ have no common neighbor in $C$, and it follows from Lemma~\ref{lem:zintheta}\ref{lem:zintheta_c} that $z_1$ and $z_2$ each have at least one neighbor in $C\setminus \{a,b\}$,  Consequently, by Theorem~\ref{thm:evenwheeltheta}, one of $z_1$ and $z_2$ is $C$-bad; say $z_1$ is $C$-bad. Let us write $N_C(z_1)=\{q_1,q_2\}$ where $q_1$ and $q_2$ are adjacent. Due to the symmetry between $\{a_1,a_2\}$ and $\{b_1,b_2\}$, we may assume, without loss of generality, that $\{a_1,a_2\}\cap \{q_1,q_2\}\subseteq  \{a_1\}$, and there are disjoint paths $R_1$ and $R_2$ in $C$ from $a_1$ to $q_1$ and from $a_2$ to $q_2$, respectively. It follows that either $\{a_1,a_2\}\cap \{q_1,q_2\}=\emptyset$ or $\{a_1,a_2\}\cap \{q_1,q_2\}=  \{a_1\}=\{q_1\}$. In the former case, there is a prism in $G$ with triangles $aa_1a_2,z_1q_1q_2$ and paths $P_{a,1},R_1$ and $R_2$. Also, in the latter case, $C'=a\dd P_{a,1}\dd z_1\dd q_2\dd R_2\dd a_2\dd a$ is a hole in $G$ and $a_1=q_1\in G\setminus C'$ has exactly four neighbors in $C'$, namely $a,a_2,q_2$ and $z_1$. But then $(C',z_1)$ is an even wheel in $G$. Both latter conclusions violate the assumption that $G\in \mathcal{E}$, hence completing the proof of Lemma~\ref{lem:<3noprism}.
\end{proof}

We can now give a proof of Theorem~\ref{thm:<3}:

\begin{proof}[Proof of Theorem~\ref{thm:<3}]
   Suppose not. Let $z\in V(G\tri^{z_1}_{z_2})$ be as in the definition of $G\tri^{z_1}_{z_2}$. First, we show that :
     
     \sta{\label{st:c4free} $G\tri^{z_1}_{z_2}$ is $C_4$-free.}

    To see this, suppose there is a hole $C$ of length four in $G\tri^{z_1}_{z_2}$. Since $G\in \mathcal{E}$, it follows that $z\in C$. So we have $N_C(z)=C\cap N_G(z_1)\cap N_G(z_2)$ and there exists exactly one vertex $z'$ in $C$ with $z'\in V(G\tri^{z_1}_{z_2})\setminus N_{G\tri^{z_1}_{z_2}}[z]=V(G)\setminus (N_G[z_1]\cap N_G[z_2])$. As a result, for some $j\in \{1,2\}$,  $z_j$ is not adjacent to $z'$ in $G$. But now $(C\setminus \{z\})\cup \{z_j\}$ is a hole of length four in $G$, contrary to the fact that $G\in \mathcal{E}$. This proves \eqref{st:c4free}.
    \medskip
    
    From \eqref{st:c4free} and Lemmas~\ref{lem:<3notheta} and \ref{lem:<3noprism}, we deduce that there exists an even wheel $(C,v)$ in $G\tri^{z_1}_{z_2}$. Let $W=G[V(C)\cup \{v\}]$ and let $P$ be the path in $W$ with ends $a,b$ satisfying Lemma~\ref{lem:zintheta}. It follows from Lemma~\ref{lem:zintheta}\ref{lem:zintheta_a} and \ref{lem:zintheta_b} that $z\in P\setminus (N_{P}[a]\cup N_{P}[b])$ and $P$ is a path of length at least four in $C$ such that $a,b\in N_C(v)\subseteq N_G(v)$ and $v$ is anticomplete to $P^*$ in $G\tri^{z_1}_{z_2}$. Let $Q=C\setminus P^*$. Then $Q$ is a path in $G$ from $a$ to $b$. Let $a'$ and $b'$ be the neighbors of $a$ and $b$ in $Q$, respectively. Since $C$ is an even wheel in $G\tri^{z_1}_{z_2}$, it follows that $Q$ has length at least three, and so $a,a',b,b'$ are all distinct. In addition, we have $W\setminus P^*=Q\cup \{v\}$, $W\setminus P=Q^*\cup \{v\}$, and $|N_Q(v)|=|N_C(v)|\geq 4$ is an even integer.
    
    From the definition of $G\tri^{z_1}_{z_2}$, it follows that $W\setminus \{z\}\subseteq G\setminus \{z_1,z_2\}$ and $\{z_1,z_2\}$ is complete to $N_{P}(z)$. As a result, for every $i\in \{1,2\}$, there are two paths $P_{a,i},P_{b,i}$ in $(P\setminus \{z\})\cup \{z_i\}$ from $a$ to $z_i$ and from $b$ to $z_i$, respectively, such that $P_{a,i}\setminus \{z_i\}$ and $P_{b,i}\setminus \{z_i\}$ are disjoint and anticomplete in $G$, and both $P_{a,i}^*$ and $P^*_{b,i}$ are disjoint from and anticomplete to $Q^*\cup \{v\}$ in $G$. We claim that:

 \sta{\label{st:linewheel} The vertex $v$ has a neighbor in $Q\setminus \{a,a',b,b'\}$.}

    Suppose not. Then we have $N_Q(v)= \{a,a',b,b'\}$. Then $C'=Q^*\cup \{v\}=W\setminus P$ is a hole in $G$. By Lemma~\ref{lem:zintheta}\ref{lem:zintheta_a} and the definition of $G\tri _{z_2}^{z_1}$, $z_1$ and $z_2$ have no common neighbor in $C'$, and by Lemma~\ref{lem:zintheta}\ref{lem:zintheta_c}, $z_1$ and $z_2$ each have at least one neighbor in $C'$.  Therefore, by Theorem~\ref{thm:evenwheeltheta}, one of $z_1$ and $z_2$, say the former, is $C'$-bad in $G$. Let $N_{C'}(z_1)=\{q,q'\}$ where $q$ and $q'$ are adjacent. The symmetry between $a'$ and $b'$ and between $q$ and $q'$ allows us to assume that $|\{a',v\}\cap \{q,q'\}|\leq 1$, and there are disjoint paths $R$ and $R'$ in $C'$ from $v$ to $q$ and from $a'$ to $q'$, respectively. It follows that either 
    \begin{itemize}
        \item $\{a',v\}\cap \{q,q'\}=\emptyset$; or 
        \item $\{a',v\}\cap \{q,q'\}=  \{a'\}=\{q'\}$; or 
        \item $\{a',v\}\cap \{q,q'\}=  \{v\}=\{q\}$.
    \end{itemize}
    If the first bullet above holds, then there is a prism in $G$ with triangles $aa'v, qq'z_1$ and paths $P_{a,1},R$ and $R'$. Also, if the second bullet above holds, then $C''=a\dd P_{a,1}\dd z_1\dd q\dd R\dd v\dd a$ is a hole in $G$ and $a'=q'\in G\setminus C''$ has exactly four neighbors in $C''$, namely $a,v,q$ and $z_1$, which in turn implies that $(C'',a')$ is an even wheel in $G$. Similarly, if the third bullet above holds,  then $C''=a\dd P_{a,1}\dd z_1\dd q'\dd R'\dd a'\dd a$ is a hole in $G$ and $v=q\in G\setminus C''$ has exactly four neighbors in $C''$, namely $a,a',q'$ and $z_1$. It follows that $(C'',v)$ is an even wheel in $G$.
    Each of the last three conclusions goes against the assumption that $G\in \mathcal{E}$. This proves \eqref{st:linewheel}.

    \sta{\label{st:ztohub} Let $i\in \{1,2\}$ such that $v$ is not adjacent to $z_i$ in $G$. Then $N_{W\setminus P}(z_i)$ is a non-empty subset of $\{a',b'\}$.}

    Suppose not.  By Lemma~\ref{lem:zintheta}\ref{lem:zintheta_c}, $z_1$ and $z_2$ each have at least one neighbor in $
    W\setminus P=Q^*\cup \{v\}$, and so $N_{W\setminus P}(z_i)\neq \emptyset$. It follows that there exists a vertex $q\in Q^*\setminus \{a',b'\}=Q\setminus (N_G[a]\cup N_G[b])$ which is adjacent to $z_i$ in $G$. This, together with \eqref{st:linewheel}, implies that $(Q^*\setminus \{a',b'\})\cup \{v,z_i\}$ is connected, and so there exists a path $S$ of length at least two in $(Q^*\setminus \{a',b'\})\cup \{v,z_i\}$ from $v$ to $z_i$. But now there is a theta in $G$ with ends $v,z_i$ and paths $v\dd a\dd P_{a,i}\dd z_i,v\dd b\dd P_{b,i}\dd z_i$ and $S$, contrary to the fact that $G\in \mathcal{E}$. This proves \eqref{st:ztohub}.

  \sta{\label{st:vnotanti} There exists $i\in \{1,2\}$ for which $v$ is adjacent to $z_i$ in $G$.}

   Suppose for a contradiction that $v$ is anticomplete $\{z_1,z_2\}$. By \eqref{st:ztohub}, both $N_{W\setminus P}(z_1)$ and $N_{W\setminus P}(z_2)$ are non-empty subset of $\{a',b'\}$. By Lemma~\ref{lem:zintheta}\ref{lem:zintheta_a} and the definition of $G\tri_{z_2}^{z_1}$, in the graph $G$, $z_1$ and $z_2$ do not have a common neighbor in $
    W\setminus P^*=Q\cup \{v\}$. Therefore, due to the symmetry between $z_1$ and $z_2$ and between $a'$ and $b'$, it is safe to assume that $N_{Q^*\cup \{v\}}(z_1)=\{a'\}$ and $N_{Q^*\cup \{v\}}(z_2)=\{b'\}$. In particular, $D=a'\dd z_1\dd z_2\dd b'\dd Q\dd a'$ is a hole in $G$ and $N_D(v)=N_Q(v)\setminus \{a,b\}$. Recall that $|N_Q(v)|$ is an even integer which is at least four, and so $|N_D(v)|$ is a non-zero even integer. Since $(D,v)$ is not even wheel in $G$ and $D\cup \{v\}$ is not a theta in $G$, it follows that $v$ is $D$-bad. From this combined with \eqref{st:linewheel}, and without loss of generality, we may assume that $v$ is not adjacent to $a'$ and there is a path $R$ in $G$ from $a'$ to $v$ with $R^*\subseteq Q^*\setminus \{a',b'\}$. Moreover, again by Lemma~\ref{lem:zintheta}\ref{lem:zintheta_a} and the definition of $G\tri_{z_2}^{z_1}$, in the graph $G$, $z_1$ and $z_2$ do not have a common neighbor in $
    W\setminus P^*=Q\cup \{v\}$. Specifically, there exists $i\in \{1,2\}$ for which $z_i$ is no adjacent to $a$. But now there is a theta in $G$ with ends $a,z_i$ and paths $a\dd a'\dd z_i,P_{a,i}$ and $a\dd v\dd b\dd P_{b,i}\dd z_i$, a contradiction. This proves \eqref{st:vnotanti}.

    \sta{\label{st:bothztohub} Let $i\in \{1,2\}$ such that $v$ is not adjacent to $z_i$ in $G$. Then either $z_i$ is anticomplete to $\{a,b\}$ in $G$, or $v$ is anticomplete to $\{z_1,z_2\}$ in $G$.}

    Suppose not. By Lemma~\ref{lem:zintheta}\ref{lem:zintheta_a}, $z_1$ and $z_2$ do not have a common neighbor in $\{a,b,v\}\subseteq Q\cup \{v\}=
    W\setminus P^*$. But now either either $G[\{a,v,z_1,z_2\}]$ or $G[\{b,v,z_1,z_2\}]$ is isomorphic to $C_4$, contrary to the fact that $G\in \mathcal{E}$. This proves \eqref{st:bothztohub}.
    \medskip

    Let us now finish the proof. In view of \eqref{st:vnotanti}, we may assume, without loss of generality, that $v$ is adjacent to $z_2$ in $G$. Thus, by \eqref{st:bothztohub}, $z_1$ is anticomplete to $\{a,b\}$ in $G$. Since $N_{W\setminus P}(z_1)$ is a non-empty subset of $\{a',b'\}$, we may assume that $a'$ is adjacent to $z_1$ in $G$. Moreover, since $G[\{a',v,z_1,z_2\}]$ is not isomorphic to $C_4$, it follows that $a'$ and $v$ are not adjacent in $G$. But then there is a theta in $G$ with ends $a,z_1$ and paths $a\dd a'\dd z_1,P_{a,1}$ and $a\dd v\dd b\dd P_{b,1}\dd z_1$, contrary to the fact that $G\in \mathcal{E}$. This completes the proof of Theorem~\ref{thm:<3}.
    \end{proof}

\section{Third time is the charm}\label{sec:gbu}

This section supplies the technical foundation for the proofs of Theorems~\ref{thm:main2-tree} and \ref{thm:tree+}. Despite its crucial role, the proof of Theorem~\ref{thm:difftodiff} below as the main result revolves around three successive applications of Theorem~\ref{banana}. Perhaps more importantly, it highlights the significance of not giving up after the first two rounds.

In essence, Theorem~\ref{thm:difftodiff} asserts that in a (theta, prism, even wheel)-free graph of bounded clique number with a configuration of sufficiently many ``put-together'' holes, a vertex $z$ with at least one private neighbor in each hole will have \textit{several} neighbors in most of these holes. This result, combined with Theorem~\ref{thm:evenwheeltheta}, will be used in Section~\ref{sec:kaleidoscope} to demonstrate that adjacent pairs of vertices with neighbors in these holes must indeed have common neighbors within them.

To make this precise, we begin with the definition of the said configuration of holes. Given a graph $G$ and an integer $w\geq 1$, a \textit{$w$-kaleidoscope} in $G$ is a 4-tuple $(a,x,y,\mathcal{W})$ where:
\begin{enumerate}[(K1), leftmargin=15mm, rightmargin=7mm]
\item\label{K1} $a,x,y\in V(G)$, and $x\dd a\dd y$ is a path in $G$ (so $x$ and $y$ are distinct and non-adjacent);
\item\label{K2} $\mathcal{W}$ is a set of $w$ pairwise internally disjoint paths in $G\setminus {a}$ from $x$ to $y$; and
\item\label{K3} for every $W\in \mathcal{W}$, $a$ is anticomplete to $W^*$ in $G$.
\end{enumerate}

Furthermore, given a subset $Z\subseteq V(G)$ and an integer $d\geq 1$, we say that $Z$ \textit{is $d$-mirrored by $(a,x,y,\mathcal{W})$} if:
\begin{enumerate}[(M1), leftmargin=15mm, rightmargin=7mm]
\item\label{M1} $Z$ is disjoint from $(\bigcup_{W\in \mathcal{W}}V(W))\cup \{a\}$;
\item\label{M2} the vertex $a$ has at most one neighbor in $Z$; and
\item\label{M3} for every $z\in Z$ and every $W\in \mathcal{W}$, $z$ is anticomplete to $N_{W}[x]\cup N_{W}[y]$, and $z$ has at least $d$ distinct neighbors in $W$. In particular, $z$ is anticomplete to $\{x,y\}$.
\end{enumerate}
We also say a vertex $z\in V(G)$ is \textit{$d$-mirrored by $(a,x,y,\mathcal{W})$} if $\{z\}$ is $d$-mirrored by $(a,x,y,\mathcal{W})$. Our goal in this section is to show that:
\begin{theorem}\label{thm:difftodiff}
For all integers $d,t,w\geq 1$, there exists an integer $\kappa=\kappa(d,t,w)\geq 1$ with the following property. Let $G$ be a (theta, prism, even wheel, $K_t$)-free graph, let $(a,x,y,\mathcal{W})$ be a $\kappa$-kaleidoscope in $G$, and let $z\in V(G)$ be $1$-mirrored by $(a,x,y,\mathcal{W})$. Then there exists $\mathcal{W}'\subseteq \mathcal{W}$ with $|\mathcal{W}'|=w$ such that $z$ is $d$-mirrored by $(a,x,y,\mathcal{W}')$.
\end{theorem}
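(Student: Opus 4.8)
I would prove the statement in contrapositive, quantitative form: there is a function $f=f(d,t)\ge 1$ such that in any (theta, prism, even wheel, $K_t$)-free graph $G$ carrying a kaleidoscope $(a,x,y,\mathcal W)$ and a vertex $z$ that is $1$-mirrored by it, at most $f$ of the paths $W\in\mathcal W$ have fewer than $d$ neighbours of $z$. Granting this, it suffices to take $\kappa=f(d,t)+w$: deleting the at most $f$ ``deficient'' paths leaves $\mathcal W'$ with $|\mathcal W'|\ge w$ in which $z$ has at least $d$ neighbours in every member, while (M1), (M2), the anticompleteness part of (M3) and (K1)--(K3) are all inherited by sub-collections, so $z$ is $d$-mirrored by $(a,x,y,\mathcal W')$. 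So suppose for contradiction that more than $f$ paths $W$ satisfy $1\le|N_W(z)|\le d-1$. As there are only $d-1$ possible values, a pigeonhole step yields an enormous $\mathcal W_0\subseteq\mathcal W$ and $\ell\in\{1,\dots,d-1\}$ with $|N_W(z)|=\ell$ for all $W\in\mathcal W_0$; we also fix whether or not $z$ is adjacent to $a$. For $W\in\mathcal W_0$ write $W=w_0^W\dd\cdots\dd w_{k_W}^W$ with $w_0^W=x$, $w_{k_W}^W=y$. By (M3) the neighbours of $z$ in $W$ lie among $w_2^W,\dots,w_{k_W-2}^W$, so letting $p_W,q_W$ be the first and last of them along $W$, the initial segment $A_W$ of $W$ from $x$ to $p_W$ and the terminal segment $B_W$ from $q_W$ to $y$ each have length at least two, and $A_W\cup\{z\}$ (resp.\ $B_W\cup\{z\}$) is an induced path from $x$ (resp.\ $y$) to $z$; moreover these $x$--$z$ paths (resp.\ $y$--$z$ paths) are pairwise internally disjoint over $W\in\mathcal W_0$. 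Two facts will recur: each $C_W:=W\cup\{a\}$ is a hole of $G$ by (K1) and (K3); and since $G$ has no even wheel, if $z$ has at least three neighbours in $C_W$ then it has an odd number of them, and since $|N_{C_W}(z)|$ equals $\ell$ or $\ell+1$ according to whether $z$ is adjacent to $a$, this pins down the parity of $\ell$ in the cases where it matters.

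The core is three successive applications of Theorem~\ref{banana}, each invoked with its parameter $\nu$ chosen astronomically large relative to everything downstream (including $d$, $w$, $t$ and the auxiliary Ramsey bounds). \emph{Round one:} apply Theorem~\ref{banana} to the family $\{A_W\cup\{z\}:W\in\mathcal W_0\}$ of pairwise internally disjoint $x$--$z$ paths ($x,z$ non-adjacent by (M3)), getting a large $\mathcal W_1\subseteq\mathcal W_0$, linearly ordered, whose first member $W^{(1)}$ has $x$-neighbour $\sigma:=w_1^{W^{(1)}}$ non-adjacent to $y$ and to $z$, with $\{w_1^W:W\in\mathcal W_1\}\cup\{z\}$ stable and $\sigma$ having a neighbour in $A_W\setminus\{x,w_1^W\}$ for every other $W\in\mathcal W_1$; thus $\sigma$ attaches to the ``$x$-region'' of each remaining path. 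Discard $W^{(1)}$. \emph{Round two:} inside what remains, apply Theorem~\ref{banana} to the $y$--$z$ paths $\{B_W\cup\{z\}\}$, obtaining a large $\mathcal W_2\subseteq\mathcal W_1$ and a vertex $\tau$, non-adjacent to $x$ and to $z$, attaching to the ``$y$-region'' $B_W\setminus\{y,w_{k_W-1}^W\}$ of each remaining $W$, with $\{w_{k_W-1}^W:W\in\mathcal W_2\}\cup\{z\}$ stable; discard $\tau$'s path. \emph{Round three:} inside what remains, apply Theorem~\ref{banana} a third time to a further family of pairwise internally disjoint paths manufactured from the structure already secured -- for instance the $\sigma$--$z$ paths running through the $x$-region of each surviving $W$, or the original $x$--$y$ paths $W$ themselves -- producing a still-large $\mathcal W_3$ on which all three staircase conclusions hold simultaneously. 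The one delicacy in chaining the three rounds is that each imposes its own order on the surviving collection; this is absorbed by repeatedly passing to monotone subsequences, which is affordable precisely because the thresholds are set so the collection remains larger than whatever is needed next, with $K_t$-freeness and Lemma~\ref{ramsey2} keeping all auxiliary counting finite.

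It then remains to combine $\mathcal W_3$ with $\sigma$, $\tau$, $z$, the holes $C_W$ and the path $x\dd a\dd y$ to exhibit a theta, a prism, or an even wheel in $G$, contradicting $G\in\mathcal E$. Having fixed two or three surviving paths whose mutual attachments are governed by the three staircases, and knowing that $\sigma$ meets the $x$-region, $\tau$ the $y$-region and $z$ the middle region of each, one routes three pairwise internally disjoint paths with pairwise anticomplete interiors between a well-chosen pair of ends (a theta), or finds two vertex-disjoint triangles joined by three such paths (a prism), or builds a hole carrying a vertex with an even number of neighbours on it (an even wheel); a finite case analysis on $\ell$, on the parity forced by even-wheel-freeness (which is also what turns ``three neighbours'' into ``wheel centre'' when one wishes to bootstrap past $d=3$), and on the handful of possible coincidences and adjacencies among $\sigma,\tau,z,a$ finishes it. I expect this last step -- actually producing the forbidden induced subgraph -- to be the principal obstacle, and I read the section's remark about ``not giving up after the first two rounds'' as exactly this point: any two of the three staircases leave a gap that cannot be closed into a theta, prism or even wheel, and it is the attachment supplied by the third application of Theorem~\ref{banana} that completes the configuration.
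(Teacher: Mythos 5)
Your high-level framing --- proving the contrapositive, pigeonhole on the ``deficient'' paths, recognizing that three applications of Theorem~\ref{banana} are needed --- all matches the paper. But the proposal stops short of a proof in exactly the places that matter, and the geometry you sketch is not the one that closes the argument.

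The paper applies Theorem~\ref{banana} once, to the paths $z\dd u^1_W\dd W\dd x_W\dd x$ running from $z$ to $x$. The output --- two disjoint families $\mathcal W_2, \mathcal W_3$ with cross-attachments --- is packaged as a \emph{palanquin} $(z, \{u^1_W : W\in\mathcal W_2\}, \{L^*_{W'} : W'\in\mathcal W_3\})$, where $L_{W'}=u^1_{W'}\dd W'\dd x_{W'}\dd x$ is the $x$-side segment of $W'$. Lemma~\ref{lem:palanquintoalignment}, which internally runs Theorem~\ref{banana} two more times (both times \emph{between} vertices of $\{u^1_W\}$, along the $x$-side paths $L^*_{W'}$), then produces a $4$-alignment of $L_{W'}$-ugly vertices on a single $L^*_{W'}$. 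That alignment --- pairwise non-overlapping neighbour-intervals on one path, with all attachments ugly --- is the concrete gadget your ``staircase'' language gestures at, but you never derive it and never name a substitute. Crucially, none of the three bananas goes toward $y$: your ``round two on the $y$--$z$ paths $B_W\cup\{z\}$'' has no counterpart in the paper's argument. The $y$-side of $W'$ enters only at the very end, where Theorem~\ref{thm:evenwheeltheta} forces $z$ to be $C$-bad on $C=a\dd x\dd W'\dd y\dd a$ and even-wheel-freeness pushes $u^1_{W_2},u^1_{W_3}$ to attach strictly past $u^2_{W'}$, which yields a theta with ends $u^1_{W_2},u^1_{W_3}$ via the hole $z\dd u^1_{W_1}\dd v'_1\dd W'\dd v_4\dd u^1_{W_4}\dd z$.

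As written, your round three is an explicit placeholder (``for instance the $\sigma$--$z$ paths \dots or the original $x$--$y$ paths $W$ themselves''), and you acknowledge the final contradiction is unresolved (``the principal obstacle''). Without an alignment-type conclusion after the second round, a concrete third application, and a specified forbidden configuration, this is a plan rather than a proof, and its $x$-side/$y$-side symmetry is a wrong turn: the whole build-up happens on the $x$-side, and the $y$-side is used only once, for parity.
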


We start with a number of further definitions and lemmas. Let $G$ be a graph, and let $s,l\geq 1$ be integers. An \textit{$(s,l)$-palanquin} in $G$ is a triple $(a,S,\mathcal{L})$ where:

\begin{enumerate}[(P1), leftmargin=15mm, rightmargin=7mm]
\item\label{P1} $a\in V(G)$, $S\subseteq N_G(a)$ is a stable set of cardinality $s$ in $G$, and $\mathcal{L}$ is a collection of $l$ pairwise disjoint paths in $G\setminus (S\cup \{a\})$; and
\item\label{P2} for every $L\in \mathcal{L}$, $a$ is anticomplete to $L$, and every vertex in $S$ has a neighbor in $L$.
\end{enumerate}
For instance, given a $w$-kaleidoscope $(a,x,y,\mathcal{W})$ in $G$ for some $w\geq 1$, one may easily observe that $(a,\{x,y\},\{W^*:W\in\mathcal{W}\})$ is a $(2,w)$-palanquin in $G$.
Next we have two lemmas about palanquins, with short and self-contained proofs (and, although less efficiently, these lemmas can also be deduced from appropriate results in earlier papers of this series).

\begin{lemma}\label{lem:nocommon&good}
    Let $p,q,t\geq 1$ be integers. Let $G$ be a (theta, $K_t$)-free graph and let $(a,S,\mathcal{L})$ be a $(2^qp+2q,(2^qp+2q)^2t^3+q)$-palanquin in $G$. Then there exists $S_1\subseteq S$ with $|S_1|=p$ and $\mathcal{L}_1\subseteq \mathcal{L}$ with $|\mathcal{L}_1|=q$ such that for every $L\in \mathcal{L}_1$, no two vertices in $S_1$ have a common neighbor in $L$, and either $x$ is $L$-bad for all $x\in S_1$, or $x$ is $L$-ugly for all $x\in S_1$.
\end{lemma}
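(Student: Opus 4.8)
The plan is to delete from $\mathcal{L}$ first the paths "spoiled'' by a common neighbour of some pair from $S$, and then to run a $q$-step halving argument on what remains, discarding at each step the (at most two) vertices of $S$ that are "good'' on the path used at that step; the values $s=2^{q}p+2q$ and $l=s^{2}t^{3}+q$ are calibrated exactly so that this bookkeeping closes.

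By the definition of a palanquin, every vertex of $S$ has a neighbour in every $L\in\mathcal{L}$, so each such vertex is exactly one of $L$-good, $L$-bad, $L$-ugly. Fix distinct $x,y\in S$; they are non-adjacent (as $S$ is stable), both adjacent to $a$, and $a$ is anticomplete to every $L\in\mathcal{L}$. If $c,c'$ were distinct, non-adjacent vertices of $\bigcup_{L\in\mathcal{L}}V(L)$ each adjacent to both $x$ and $y$, then $x\dd a\dd y$, $x\dd c\dd y$, $x\dd c'\dd y$ would be a theta in $G$ (the interiors $\{a\},\{c\},\{c'\}$ are pairwise disjoint and pairwise anticomplete, and $x$ and $y$ are non-adjacent), a contradiction. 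Hence the common neighbours of $x$ and $y$ lying on paths of $\mathcal{L}$ form a clique of the $K_{t}$-free graph $G$, so there are fewer than $t$ of them, and since the paths of $\mathcal{L}$ are pairwise disjoint, fewer than $t^{3}$ paths of $\mathcal{L}$ carry a common neighbour of $x$ and $y$. Summing over the fewer than $s^{2}$ pairs from $S$, fewer than $s^{2}t^{3}$ paths of $\mathcal{L}$ carry a common neighbour of some pair from $S$; letting $\mathcal{L}^{*}$ be the set of all other paths, we get $|\mathcal{L}^{*}|\geq l-s^{2}t^{3}=q$, and no two vertices of $S$ have a common neighbour in any $L\in\mathcal{L}^{*}$.

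Next I claim that for every $L\in\mathcal{L}^{*}$, at most two vertices of $S$ are $L$-good. Suppose not, and let $x,y,z\in S$ be distinct and $L$-good, say $N_{L}(x)=\{u\}$, $N_{L}(y)=\{v\}$, $N_{L}(z)=\{w\}$. Since $L\in\mathcal{L}^{*}$, the vertices $u,v,w$ are pairwise distinct, and after relabelling we may assume that $v$ lies between $u$ and $w$ on $L$; let $P'$ and $P''$ be the subpaths of $L$ from $u$ to $v$ and from $v$ to $w$, which then share only the vertex $v$. Now $a\dd y\dd v$, the path $a\dd x\dd u\dd\cdots\dd v$ with vertex set $\{a,x\}\cup V(P')$, and the path $a\dd z\dd w\dd\cdots\dd v$ with vertex set $\{a,z\}\cup V(P'')$ are three induced paths, each of length at least two, joining the non-adjacent vertices $a$ and $v$; they are internally disjoint and their interiors are pairwise anticomplete, since $a$ is anticomplete to $L$, each of $x,y,z$ has a unique neighbour on $L$, and a vertex of $P'$ other than $v$ and a vertex of $P''$ other than $v$ are non-adjacent (as $L$ is an induced path). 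Hence $G$ contains a theta, a contradiction.

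Finally, choose any $\mathcal{L}_{1}=\{L_{1},\dots,L_{q}\}\subseteq\mathcal{L}^{*}$ (possible as $|\mathcal{L}^{*}|\geq q$), and let $S^{-}$ be the set of vertices of $S$ that are not $L_{i}$-good for any $i\in[q]$. By the previous paragraph $|S\setminus S^{-}|\leq 2q$, so $|S^{-}|\geq s-2q=2^{q}p$, and every vertex of $S^{-}$ is $L_{i}$-bad or $L_{i}$-ugly for each $i$. Set $S^{(0)}=S^{-}$ and, for $i=1,\dots,q$, let $S^{(i)}$ be whichever of $\{x\in S^{(i-1)}:x\text{ is }L_{i}\text{-bad}\}$ and $\{x\in S^{(i-1)}:x\text{ is }L_{i}\text{-ugly}\}$ is larger, so that $|S^{(i)}|\geq\lceil|S^{(i-1)}|/2\rceil$. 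Then $|S^{(q)}|\geq|S^{-}|/2^{q}\geq p$; pick $S_{1}\subseteq S^{(q)}$ with $|S_{1}|=p$. Since $S_{1}\subseteq S^{(i)}$ for every $i$, the set $S_{1}$ is entirely $L_{i}$-bad or entirely $L_{i}$-ugly, and since $\mathcal{L}_{1}\subseteq\mathcal{L}^{*}$, no two vertices of $S_{1}$ have a common neighbour in any $L_{i}$; this is the desired conclusion. I expect the "at most two $L$-good vertices'' claim to be the crux: it genuinely requires both the apex $a$ of the palanquin (to close up the theta through $a$ and $v$) and the preceding common-neighbour reduction, without which the $L$-good vertices could share their neighbour on $L$ and the construction would collapse.
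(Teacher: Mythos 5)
Your proof is correct and follows the same overall structure as the paper's: first pass to a subfamily $\mathcal{L}^{*}$ of at least $q$ paths on which no pair of $S$ shares a common neighbor, then show each such path has at most two $S$-vertices that are $L$-good (the same theta through $a$ and the middle neighbor), then a $q$-step halving to synchronize bad/ugly. The one place you diverge is the first step: the paper supposes fewer than $q$ clean paths, pigeonholes to a single pair $x,x'$ with common neighbors on $t^{3}$ paths, and applies Ramsey to extract three pairwise non-adjacent ones, giving a theta with ends $x,x'$. You instead observe directly that the common neighbors of a fixed pair $x,y$ across $\bigcup\mathcal{L}$ must form a clique (since two non-adjacent ones $c,c'$ plus the apex $a$ already give the theta $x\dd a\dd y$, $x\dd c\dd y$, $x\dd c'\dd y$), so each pair dirties fewer than $t\leq t^{3}$ paths. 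This is slightly more direct, avoids Ramsey's theorem for this sub-step by exploiting the apex of the palanquin, and in fact gives a tighter count than the $t^{3}$ the parameters were calibrated for; both arguments land on the same bookkeeping. The rest of your argument is careful where it needs to be (using $\mathcal{L}^{*}$ to ensure $u,v,w$ are distinct before building the second theta) and matches the paper.
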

\begin{proof}
    We first show that:

    \sta{\label{st:nocommon} There exists $\mathcal{L}_0\subseteq \mathcal{L}$ with $|\mathcal{L}_0|=q$ such that for every $L\in \mathcal{L}_0$, no two vertices in $S$ have a common neighbor in $L$.}

Suppose not. Since $|\mathcal{L}|=(2^qp+2q)^2t^3+q$, it follows that there exists $\mathcal{L}'_1\subseteq \mathcal{L}$ with $|\mathcal{L}'_1|=(2^qp+2q)^2t^3$ such that for every $L\in \mathcal{L}'_1$, there exist two distinct vertices in $S$ with a common neighbor in $L$. Since $|S|=2^qp+2q$, this in turn implies that there are two vertices $x,x'\in S$ as well as a subset $\mathcal{L}''_1$ of $\mathcal{L}'_1$ with $|\mathcal{L}''_1|=t^3$ such that for every $L\in \mathcal{L}''_1$, the vertices $x,x'$ have a common neighbor $y_L$ in $L$. Thus, we have $|\{y_L:L\in \mathcal{L}''_1\}|=t^3$, which along with Theorem~\ref{classicalramsey} and the assumption that $G$ is $K_t$-free implies that there are three distinct paths $L_1,L_2,L_3\in \mathcal{L}''_1$ for which $\{y_{L_1},y_{L_2},y_{L_3}\}$ is a stable set in $G$. But now there is a theta in $G$ with ends $x,x'$ and paths $x\dd y_{L_1}\dd x'$, $x\dd y_{L_2}\dd x'$ and $x\dd y_{L_3}\dd x'$, a contradiction. This proves \eqref{st:nocommon}.

\medskip

Henceforth, let $\mathcal{L}_0$ be as in \eqref{st:nocommon}.

\sta{\label{sta:killgood} There exists $S_0\subseteq S$ with $|S_0|=2^{q}p$ such that for every $x\in S_0$ and every $L\in \mathcal{L}_0$, $x$ is either $L$-bad or $L$-ugly.}

Note that every vertex in $S$ has a neighbor in every path in $\mathcal{L}_0\subseteq \mathcal{L}$. Therefore, since $|S|=2^{q}p+2q$ and $|\mathcal{L}_0|=q$, in order to prove \eqref{sta:killgood}, it suffices to show that for every $L\in \mathcal{L}_0$, there at most two vertices in $S$ which are $L$-good. Suppose for a contradiction that there exists a $3$-subset $\{x_1,x_2,x_3\}$ of $S$ and a path $L\in \mathcal{L}_0$ such that $x_1,x_2,x_3$ are all $L$-good. For each $i\in [3]$, let $y_i$ be the unique neighbor of $x_i$ in $L$. We may assume without loss of generality that the path in $L$ from $y_1$ to $y_3$ contains $y_2$. Recall also that $a$ is complete to $\{x_1,x_2,x_3\}\subseteq S$
and anticomplete to $L\in \mathcal{L}_0\subseteq\mathcal{L}$. But now there is a theta in $G$ with ends $a,y_2$ and paths $a\dd x_1\dd y_1\dd L\dd y_2, a\dd x_2\dd y_2$ and $a\dd x_3\dd y_3\dd L\dd y_2$, a contradiction. This proves \eqref{sta:killgood}.

\medskip

Let $S_0$ be as in \eqref{sta:killgood}. Fix an enumeration $L_1,\ldots, L_{q}$ of the elements of $\mathcal{L}_0$. In view of \eqref{sta:killgood}, one may construct a sequence $X_0\supset X_1\supset \cdots\supset X_{q}$ of sets such that:
\begin{itemize}
    \item $X_0=S_0$ and for every $i\in [q]$, we have $|X_i|=|X_{i-1}|/2$; and
    \item for every $i\in [q]$, either $x$ is $L_i$-bad for all $x\in X_i$ or $x$ is $L_i$-ugly for all $x\in X_i$.
\end{itemize}
Let $S_1=X_{q}$. Then we have $|S_1|=|X_{q}|=2^{-q}|X_0|=2^{-q}|S_0|=p$, and for every $i\in [q]$, either $x$ is $L_i$-bad for all $x\in S_1$ or $x$ is $L_i$-ugly for all $x\in S_1$. But then by \eqref{st:nocommon}, $S_1$ and $\mathcal{L}_1$ satisfy Lemma~\ref{lem:nocommon&good}, as required.
\end{proof}
{%
\begin{figure}[t!]
  \centering
  \includegraphics[scale=0.7]{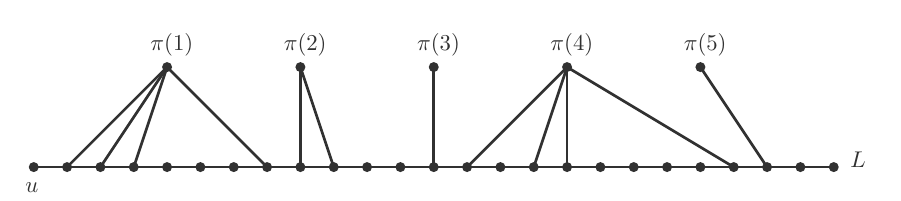}
  \captionof{figure}{A $5$-alignment, where $\pi(3)$ and $\pi(5)$ are $L$-good, $\pi(2)$ is $L$-bad and $\pi(1)$ and $\pi(4)$ are $L$-ugly.}
  \label{fig:alignment}
  \end{figure}
}
Let $G$ be a graph and let $s\geq 1$ be an integer. An \textit{$s$-alignment} is a quadruple $(S,L,x,\pi)$ where:

\begin{enumerate}[({\rm A}1), leftmargin=15mm, rightmargin=7mm]
\item\label{A1} $S\subseteq V(G)$ is stable with $|S|=s$ and $L$ is a path in $G\setminus S$ and $x$ is an end of $L$;
\item\label{A2} every vertex in $S$ has a neighbor in $L$; and
\item\label{A3} $\pi:[s]\rightarrow S$ is a bijection such that for all $i,j\in [s]$ with $i<j$, traversing $L$ starting at $x$, all neighbors of $\pi(i)$ appear strictly  before all neighbors of $\pi(j)$ in $L$.
\end{enumerate}
See Figure~\ref{fig:alignment}. It follows in particular from \ref{A3} that no two vertices in $S$ have a common neighbor in $L$.

\begin{lemma}\label{lem:getalignment}
    Let $s\geq 1$ be an integer. Let $G$ be a theta-free graph, let $(a,S,\{L\})$ be an $(s,1)$-palanquin in $G$ and let $x_L$ be an end of $L$. Assume that no two vertices in $S$ have a common neighbor in $L$. Assume also that either $x$ is $L$-bad for all $x\in S$, or $x$ is $L$-ugly for all $x\in S$. Then there exists a bijection $\pi:S\rightarrow [s]$ such that $(S,L,x_L,\pi)$ is an $s$-alignment in $G$.
    \end{lemma}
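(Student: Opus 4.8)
The plan is to show that the neighbourhoods $N_L(x)$, $x\in S$, are pairwise \emph{separated} along $L$: for distinct $x,x'\in S$, reading $L$ from $x_L$, either every neighbour of $x$ on $L$ precedes every neighbour of $x'$ on $L$, or vice versa. Granting this, one lets $\pi$ enumerate $S$ in the resulting order; then (A3) holds by construction, while (A1) and (A2) are immediate from the palanquin axioms (note $V(L)\cap S=\emptyset$, since $L$ lies in $G\setminus(S\cup\{a\})$, and every vertex of $S$ has a neighbour on $L$ by (P2)). Since $N_L(x)\cap N_L(x')=\emptyset$ by assumption, separation of $N_L(x)$ and $N_L(x')$ is equivalent to disjointness of the minimal subpaths of $L$ containing them, so it is this that I will establish.

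When every $x\in S$ is $L$-bad, $N_L(x)$ is a clique of the path $L$ and so has at most two vertices; thus the minimal subpath of $L$ containing $N_L(x)$ equals $N_L(x)$, and the required disjointness is precisely the hypothesis that no two vertices of $S$ have a common neighbour on $L$. The substantive case is when every $x\in S$ is $L$-ugly. Suppose, for a contradiction, that for some distinct $x,x'\in S$ the minimal subpaths of $L$ containing $N_L(x)$ and $N_L(x')$ meet; relabelling $x,x'$, we may assume the first neighbour of $x$ on $L$ precedes the first neighbour $\sigma$ of $x'$ on $L$. Then $\sigma$ lies strictly between two neighbours of $x$; letting $u$ and $w$ be the neighbours of $x$ on $L$ immediately preceding and following $\sigma$, the cycle $C:=x\dd u\dd L\dd w\dd x$ is a hole with $N_C(x)=\{u,w\}$, and every neighbour of $x'$ on $C$ lies strictly between $u$ and $w$, the first of them being $\sigma$.

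The core of the argument is to produce a theta from this configuration, using in addition that $a$ is complete to $\{x,x'\}$ and anticomplete to $L$, and that $x\not\sim x'$ (as $S$ is stable). Let $\alpha\preceq\beta$ be the first and last neighbours of $x'$ on $C$, so $\alpha=\sigma$. If $\alpha=\beta$, then $x\dd a\dd x'\dd\alpha$, $x\dd u\dd L\dd\alpha$ and $x\dd w\dd L\dd\alpha$ constitute a theta with ends $x,\alpha$. If $\alpha\prec\beta$ and $\alpha\not\sim\beta$, then $x\dd a\dd x'$, $x\dd u\dd L\dd\alpha\dd x'$ and $x\dd w\dd L\dd\beta\dd x'$ constitute a theta with ends $x,x'$. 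If $\alpha\sim\beta$, then $N_C(x')=\{\alpha,\beta\}$ is a clique, so, $x'$ being $L$-ugly, it has a neighbour $\rho$ off the subpath $L[u,w]$, which we may take nearest to $L[u,w]$ among such neighbours of $x'$; if $\rho$ precedes $u$, let $u'$ be the first neighbour of $x$ on $L$ following $\rho$ (so $\rho\prec u'\preceq u$) and take the theta with ends $x,x'$ and paths $x\dd a\dd x'$, $x\dd w\dd L\dd\beta\dd x'$, $x\dd u'\dd L\dd\rho\dd x'$; symmetrically, if $\rho$ follows $w$, take paths $x\dd a\dd x'$, $x\dd u\dd L\dd\alpha\dd x'$, $x\dd w'\dd L\dd\rho\dd x'$ with $w'$ the last neighbour of $x$ on $L$ preceding $\rho$. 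In each case the three displayed paths are readily checked to be induced and of length at least two, to meet only in the two named ends, and to have pairwise anticomplete interiors — using that $L$ is an induced path, that $N_L(x)\cap N_L(x')=\emptyset$, and the extremal choices of $u,w,\alpha,\beta,\rho$ — so their union is a theta, contradicting $G$ being theta-free.

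The step I expect to be the main obstacle is the last case $\alpha\sim\beta$: here the naive choice of the two $L$-paths (routing one through $u$ to $\alpha$ and the other through $w$ to $\beta$) is spoiled by the chord $\alpha\beta$ joining their interiors. This is exactly where the hypothesis that $x'$ is $L$-ugly is indispensable: it furnishes a neighbour $\rho$ of $x'$ outside the hole $C$, and rerouting one of the $L$-paths out to $\rho$ places a nonempty stretch of $L$ (containing $\alpha$ or $\beta$) between the two interiors, restoring anticompleteness. Everything else reduces to routine bookkeeping about the order of neighbours along the path $L$.
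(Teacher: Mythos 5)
Your proof is correct and follows essentially the same strategy as the paper's: reduce to showing the neighbourhoods $N_L(x)$ are pairwise separated, handle the $L$-bad case trivially, and in the $L$-ugly case suppose two minimal subpaths meet and build a theta using the anchor vertex $a$. The only real difference is the case analysis for the theta construction — the paper sets up the four extremal neighbours $u_{x_1},u_{x_2},v_{x_1},v_{x_2}$ and splits into just two cases on whether $u_{x_2}\dd L\dd v_{x_i}$ has empty interior, whereas you localize to a hole $C$ through two consecutive $x$-neighbours straddling the first neighbour of $x'$ and branch on the structure of $N_C(x')$ (one, two non-adjacent, or two adjacent neighbours, the last subdividing by the side of the escape vertex $\rho$) — but this is a cosmetic reorganization of the same argument, not a genuinely different route.
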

\begin{proof}
Since no two vertices in $S$ have a common neighbor in $L$, the result is trivial if every vertex in $S$ is $L$-bad. So we may assume that all vertices $S$ are $L$-ugly. For every $x\in S$, traversing $L$ starting at $x_L$, let $u_x,v_x$ be the first and the last neighbor of $x$ in $L$, respectively; thus, $u_x$ and $v_x$ are distinct. In order to prove Lemma~\ref{lem:getalignment}, it suffices to show that for every two vertices $x_1,x_2\in S$, the paths $u_{x_1}\dd L\dd v_{x_1}$ and $u_{x_2}\dd L\dd v_{x_2}$ have no vertex in common. Suppose this is violated by  $x_1,x_2\in S$. Since $x_1$ and $x_2$ have no common neighbor in $L$, it follows that $u_{x_1},u_{x_2},v_{x_1},v_{x_2}$ are pairwise distinct, $x_1$ is not adjacent to $u_{x_2},v_{x_2}$, and $x_2$ is not adjacent to $u_{x_1},v_{x_1}$. Since both $x_1$ and $x_2$ are $L$-ugly, we may assume without loss of generality that for some $\{i,j\}=\{1,2\}$, $L$ traverses the vertices  $x_L, u_{x_1},u_{x_2},v_{x_i},v_{x_j}$ in this order (where $x_L$ and $u_{x_1}$ might be the same). It follows that there are two paths $P$ and $Q$ in $G$ from $x_1$ to $x_2$ such that $P^*$ contains $u_{x_2}$ and is contained in $u_{x_1}\dd L\dd u_{x_2}$, and $Q^*$ contains $v_{x_i}$ and is contained in $v_{x_1}\dd L\dd v_{x_2}$. In particular, $P$ and $Q$ are internally disjoint. Recall also that $a$ is complete to $\{x_1,x_2\}\subseteq S$
and anticomplete to $L$. Now, if the path $u_{x_2}\dd L\dd v_{x_i}$ has non-empty interior, then $P^*$ and $Q^*$ are anticomplete, and so there is a theta in $G$ with ends $x_1,x_2$ and paths $x_1\dd a\dd x_2, P$ and $Q$, a contradiction (see Figure~\ref{fig:thetaonpath} top). Otherwise, since $x_2$ is $L$-ugly, we have $i=1$, and so there is a theta in $G$ with ends $x_1,u_{x_2}$ and paths $x_1\dd a\dd x_2\dd u_{x_2}, x_1\dd P\dd u_{x_2}$ and $x_1\dd v_{x_1}\dd u_{x_2}$, again a contradiction (see Figure~\ref{fig:thetaonpath} bottom). This completes the proof of Lemma~\ref{lem:getalignment}.
\end{proof}

We apply Lemmas~\ref{lem:nocommon&good} and \ref{lem:getalignment} to take the main step in the proof of Theorem~\ref{thm:difftodiff}. This is where two of the three applications of Theorem~\ref{banana} show up (and the third one will appear at the beginning of the proof of Theorem~\ref{thm:difftodiff}).

\begin{lemma}\label{lem:palanquintoalignment}   

For all integers $s,l,t\geq 1$, there exist integers $\sigma=\sigma(l,s,t)\geq 1$ and $\lambda=\lambda(l,s,t)\geq 1$ with the following property. Let $G$ be a (theta, prism, even wheel, $K_t$)-free graph. Let $(a,S,\mathcal{L})$ be an $(\sigma,\lambda)$-palanquin in $G$. For every $L\in \mathcal{L}$, fix an end $x_L$ of $L$. Then there exist $S'\subseteq S$ with $|S'|=s$, an $l$-subset $\mathcal{L}'$ of $\mathcal{L}$ and a bijection $\pi:S'\rightarrow [s]$ such that the following hold.

\begin{enumerate}[\rm (a)]
    \item\label{lem:palanquintoalignment_a} For every $L\in \mathcal{L}'$, the quadruple $(S',L,x_L, \pi)$ is an $s$-alignment in $G$.
     \item\label{lem:palanquintoalignment_b} For every $x\in S'$ and every $L\in \mathcal{L}'$, the vertex $x$ is $L$-ugly.

 \end{enumerate} 
\end{lemma}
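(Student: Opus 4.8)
The plan is to trim the palanquin down using Lemmas~\ref{lem:nocommon&good} and~\ref{lem:getalignment}, and then to invoke Theorem~\ref{banana} to eliminate the one configuration that would violate conclusion~\ref{lem:palanquintoalignment_b}.

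First I would fix auxiliary integers (to be determined by working backwards through the argument) and choose $\sigma$ and $\lambda$ large enough that Lemma~\ref{lem:nocommon&good}, applied to the $(\sigma,\lambda)$-palanquin $(a,S,\mathcal L)$, produces $S_1\subseteq S$ and $\mathcal L_1\subseteq\mathcal L$ --- of any prescribed sizes, in particular with $m:=|S_1|\ge\max(s,2)$ and $|\mathcal L_1|$ as large as needed --- such that no two vertices of $S_1$ share a neighbor in any $L\in\mathcal L_1$, and, for each $L\in\mathcal L_1$, either every vertex of $S_1$ is $L$-bad or every vertex of $S_1$ is $L$-ugly. Pigeonholing over these two alternatives gives $\mathcal L_2\subseteq\mathcal L_1$, still large, on which the alternative is uniform; call this the \emph{ugly case} or the \emph{bad case}. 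In either case, for each $L\in\mathcal L_2$ the triple $(a,S_1,\{L\})$ is an $(m,1)$-palanquin satisfying the hypotheses of Lemma~\ref{lem:getalignment} (no common neighbors, uniform type), so that lemma yields a bijection $\pi_L\colon S_1\to[m]$ making $(S_1,L,x_L,\pi_L)$ an alignment. There are at most $m!$ candidate bijections, so a last pigeonhole produces $\mathcal L_3\subseteq\mathcal L_2$, still large, and one bijection $\pi_0$ with $\pi_L=\pi_0$ for all $L\in\mathcal L_3$. Relabel $S_1=\{p_1,\dots,p_m\}$ so that $\pi_0(p_i)=i$; then along every $L\in\mathcal L_3$ (traversed from $x_L$) all neighbors of $p_1$ precede all neighbors of $p_2$, which precede all neighbors of $p_3$, and so on.

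In the ugly case the lemma follows immediately: take $S'=\{p_1,\dots,p_s\}$ with $\pi=\pi_0|_{S'}$ and any $l$-subset $\mathcal L'\subseteq\mathcal L_3$; the alignment axioms for $(S',L,x_L,\pi)$ are inherited from those for $S_1$, and every vertex of $S'$ is $L$-ugly by construction, giving conclusions~\ref{lem:palanquintoalignment_a} and~\ref{lem:palanquintoalignment_b}. Thus the entire statement reduces to proving that the bad case cannot occur, and this is where I expect the real work to lie. In the bad case each $N_L(p_i)$ is a clique of a path, hence has exactly two (adjacent) vertices $u_{i,L},u_{i,L}'$, say with $u_{i,L}$ nearer to $x_L$; the edges $u_{i,L}u_{i,L}'$ are pairwise vertex-disjoint and, along $L$, the one of $p_1$ precedes the one of $p_2$. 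Let $R_L$ be the induced path from $p_1$ to $p_2$ obtained by going from $p_1$ to $u_{1,L}'$, then along $L$ to $u_{2,L}$, then to $p_2$. The paths $\{R_L:L\in\mathcal L_3\}$ are pairwise internally disjoint (their interiors lie on the pairwise-disjoint paths $L$), and $p_1\not\sim p_2$. So, provided $|\mathcal L_3|\ge\psi(t,\nu)$, Theorem~\ref{banana} (applicable as $G$ is (theta, prism, $K_t$)-free) supplies $L_1,\dots,L_\nu\in\mathcal L_3$ for which $\{u_{1,L_1}',\dots,u_{1,L_\nu}',p_2\}$ is stable and, for every $i<j$, $u_{1,L_i}'$ has a neighbor in $R_{L_j}^*\setminus\{u_{1,L_j}'\}$.

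It then remains to turn this forced adjacency into a theta, prism, or even wheel, contradicting $G\in\mathcal E$. The key observation is that for each $j$ the cycle $C_j$ obtained from $R_{L_j}$ by joining $p_1$ and $p_2$ through $a$ is a hole (since $a$ is anticomplete to $L_j$ and $p_1\not\sim p_2$), while for $i<j$ the vertex $u_{1,L_i}'$ lies off $C_j$ and has on $C_j$ the neighbor $p_1$ together with at least one vertex of $R_{L_j}^*\setminus\{u_{1,L_j}'\}$. If $u_{1,L_i}'$ has exactly two, non-adjacent, neighbors on $C_j$ then $C_j$ and $u_{1,L_i}'$ form a theta, and if it has an even number at least four of neighbors on $C_j$ then $(C_j,u_{1,L_i}')$ is an even wheel; to dispose of the remaining possibilities (two adjacent neighbors, or an odd number at least three) I would apply Theorem~\ref{banana} a second time to a refined subfamily of the $R_L$'s, for instance after interchanging the roles of the ends $p_1$ and $p_2$, so as to pin down the location of the forced neighbours precisely enough that one of the two good outcomes is guaranteed. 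I expect this parity bookkeeping --- setting up the second staircase so that it forces an even wheel or a clean theta rather than an innocuous odd wheel --- to be the main obstacle; the rest is pigeonholing and routine path surgery.
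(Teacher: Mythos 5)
Your setup (Lemma~\ref{lem:nocommon&good}, Lemma~\ref{lem:getalignment}, pigeonholing to get a uniform type and a uniform bijection, disposing of the ugly case immediately) matches the paper's opening moves, and your plan to apply Theorem~\ref{banana} to paths built from the aligned bad neighbourhoods is also the right instinct. But the proof as written has a genuine gap, and it is exactly where you flag it: the ``parity bookkeeping'' that rules out the bad case is not done, and the scheme you sketch for it is unlikely to close on its own terms.

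Two concrete problems. First, you work with only two vertices $p_1,p_2$ of $S_1$ in the bad case, but the paper's argument essentially requires a \emph{third} vertex $z$ sandwiched between them along each path (it takes $|S_1|=s+2$ precisely so that three distinguished vertices $x=\pi(1)$, $z=\pi(2)$, $y=\pi(3)$ are available). The middle vertex is what makes everything work: $z$ is adjacent to $a$ and has exactly two neighbours on $L$ (bad case), so $z$ is $C_L$-ugly for the hole $C_L=a\dd x\dd u_L\dd L\dd v_L\dd y\dd a$, and then Theorem~\ref{thm:evenwheeltheta} applied to the adjacent pair $z$, $z^2_{L_3}$ (which share no neighbour on $C_L$) immediately forces $z^2_{L_3}$ to be $C_L$-bad, i.e.\ to have exactly two adjacent neighbours on $C_L$. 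Your two-vertex configuration does not furnish an adjacent pair off the hole to feed into Theorem~\ref{thm:evenwheeltheta}, which is why you end up trying to do raw parity casework on $N_{C_j}(u'_{1,L_i})$. Second, even after a second application of Theorem~\ref{banana}, the paper's final contradiction is a \emph{prism} built from two triangles $z^2_{L_3}w^1_{L_1}w^2_{L_1}$ and $w^2_{L_2}ww'$, not a theta or even wheel as you anticipate; aiming only for a theta or even wheel is the wrong target, and the ``odd wheel'' loose end you mention is handled in the paper not by further parity surgery but by Theorem~\ref{thm:evenwheeltheta} up front and a prism at the end. So the missing ingredient is twofold: introduce a third aligned vertex to unlock Theorem~\ref{thm:evenwheeltheta}, and steer the second Theorem~\ref{banana} application toward a prism rather than toward controlling the parity of a wheel.
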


{%
\begin{figure}[t!]
  \centering
  \includegraphics[scale=0.7]{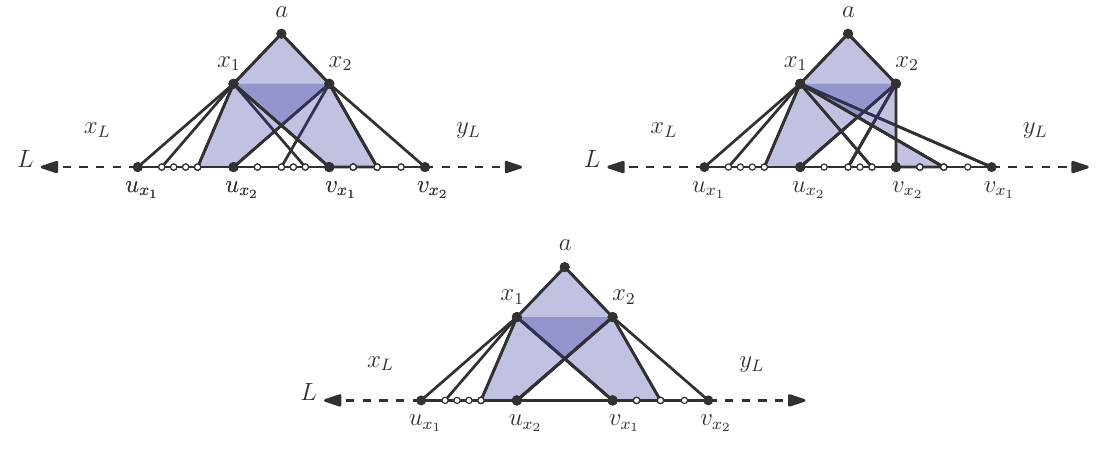}
  \captionof{figure}{Proof of Lemma~\ref{lem:getalignment}. Top left: the case $i=1$. Top right: the case $i=2$.}
  \label{fig:thetaonpath}
  \end{figure}
}
    
\begin{proof}
We begin with defining the values of $\sigma$ and $\lambda$. Let $\psi(\cdot,\cdot)$ be as in Theorem~\ref{banana} and let $o(\cdot,\cdot,\cdot,\cdot)$ be as in Lemma~\ref{ramsey2}. The two successive applications of Theorem~\ref{banana} are already signaled by the two nested appearances of $\psi(\cdot,\cdot)$ below. Let
$$\psi_1=\psi_1(t)=\psi(t,2);$$
$$o_1=o_1(t)=o(\psi_1,4,3,t);$$
$$\psi_2=\psi_2(t)=\psi(t,o_1+1);$$
$$o_2=o_2(t)=o(\psi_2,2,3,t);$$
Also, let $p=p(s)=s+2$ and
let $q=q(l,s,t)=(l+o_2)(s+2)!$.
We claim that $\sigma=\sigma(l,s,t)=2^qp+2q$ and $\lambda=\lambda(l,s,t)=q(2^qp+2q)^2t^3$
satisfy Lemma~\ref{lem:palanquintoalignment}. 

Suppose not. Due to the choice of $\sigma$ and $\lambda$, we can apply Lemma~\ref{lem:nocommon&good} to $(a,S,\mathcal{L})$, obtaining $S_1\subseteq S$ with $|S_1|=s+2$ and $\mathcal{L}_1\subseteq \mathcal{L}$ with $|\mathcal{L}_1|=(l+o_2)(s+2)!$ such that for every $L\in \mathcal{L}_1$, no two vertices in $S_1$ have a common neighbor in $L$, and either $x$ is $L$-bad for all $x\in S_1$, or $x$ is $L$-ugly for all $x\in S_1$. This, combined with Lemma~\ref{lem:getalignment}, implies that for every $L\in \mathcal{L}_1$, there exists a bijection $\pi_L:S_1\rightarrow [s+2]$ such that $(S_1,L,x_L,\pi_L)$ is an $(s+2)$-alignment in $G$. We deduce that:

\sta{\label{st:getfit} There exists $\mathcal{L}_2\subseteq \mathcal{L}_1$ with $|\mathcal{L}_2|=o_2(s+2)!$ such that for every $x\in S_1$ and every $L\in \mathcal{L}_1$, $x$ is $L$-bad.}

Suppose not. From $|\mathcal{L}_1|=(l+o_2)(s+2)!$, it follows that there exists $\mathcal{L}'_2\subseteq \mathcal{L}_1$ with $\mathcal{L}'_2=l(s+2)!$ such that for every $x\in S_1$ and every $L\in \mathcal{L}'_2$, $x$ is $L$-ugly. Since $|S_2|=s+2$, this in turn implies that there exists $\mathcal{L}'\subseteq \mathcal{L}'_2$ with $|\mathcal{L}'|=l$ as well as a bijection  $\pi':S_2\rightarrow [s+2]$, such that $\pi_L=\pi'$ for all $L\in \mathcal{L}'$. Let $S'=\pi'([s])$ and let $\pi=\pi'|_{[s]}$. Then for every $L\in \mathcal{L}'$, $(S',L,x_L,\pi)$ is an $s$-alignment in $G$, and so $S',\mathcal{L}'$ satisfy \ref{lem:palanquintoalignment}\ref{lem:palanquintoalignment_a}. Also, since $S'\subseteq S_1$ and $\mathcal{L}'\subseteq \mathcal{L}'_2$, it follows that $S',\mathcal{L}'$ satisfy \ref{lem:palanquintoalignment}\ref{lem:palanquintoalignment_b}. This violates the assumption that Lemma~\ref{lem:palanquintoalignment} fails to hold for our chosen values of $\sigma$ and $\lambda$, hence proving \eqref{st:getfit}.

\medskip

Let $\mathcal{L}_2$ be as in \eqref{st:getfit}. Since $|S_1|=s+2$ and $|\mathcal{L}_2|=o_2(s+2)!$, it follows that there exists $\mathcal{L}_3\subseteq \mathcal{L}_2$ with $|\mathcal{L}_3|=o_2$ as well as a bijection  $\pi:S_1\rightarrow [s+2]$, such that $\pi_L=\pi$ for all $L\in \mathcal{L}_3$. Let us write $x=\pi(1), z=\pi(2)$ and $y=\pi(3)$ (this is possible because $s+2\geq 3$).

For every $L\in \mathcal{L}_3$, traversing $L$ starting at $x_L$, let $u_L$ be the last neighbor of $x$ in $L$, let $z^1_L, z^2_L$ be the first and the last neighbor of $z$ in $L$, respectively, and let $v_L$
be first neighbor of $y$ in $L$. By \eqref{st:getfit}, $u_L,z_L^1,z_L^2$ and $v_L$ are all distinct, appearing on $L$ in this order, and $N_L(z)=\{z^1_L,z_L^2\}$ is a clique in $G$. Since $G$ is $(K_{3,3},K_t)$-free and from the choice of $o_2$, it follows that we can apply Lemma~\ref{ramsey2} to the sets $\{\{z^1_L,z_L^2\}:L\in \mathcal{L}_3\}$ and show that:

\sta{\label{st:roundoneanti} There exists $\mathcal{L}'_3\subseteq \mathcal{L}_3$ with $|\mathcal{L}'_3|=\psi_2$ such that for all distinct $L,L'\in \mathcal{L}'_3$, $\{z^1_L,z_L^2\}$ is anticomplete to $\{z^1_{L'},z_{L'}^2\}$.}

Next, we launch the first application of Theorem~\ref{banana}. Note that $\{z\dd z_L^2\dd L\dd v_L\dd y:L\in \mathcal{L}'_3\}$ is a collection of $\psi_2$ pairwise internally disjoint paths in $G$ between non-adjacent vertices $z$ and $y$. Consequently, due to the choice of $\psi_2$, we can apply Theorem~\ref{banana} to this collection, and deduce that there exists $L_3\in \mathcal{L}'_3$ as well as $\mathcal{L}_4\subseteq \mathcal{L}'_3\setminus \{L_3\}$ with $|\mathcal{L}_4|=o_1$ such that:
\begin{itemize}
    \item $\{z^2_L:L\in \mathcal{L}_4\}\cup \{z^2_{L_3},y\}$ is a stable set in $G$ (though this is already guaranteed by \eqref{st:roundoneanti}); and
    \item for all $L\in \mathcal{L}_4$, $z^2_{L_3}$ has a neighbor in the interior of $z_L^2\dd L\dd v_L\dd y$.
    \end{itemize}
    
   For each $L\in \mathcal{L}_4$, traversing $z_L^2\dd L\dd v_L\dd y$ from $z^2_L$ to $y$, let $w^1_L,w^2_L$ be the first and the last neighbors of $z^2_{L_3}$ in $z_L^2\dd L\dd v_L\dd y$, respectively; it follows that $\{w^1_L,w^2_L\}\cap\{z^2_L,y\}=\emptyset$ and  $z^1_L,z^2_L,w^1_L,w^2_L$ appear on $L$ in this order. For every $L\in \mathcal{L}_4$, let $C_L$ denote the hole $a\dd x\dd u_L\dd L\dd v_L\dd y\dd a$ in $G$. We deduce that:

    \sta{\label{st:round2fit} For every $L\in \mathcal{L}_4$, $z^2_{L_3}$ is $C_L$-bad.  More explicitly, $w^1_L$ and $w^2_L$ are distinct and adjacent, and we have $N_{C_L}(z^2_{L_3})=\{w^1_L,w^2_L\}$.}

To see this, note that $z$ and $z^2_{L_3}$ are two adjacent vertices in $G\setminus C_L$, each with at least one neighbor in $C_L$. In fact, we have $N_{C_L}(z)=\{a,z^1_L,z^2_L\}$, and so $z$ is ${C_L}$-ugly. Since $a$ is anticomplete to $L_3$ and from \eqref{st:roundoneanti}, it follows that $z^2_{L_3}$ is anticomplete to $\{a,z^1_L,z^2_L\}=N_{C_L}(z)$. Thus, $z$ and $z^2_{L_3}$ have no common neighbor in $C_L$. So by Theorem~\ref{thm:evenwheeltheta}, $z^2_{L_3}$ is $C_L$-bad. This proves \eqref{st:round2fit}.

\medskip

Furthermore, since $G$ is $(K_{3,3},K_t)$-free and from the choice of $o_1$, we can apply Lemma~\ref{ramsey2} to the sets $\{\{w^1_L,w_L^2,z^1_L,z_L^2\}:L\in \mathcal{L}_4\}$, and deduce that:

\sta{\label{st:roundtwoanti} There exists $\mathcal{L}'_4\subseteq \mathcal{L}_4$ with $|\mathcal{L}'_4|=\psi_1$ such that for all distinct $L,L'\in \mathcal{L}'_4$, $\{w^1_L,w_L^2,z^1_L,z_L^2\}$ is anticomplete to $\{w^1_{L'},w_{L'}^2,z^1_{L'},z_{L'}^2\}$.}

Now, note that $\{z^2_{L_3}\dd w_L^2\dd L\dd v_L\dd y:L\in \mathcal{L}'_4\}$ is a collection of $\psi_1$ pairwise internally disjoint paths in $G$ between non-adjacent vertices $z^2_{L_3}$ and $y$. Together with the choice of $\psi_1$, this allows an application of Theorem~\ref{banana} to $\{z^2_{L_3}\dd w_L^2\dd L\dd v_L\dd y:L\in \mathcal{L}'_4\}$. We obtain two distinct paths $L_1,L_2\in \mathcal{L}'_4$ such that
\begin{itemize}
    \item $\{w^2_{L_1},w^2_{L_2},y\}$ is a stable set in $G$ (though this already follows from \eqref{st:roundtwoanti}); and
    \item the vertex $w^2_{L_2}$ has a neighbor in the interior of $w_{L_1}^2\dd L_1\dd v_L\dd y$.
    \end{itemize}
    
   Traversing $w_{L_1}^2\dd L_1\dd v_L\dd y$ from $w_{L_1}^2$ to $y$, let $w,w'$ be the first and the last neighbors of $w^2_{L_2}$ in $w_{L_1}^2\dd L_1\dd v_L\dd y$, respectively; it follows that $\{w,w'\}\cap\{w_{L_1}^2,y\}=\emptyset$ and  $w_{L_1}^1,w_{L_1}^2,w,w'$ appear on $L_1$ in this order. In addition, we have:
{
\begin{figure}[t!]
\centering

\includegraphics[scale=0.8]{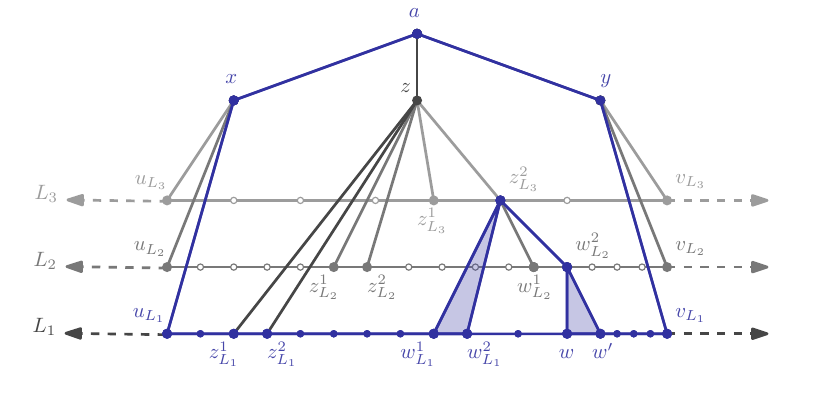}

\caption{Proof of Lemma~\ref{lem:palanquintoalignment}.}
\label{fig:lem:palanquintoalignment}
\end{figure}
}

    \sta{\label{st:round3fit} The vertex $w^2_{L_2}$ is $C_{L_1}$-bad.  More precisely, $w$ and $w'$ are distinct and adjacent, and we have $N_{C_{L_1}}(w^2_{L_2})=\{w,w'\}$.}

Let $C_1=a\dd z\dd z^2_{L_1}\dd L_1\dd v_L\dd y\dd a$; then $C_1$ is a hole in $G$. Note that $w^2_{L_2}$ and $z^2_{L_3}$ are two adjacent vertices in $G\setminus C_1$, each with at least one neighbor in $C_1$. In fact, we have $N_{C_1}(z^2_{L_3})=\{z,w^1_{L_1},w^2_{L_1}\}$, and so $z^2_{L_3}$ is ${C_1}$-ugly. This, along with \eqref{st:roundtwoanti}, implies that $w^2_{L_2}$ is anticomplete to $\{z,w^1_{L_1},w^2_{L_1}\}=N_{C_1}(z^2_{L_3})$. It follows that $w^2_{L_2}$ and $z^2_{L_3}$ have no common neighbor in $C_1$. But then by Theorem~\ref{thm:evenwheeltheta}, $w^2_{L_2}$ is $C_1$-bad. More precisely, $w$ and $w'$ are distinct and adjacent, and we have $N_{C_{1}}(w^2_{L_2})=\{w,w'\}$. Since $C_{L_1}\setminus C_1=x\dd u_{L_1}\dd L_1\dd z^1_{L_1}$, it remains to show that $w^2_{L_2}$ is anticomplete to $u_{L_1}\dd L_1\dd z^1_{L_1}$. Suppose not. Recall that by \eqref{st:roundtwoanti}, $a$ and $w^2_{L_2}$ are not adjacent in $G$. Consequently, there is a path $Q$ of length at least two in $G$ from $a$ to $w^2_{L_2}$ such that $Q^*$ is contained in the interior of $a\dd x\dd  u_{L_1}\dd L_1\dd z^1_{L_1}$. But then in view of \eqref{st:round2fit}, there is a theta in $G$ with ends $a,w^2_{L_2}$ and paths $a\dd z\dd z^2_{L_3}\dd w^2_{L_2}, a\dd y\dd v_{L_1}\dd L_1\dd w'\dd w^2_{L_2}$ and $Q$, a contradiction. This proves \eqref{st:round3fit}.

\medskip
Finally, by \eqref{st:round2fit} and \eqref{st:round3fit}, there is a prism in $G$ with triangles $z^2_{L_3}w^1_{L_1}w^2_{L_1}$ and $w^2_{L_2}ww'$ and paths $z^2_{L_3}\dd w^2_{L_2}$, $w^2_{L_1}\dd L_1\dd w$ and $w^1_{L_1}\dd L_1\dd u_{L_1}\dd x\dd a\dd y\dd v_{L_1}\dd L_1\dd w'$ (see Figure~\ref{fig:lem:palanquintoalignment}), a contradiction. This completes the proof of Lemma~\ref{lem:palanquintoalignment}.
\end{proof}

We are now in a position to prove Theorem~\ref{thm:difftodiff}:
    
    \begin{proof}[Proof of Theorem~\ref{thm:difftodiff}]
    Let $\sigma=\sigma(1,4,t)$ and $\lambda=\lambda(1,4,t)$ be as in Lemma~\ref{lem:palanquintoalignment}. Let $\psi=\psi(t,\sigma+\lambda)$ be as in Theorem~\ref{banana} and let $o=o(\psi,d,3,t)$ be as in Lemma~\ref{ramsey2}. Our goal is to show that $\kappa=\kappa(d,t,w)=o+w$ satisfies \ref{thm:difftodiff}.
    
    Suppose not. Let $G$ be a (theta, prism, even wheel, $K_t$)-free graph, let $(a,x,y,\mathcal{W})$ be a $\kappa$-kaleidoscope in $G$, and let $z\in V(G)$ be $1$-mirrored by $(a,x,y,\mathcal{W})$. Let $\mathcal{W}'\subseteq \mathcal{W}$ be the set of all paths $W\in \mathcal{W}$ for which $z$ has at least $d$ neighbors in $W$. It follows that $|\mathcal{W}'|<w$, and so there exists $\mathcal{W}_0\subseteq \mathcal{W}$ with $|\mathcal{W}_0|=o$ such that for every $W\in \mathcal{W}_0$, $z$ has less than $d$ neighbors in $W$.

     For every $W\in \mathcal{W}_0$, traversing $W$ from $x$ to $y$, let $x_W$ be the last neighbor of $x$ in $W$, let $u^1_W, u^2_W$ be the first and the last neighbor of $z$ in $W$, respectively, and let $y_W$
be the first neighbor of $y$ in $W$. It follows that the vertices $x,x_W, u^1_W, u^2_W, y_W, y$ appear on $W$ in this order, and $u^1_W, u^2_W$ are the only two vertices among them which may be the same. Since $G$ is $(K_{3,3},K_t)$-free and from the choice of $o$, it follows that we can apply Lemma~\ref{ramsey2} to the sets $\{N_W(z):W\in \mathcal{W}_0\}$, and show that:

\sta{\label{st:danti} There exists $\mathcal{W}_1\subseteq \mathcal{W}_0$ with $|\mathcal{W}_1|=\psi$ such that for all distinct $W,W'\in \mathcal{W}_0$, $N_W(z)$ is anticomplete to $N_{W'}(z)$.}

Next, note that $\{z\dd u_W^1\dd W\dd x_W\dd x:W\in \mathcal{W}_1\}$ is a collection of $\psi$ pairwise internally disjoint paths in $G$ between non-adjacent vertices $z$ and $x$. Consequently, by the choice of $\psi$, we can apply Theorem~\ref{banana} to this collection, and deduce that there exist two disjoint subsets $\mathcal{W}_2$ and $\mathcal{W}_3$ of $\mathcal{W}_1$ with $|\mathcal{W}_2|=\sigma$ and $|\mathcal{W}_3|=\lambda$, such that:
\begin{itemize}
    \item $\{u^1_W:W\in \mathcal{W}_2\cup \mathcal{W}_3\}\cup \{x\}$ is a stable set in $G$ (though this is already guaranteed by \eqref{st:danti} and \ref{M3} as $z$ is $1$-mirrored by $(a,x,y,\mathcal{W})$); and
    \item for every $W\in \mathcal{W}_2$ and every $W'\in \mathcal{W}_3$, $u^1_{W}$ has a neighbor in the interior of $L_{W'}=u_{W'}^1\dd W'\dd x_{W'}\dd x$.
    \end{itemize}
    
   Let $S=\{u^1_W:W\in \mathcal{W}_2\}$ and let $\mathcal{L}=\{L^*_{W'}:W'\in \mathcal{W}_3\}$. Then $(z,S,\mathcal{L})$ is a $(\sigma,\lambda)$-palanquin in $G$. This, together with the choices of $\sigma$ and $\lambda$, allows for an application of Lemma~\ref{lem:palanquintoalignment}. We deduce that there exist $W_1,W_2,W_3,W_4\in \mathcal{W}_2$ and $W'\in \mathcal{W}_3$ such that the following hold.

\begin{itemize}
    \item $(\{u^1_{W_i}:i\in [4]\},L^*_{W'},x_{W'},\pi)$ is a $4$-alignment in $G$, where $\pi(u^1_{W_i})=i$ for all $i\in [4]$.
     \item For every $i\in [4]$, $u^1_{W_i}$ is $L_{W'}$-ugly.
 \end{itemize}

\sloppy Now, for each $i\in [4]$, traversing $L_{W'}^*$ starting at $x_{W'}$, let $v_i,v'_i$ be the first and the last neighbors of $u^1_{W_i}$ in $L_{W'}^*$, respectively; it follows that $\{v_i,v'_i\}\cap\{x_{W'},u_{W'}^1\}=\emptyset$ and  the vertices $x_{W'},v_1,v'_1,v_2,v'_2,v_3,v'_3,v_4,v'_4, u_{W'}^1, u_{W'}^2,y_{W'}$ appear on $W'$ in this order. Let $C=a\dd x\dd x_{W'}\dd W'\dd y_{W'}\dd y\dd a$. Then $C$ is a hole in $G$ and $u^1_{W_1}$ and $z$ are two adjacent vertices in $G\setminus C$, each with a neighbor in $C$. Also, $u^1_{W_1}$ is $L_{W'}$-ugly, and so $C$-ugly, and by \eqref{st:danti}, $u^1_{W_1}$ and $z$ have no common neighbor in $C$. This, combined with Theorem~\ref{thm:evenwheeltheta}, implies that $z$ is $C$-bad.  More precisely, $a$ and $z$ are not adjacent (though we do not use this), and $N_{C}(z)=N_{W'}(z)=\{u^1_{W'},u^2_{W'}\}$ is a  two-vertex clique in $G$. We further deduce that:
{
\begin{figure}[t!]
\centering

\includegraphics[scale=0.6]{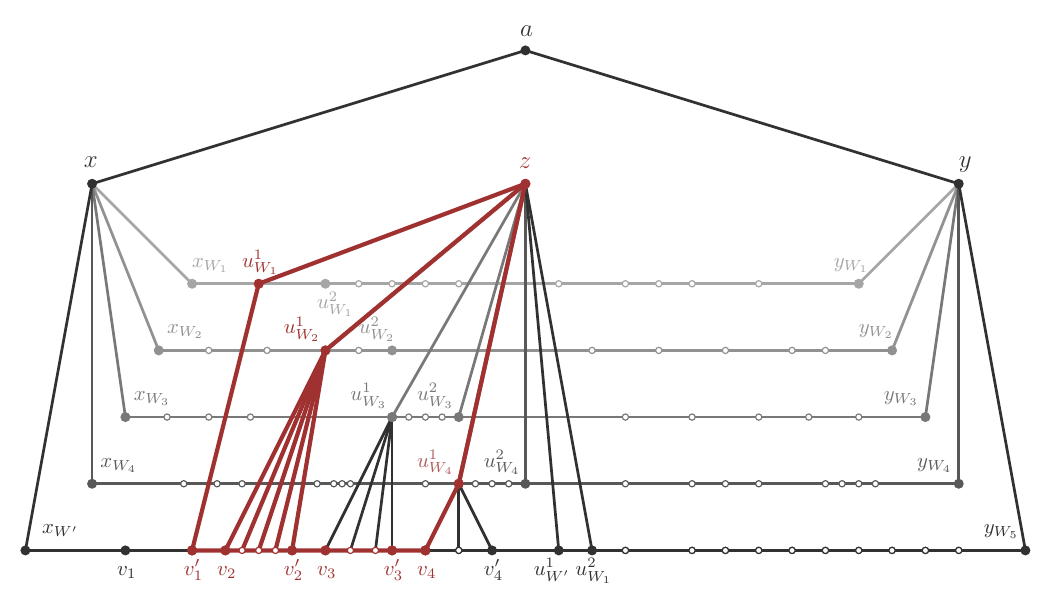}

\caption{Proof of \eqref{st:getneighborsjump}.}
\label{fig:diffractwheel}
\end{figure}
}
\sta{\label{st:getneighborsjump}For every $i\in \{2,3\}$, $u^1_{W_i}$ has a neighbor in the interior of $u^2_{W'}\dd W'\dd y_{W'}\dd y$.}

Suppose not. Then by \eqref{st:danti}, $u^1_{W_i}$ is anticomplete to $u^1_{W'}\dd u^2_{W'}\dd W'\dd y_{W'}$. Let $C'$ denote the hole $z\dd u^1_{W_1}\dd v'_1\dd W'\dd v_4\dd u^1_{W_4}\dd z$. Then we have $N_{C'}(u^1_{W_i})=N_{L_{W'}}(u^1_{W_i})\cup \{z\}=N_{C}(u^1_{W_i})\cup \{z\}$. Also, $u^1_{W_i}$ is $C$-ugly, as it is $L_{W'}$-ugly. This, along with the fact that $C\cup \{u^1_{W_i}\}$ is not a theta in $G$ and $(C,u^1_{W_i})$ is not an even wheel in $G$, implies that $|N_C(u^1_{W_i})|$ is an odd integer which is at least three. But then $(C',u^1_{W_i})$ is an even wheel in $G$, a contradiction (see Figure~\ref{fig:diffractwheel}). This proves \eqref{st:getneighborsjump}.

\medskip

To finish the proof, note that by \eqref{st:getneighborsjump}, there exists a path $P$ in $G$ from $u^1_{W_2}$ to $u^1_{W_3}$ such that $P^*$ is contained in the interior of $u^2_{W'}\dd W'\dd y_{W'}\dd y$. But now there is a theta in $G$ with ends $u^1_{W_2}, u^1_{W_3}$ and paths $u^1_{W_2}\dd z\dd u^1_{W_3}, u^1_{W_2}\dd v'_2\dd W'\dd v_3\dd u^1_{W_3}$ and $P$, a contradiction (see Figure~\ref{fig:diffracttheta}). This completes the proof of Theorem~\ref{thm:difftodiff}.
\end{proof}
{
\begin{figure}[t!]
\centering

\includegraphics[scale=0.6]{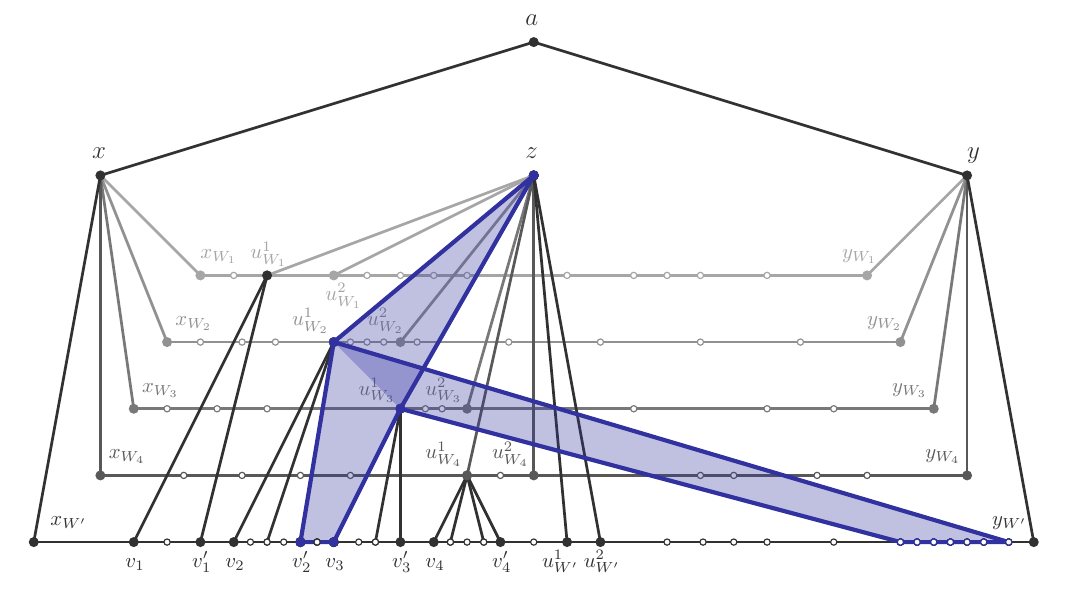}

\caption{Proof of Theorem~\ref{thm:difftodiff}.}
\label{fig:diffracttheta}
\end{figure}
}
\section{Blurry $2$-trees}\label{sec:kaleidoscope}

Here we develop the inductive procedure discussed in Subsection~\ref{subs:outline} for growing a $2$-tree while maintaining its vertex set, all along, mirrored enough by a kaleidoscope of proportionate size. In particular, this process starts with a pair of adjacent vertices and then repeatedly adds carefully chosen common neighbors. Let us first ensure that a valid choice of the two initial vertices is available under the right circumstances:

\begin{theorem}\label{thm:kaleidoscopeexists}
    For all integers $d,t,w\geq 1$, there exists $\zeta=\zeta(d,t,w)\geq 1$ with the following property. Let $G$ be a (theta, prism, $K_t$)-free graph, let $a,b\in V(G)$ be distinct and non-adjacent and let $\mathcal{P}$ be a collection of pairwise internally disjoint paths in $G$ from $a$ to $b$ with $|\mathcal{P}|\geq \zeta$.
    Then there exists a $w$-kaleidoscope $(a,x,y,\mathcal{W})$ in $G$ as well as a clique $Z_0$ in $G$ with $|Z_0|=2$, such that:
    \begin{enumerate}[{\rm (a)}]
        \item \label{thm:kaleidoscopeexists_a} $Z_0$ is $d$-mirrored by $(a,x,y,\mathcal{W})$; and
        \item \label{thm:kaleidoscopeexists_a} some vertex in $Z_0$ is adjacent to $a$.
    \end{enumerate}
    \end{theorem}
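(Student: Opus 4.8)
The plan is to start from the collection $\mathcal{P}$ of many internally disjoint $a$–$b$ paths and, through a chain of cleaning steps, distill two adjacent vertices that play together the role of $Z_0$, while the bulk of the remaining paths gets repurposed into the kaleidoscope $\mathcal{W}$. First I would set up the numerical parameters by working backwards: fix a target size $w$ for the final kaleidoscope and the value $d$, invoke Theorem~\ref{thm:difftodiff} to get the threshold $\kappa=\kappa(d,t,w)$ that converts a $1$-mirroring vertex into a $d$-mirroring one on a sub-kaleidoscope of size $w$, then invoke Theorem~\ref{banana} (applied to the paths of $\mathcal{P}$ with their neighbors of $a$) and Lemma~\ref{ramsey2} to control anticompleteness among the near-$a$ ends. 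Setting $\zeta=\zeta(d,t,w)$ large enough in terms of $\psi(\cdot,\cdot)$, $\kappa$ and $o(\cdot,\cdot,\cdot,\cdot)$ should give enough room for all the subsequent extractions.

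The core of the argument is the following extraction. Apply Theorem~\ref{banana} to $\mathcal{P}$ with the role of ``$x_P$'' being the neighbor $x_P$ of $a$ on each $P\in\mathcal{P}$: this yields many paths $P_1,\ldots,P_\nu\in\mathcal{P}$ such that $\{x_{P_1},\ldots,x_{P_\nu},b\}$ is a stable set and, for $i<j$, $x_{P_i}$ has a neighbor in $P_j^*\setminus\{x_{P_j}\}$. Now take $z:=x_{P_1}$; it is adjacent to $a$, and it is anticomplete to $b$ and to all the $x_{P_j}$'s, and it has at least one neighbor in the interior of every later path $P_j$. Designate $x:=a$'s other structural role carefully: I would actually build the kaleidoscope with ``$a$-vertex'' being $b$ itself — that is, take the common endpoint $b$ of the paths as the vertex $a$ of the kaleidoscope, and choose $x,y$ to be the two neighbors of $z$ forced by the stability/adjacency pattern. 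Concretely, pick two further paths among the $P_j$ ($j\geq 2$) along which $z$ has a neighbor; splitting each such path at (say) the first neighbor of $z$ produces, together with $z$, a path through $z$; but to get the three-vertex path $x\dd a\dd y$ of (K1) I would instead let $a$ (in the kaleidoscope sense) be $z$ itself, with $x,y$ two of its neighbors on two selected paths, and let $\mathcal{W}$ consist of the portions of the remaining paths joined up through $b$. One then has to verify (K1)–(K3) and (M1)–(M3) for the singleton $\{z'\}$ where $z'$ is a second vertex adjacent to $z$ — the natural candidate being $a$ (the original common endpoint) or another $x_{P_i}$ — checking in particular that this $z'$ has at least one ($1$-mirroring) neighbor in each kept hole, which is exactly the ``$x_{P_i}$ has a neighbor in $P_j^*$'' conclusion of Theorem~\ref{banana} re-applied, or follows by another round of Ramsey cleaning via Lemma~\ref{ramsey2} to make the unwanted adjacencies vanish.

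Once a $1$-mirrored vertex sitting in a $\kappa$-kaleidoscope is in hand, Theorem~\ref{thm:difftodiff} upgrades it directly to a $d$-mirrored vertex on a sub-kaleidoscope $(a,x,y,\mathcal{W}')$ with $|\mathcal{W}'|=w$; applying it once to each of the two vertices of $Z_0$ in turn (passing to a further sub-kaleidoscope the second time) gives that the whole clique $Z_0$ of size $2$ is $d$-mirrored, which is (a), and by construction one of the two vertices is adjacent to $a$, which is (b). Condition (M2) — that $a$ has at most one neighbor in $Z_0$ — needs a small separate check: since $|Z_0|=2$ and we have arranged one vertex of $Z_0$ to be adjacent to $a$, I would simply choose the other vertex of $Z_0$ among the $x_{P_i}$'s with $i$ large, which Theorem~\ref{banana} (or a Ramsey step) guarantees can be taken non-adjacent to $a$'s twin; alternatively one re-derives (M2) from the stability of $\{x_{P_i},b\}$ together with anticompleteness of $a=b$ to path interiors. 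The main obstacle I expect is bookkeeping: correctly identifying which vertex of the original configuration plays the kaleidoscope's ``$a$'', which pair plays $Z_0$, and threading the three separate cleaning passes (Theorem~\ref{banana}, Lemma~\ref{ramsey2}, Theorem~\ref{thm:difftodiff}) so that all of (K1)–(K3), (M1)–(M3) and the adjacency requirement (b) hold simultaneously on the final sub-kaleidoscope; the graph-theoretic content, by contrast, is essentially packaged already in Theorems~\ref{banana} and \ref{thm:difftodiff}.
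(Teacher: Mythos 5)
Your high-level plan (extract structure via Theorem~\ref{banana}, turn the paths into a kaleidoscope, use Theorem~\ref{thm:difftodiff} to upgrade $1$-mirroring to $d$-mirroring for each of the two vertices in turn) is the right shape, but the middle of your argument has concrete problems that would make it collapse.

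First, you never commit to which vertex plays the kaleidoscope's apex, and the candidates you float do not work. You suggest letting the kaleidoscope's ``$a$'' be $b$, or be $z:=x_{P_1}$, and routing the kaleidoscope paths ``through $b$.'' In the paper's proof the apex is the \emph{original} $a$, and the kaleidoscope holes do not pass through $b$ at all: they are paths $W_L=x\dd u_L\dd L\dd v_L\dd y$ living inside the interiors $R^*$ of a second family $\mathcal{R}\subseteq\mathcal{P}$ of paths, with $x,y$ and the first clique vertex $z_1$ all chosen among the near-$a$ ends $x_Q$ for $Q$ in a first family $\mathcal{Q}$. Getting (K1)--(K3) requires knowing that $x,z_1,y$ all see each $L\in\mathcal{L}$ with non-interleaving neighborhoods in the right order; this is precisely what Lemma~\ref{lem:palanquintoalignment} (the palanquin-to-alignment machinery) delivers, and your sketch omits it entirely. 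Without that ordering you cannot define $u_L,v_L$ so that $a$ is anticomplete to $W_L^*$ and $z_1$'s neighbors all lie in $W_L\setminus(N_{W_L}[x]\cup N_{W_L}[y])$, so (K3) and (M3) are not verifiable from what you write.

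Second, your proposed second vertex of $Z_0$ cannot work. You suggest taking the original $a$ or another $x_{P_i}$ as $z'$. Taking $a$ violates (M1) outright ($Z_0$ must be disjoint from $\{a\}$), and taking another $x_{P_i}$ fails because Theorem~\ref{banana} guarantees $\{x_{P_1},\ldots,x_{P_\nu},b\}$ is stable, so $z_1$ and $x_{P_i}$ are \emph{not} adjacent and $Z_0$ would not be a clique. In the paper the second vertex $z_2$ is found by a further application of Theorem~\ref{banana} to paths from $z_1$ (not from $a$) to $y$: it is a vertex $z_{L_0}$ lying inside the interior of some $W_L$, hence non-adjacent to $a$; this also makes (M2) hold automatically. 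That extraction, followed by the theta/Ramsey checks that $z_2$ avoids $\{a\}\cup N_{W}[x]\cup N_{W}[y]$ on enough $W$'s, is the genuine content missing from your outline.
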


    \begin{proof}
        Let $\psi(\cdot,\cdot)$ be as in Theorem~\ref{banana} and let $\kappa=\kappa(\cdot,\cdot,\cdot)$ be as in Theorem~\ref{thm:difftodiff}. Define $\kappa_1=\kappa(d,t,w)$,
$\psi_1=\psi(t,t^3+\kappa_1+1)$ and 
$\kappa_2=\kappa(d,t,\psi_1)$.
Let $\sigma=\sigma(\kappa_2,3,t)$ and $\lambda=\lambda(\kappa_2,3,t)$ be as in 
        be as in Lemma~\ref{lem:palanquintoalignment}. Define
$\zeta=\zeta(t,w)=\psi(t,\sigma+\lambda)$. We prove that this value of $\zeta$
satisfies \ref{thm:kaleidoscopeexists}. Let $G$ be a (theta, prism, $K_t$)-free graph, let $a,b\in V(G)$ be distinct and non-adjacent and let $\mathcal{P}$ be a collection of pairwise internally disjoint paths in $G$ from $a$ to $b$ with $|\mathcal{P}|\geq \zeta=\psi(t,\sigma+\lambda)$. For each $P\in \mathcal{P}$, let $x_{P}$ be the neighbor of $a$ in $P$ (so $x_P\neq b$). From Theorem~\ref{banana} applied to $a,b$ and $\mathcal{P}$,  we deduce that there exist disjoint subsets $\mathcal{Q}$ and $\mathcal{R}$ of  $\mathcal{P}$ with $|\mathcal{Q}|=\sigma$ and $|\mathcal{R}|=\lambda$, such that:
    \begin{itemize}
    \item $\{x_{P}:P\in \mathcal{Q}\cup \mathcal{R}\}\cup  \{b\}$ is a stable set in $G$; and
    \item for every $Q\in \mathcal{Q}$ and every $R\in \mathcal{R}$, $x_{Q}$ has a neighbor in $R^*\setminus \{x_{R}\}$.
    \end{itemize}
For every $R\in \mathcal{R}$, let $L_R=R^*\setminus \{x_{R}\}$, and let $x'_{L_{R}}$ be the unique neighbor of $x_{R}$ in $L_{R}$. Then $x'_{L_R}$ is an end of $L_R$, and the vertices $\{x'_{L_R}:R\in \mathcal{R}\}$ are pairwise distinct.

We define $S=\{x_{Q}:Q\in \mathcal{Q}\}$ and  $\mathcal{L}=\{L_R:R\in \mathcal{L}_R\}$. Note that $a$ is complete to the stable set $S\subseteq \{x_P:P\in \mathcal{P}\}$ and anticomplete to $P^*\setminus x_P$ for every $P\in \mathcal{P}$. Therefore, the triple $(a,S,\mathcal{L})$ is a $(\sigma,\lambda)$-palanquin in $G$. For every path $L\in \mathcal{L}$, fix the end $x'_L$ of $L$, chosen as above.

Since $G$ is (theta, $K_t$)-free and due to the choices of $\sigma$ and $\lambda$, we can apply Lemma~\ref{lem:palanquintoalignment} to $(a,S,\mathcal{L})$ together with $\{x'_L:L\in \mathcal{L}\}$, and obtain a stable set $S'\subseteq S$ with $|S'|=3$, $\mathcal{L}'\subseteq \mathcal{L}$ with $|\mathcal{L}'|=\kappa$, and a bijection $\pi:S'\rightarrow [3]$, such that for every $L\in \mathcal{L}'$, $(S',L,x'_L, \pi)$ is a $3$-alignment in $G$.

Let us write $x=\pi(1),z_1=\pi(2)$ and $y=\pi(3)$. For every $L\in \mathcal{L}'$, traversing $L$ starting at $x'_L$, let $u_L$ be the last neighbor of $x$ in $L$, let $z_L$ be the last neighbor of $z_1$ in $L$ and let $v_L$ be the first neighbor of $y$ in $L$. Let $W_L=x\dd u_L\dd L\dd v_L\dd y$. Then $W_L$ is a path in $G$ from $x$ to $y$ and we have $z_L\in W_L\setminus (N_{W_L}[x]\cup N_{W_L}[y])$. In particular:

\sta{\label{st:getz1} For every $L\in \mathcal{L}'$, $z_1$ is anticomplete to $N_{W_L}[x]\cup N_{W_L}[y]$ and $z_1$ has a neighbor in $W_L$ (namely $z_L$).}

From \eqref{st:getz1}, it follows that $(a,x,y,\{W_L:L\in \mathcal{L}'\})$ is a $\kappa_2$-kaleidoscope in $G$ by which $z_1$ is $1$-mirrored. This, along with the choice of $\kappa_2$ and Theorem~\ref{thm:difftodiff}, implies that there exists $\mathcal{L}_1\subseteq \mathcal{L}'$ with $|\mathcal{L}_1|=\psi_1$ such that $z_1$ is $d$-mirrored by the $\psi_1$-kaleidoscope $(a,x,y,\{W_L:L\in \mathcal{L}_1\})$.

Next, for every path $L\in \mathcal{L}_1$, let $P_L=z_1\dd z_L\dd L\dd v_L\dd y$. Then $\mathcal{P}'=\{P_L:L\in \mathcal{L}_1\}$ is a collection of $\psi_1$ pairwise internally disjoint paths in $G$ between the two non-adjacent vertices $z_1$ and $y$. By Theorem~\ref{banana}, this time applied to $z_1,y$ and $\mathcal{P}'$, there exist $L_0,L_1,\ldots, L_{t^3+\kappa}\in \mathcal{L}'$ such that:
    \begin{itemize}
    \item $\{z_{L_0},z_{L_1},\ldots, z_{L_{t^3+\kappa_1}},y\}$ is a stable set in $G$; and
    \item for all $j\in [t^3+\kappa_1]$, $z_{L_0}$ has a neighbor in $P_{L_j}^*\setminus \{z_{L_j}\}$.
    \end{itemize}
Let $z_2=z_{L_0}$. Then $z_2$ is anticomplete to $\{a,x,y\}$. In addition, we claim that:

\sta{\label{st:antitoends} The vertex $z_2$ is anticomplete to $\{u_{L_j}:j\in [t^3+\kappa_1]\}$. Moreover, there exists $I\subseteq [t^3+\kappa_1]$ with $|I|=\kappa_1$ for which $z_2$ is anticomplete to $\{v_{L_j}:j\in I\}$. Consequently, for every $j\in I$, $z_2$ is anticomplete to $\{a\}\cup N_{W_{L_j}}[x]\cup N_{W_{L_j}}[y]$, and $z_2$ has a neighbor in $W_{L_j}$.}

To see the first assertion, note that for all $j\in [t^3+\kappa_1]$, there is a path $Q_{j}$ in $G$ from $z_2$ to $y$ with $Q_{j}^*\subseteq P_{L_j}^*\setminus \{z_{L_j}\}$; in particular, $u_{L_j}$ is anticomplete to $Q^*_{j}$. Therefore, if $z_2$ is adjacent to $u_{L_j}$ for some $j\in [t^3+\kappa_1]$, then there is a theta in $G$ with ends $a,z_2$ and paths $a\dd x\dd u_{L_j}\dd z_2, a\dd z_1\dd z_2$ and $a\dd y\dd Q_{j}\dd z_2$, a contradiction. To prove the second assertion, suppose for a contradiction that there exists $I'\subseteq [t^3+\kappa_1]$ with $|I'|=t^3$ such that $z_2$ is complete to $V'=\{v_{L_j}:j\in I'\}$. Since $G$ is $K_t$-free, it follows from Theorem~\ref{classicalramsey} applied to $G[V']$ that there exists $\{v,v',v''\}\subseteq V'$ which is a stable set in $G$. But this yields a theta in $G$ with ends $z_2,y$ and paths $z_2\dd v\dd y, z_2\dd v'\dd y$ and $z_{2}\dd v''\dd y$, a contradiction.  This proves \eqref{st:antitoends}.
\medskip

Let $I$ be as in \eqref{st:antitoends}. It follows that $(a,x, y,\{W_{L_j}:j\in I\})$ is a $\kappa_1$-kaleidoscope in $G$ by which $z_1$ is $d$-mirrored and $z_{2}$ is $1$-mirrored. From the choice of $\kappa_1$ and Theorem~\ref{thm:difftodiff} applied to $(a,x, y,\{W_{L_j}:j\in I\})$ and $z_2$, we conclude that there exists $\mathcal{W}\subseteq \{W_{L_j}:j\in I\}$ with $|\mathcal{W}|=w$ such that $Z_0=\{z_1,z_2\}$ is $d$-mirrored by the $w$-kaleidoscope $(a,x,y,\mathcal{W})$. Also, $a$ is adjacent to $z_1\in Z_0$ and non-adjacent to $z_2\in Z_0$. This completes the proof of Theorem~\ref{thm:kaleidoscopeexists}.
\end{proof}

 Incidentally, there is an immediate corollary of Theorem~\ref{thm:kaleidoscopeexists} which may be of independent interest. Let $G$ be a graph and let $d\geq 1$ be an integer. We say vertex $v\in V(G)$ is \textit{$d$-substantial} if there is a hole $C$ in $G\setminus \{v\}$ such that $v$ has at least $d+1$ neighbors in $C$ and $C\setminus N_C(v)$ is not connected.
\begin{corollary}\label{cor:hubnbrfortw10}
    For all integers $d,t\geq 1$, there exists an integer $k=k(d,t)$ with the following property. Let $G$ be a (theta, prism, even wheel, $K_t$)-free graph and let $a,b\in V(G)$ be distinct and non-adjacent. Assume that no vertex in $N_G(a)$ is $d$-substantial in $G$. Then there do not exist $k$ pairwise internally disjoint paths in $G$ from $a$ to $b$.
\end{corollary}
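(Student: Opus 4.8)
The plan is to read the corollary off directly from Theorem~\ref{thm:kaleidoscopeexists}, taking the kaleidoscope width to be $w=1$; no further combinatorics is needed. I would set $k(d,t)=\zeta(d,t,1)$ with $\zeta$ as in Theorem~\ref{thm:kaleidoscopeexists} and argue by contradiction: suppose $G$, $a$, $b$ are as in the statement, no vertex of $N_G(a)$ is $d$-substantial, and yet there are $k$ pairwise internally disjoint paths in $G$ from $a$ to $b$. Since every (theta, prism, even wheel, $K_t$)-free graph is (theta, prism, $K_t$)-free, Theorem~\ref{thm:kaleidoscopeexists} applied with this $d$ and $t$ and with $w=1$ produces a $1$-kaleidoscope $(a,x,y,\{W\})$ and a two-vertex clique $Z_0$ such that $Z_0$ is $d$-mirrored by $(a,x,y,\{W\})$ and one vertex of $Z_0$ is adjacent to $a$; by \ref{M2}, exactly one vertex of $Z_0$, call it $z$, is adjacent to $a$.

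Next I would show that $z$ is $d$-substantial, which contradicts $z\in N_G(a)$. Write $W=w_0\dd\cdots\dd w_m$ with $w_0=x$ and $w_m=y$. Conditions \ref{K1} and \ref{K3} say that $a$ is adjacent to both $x$ and $y$, is anticomplete to $W^*$, and that $x\ne y$ are non-adjacent; hence $C:=w_0\dd\cdots\dd w_m\dd a\dd w_0$ is a cycle, and it is induced since its only possible chords would be edges of the path $W$ or edges from $a$ to $W^*$, of which there are none. The key point is that condition \ref{M3} forces $z$ to have at least $d\ge 1$ neighbors in $W$, all lying outside $N_W[x]\cup N_W[y]=\{x,w_1,w_{m-1},y\}$; this automatically gives $m\ge 4$ (so $C$ has length at least five and is a hole) and places $N_W(z)$ inside $\{w_2,\dots,w_{m-2}\}$. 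Since $z\notin V(C)$ by \ref{M1}, $C$ is a hole in $G\setminus\{z\}$; since $z$ is adjacent to $a\notin V(W)$, we have $N_C(z)=\{a\}\cup N_W(z)$, which has at least $d+1$ elements; and since $N_W(z)$ is a nonempty subset of $\{w_2,\dots,w_{m-2}\}$, deleting $N_C(z)$ from $C$ breaks the path $W$ with $w_0$ and $w_m$ ending up in distinct components, so $C\setminus N_C(z)$ is disconnected. Hence $z$ is $d$-substantial, the desired contradiction.

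I do not expect a real obstacle: once Theorem~\ref{thm:kaleidoscopeexists} is available the corollary is essentially a translation, and the only point that deserves a careful sentence --- namely that the cycle assembled from $W$ and $a$ is genuinely a hole (induced and of length at least four) --- is handled for free by the $d$-mirroring hypothesis, which guarantees that $z$ has a neighbor strictly in the interior of $W$, forcing $W$ to be long enough.
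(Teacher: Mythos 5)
Your proposal is correct and is essentially the paper's own proof: the paper also sets $k(d,t)=\zeta(d,t,1)$, invokes Theorem~\ref{thm:kaleidoscopeexists} with $w=1$ to get a $1$-kaleidoscope $(a,x,y,\{W\})$ and a $2$-clique $Z_0$ that is $d$-mirrored by it with $z_1\in Z_0$ adjacent to $a$, forms the hole $C=a\dd x\dd W\dd y\dd a$, and concludes $z_1$ is $d$-substantial. Your extra care in checking that $C$ is genuinely a hole (length $\geq 5$ because \ref{M3} forces a neighbor of $z$ in $W\setminus(N_W[x]\cup N_W[y])$) is a sound addition to the same argument.
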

\begin{proof}
We show that $k(d,t)=\zeta(d,t,1)$ satisfies \ref{cor:hubnbrfortw10}, where $\zeta(\cdot,\cdot,\cdot)$ comes from Theorem~\ref{thm:kaleidoscopeexists}. Suppose for a contradiction that there is a collection $\mathcal{P}$ of pairwise internally disjoint paths in $G$ from $a$ to $b$ with $|\mathcal{P}|\geq k$. By Theorem~\ref{thm:kaleidoscopeexists}, there exists a $1$-kaleidoscope $(a,x,y,\{W\})$ in $G$ as well as a clique $Z_0$ in $G$ with $|Z_0|=2$ such that $Z_0$ is $d$-mirrored by $(a,x,y,\{W\})$, and some vertex $z_1\in Z_0$ is adjacent to $a$. Let $C=a\dd x\dd W\dd y\dd a$. Then $C$ is a hole in $G$ and $z_1$ has at least $d+1$ neighbors in $C$. Also, the vertices $x,y$ belong to distinct components of $C\setminus N_C(z_1)$. But now $z_1$ is neighbor of $a$ in $G$ which is $d$-substantial, a contradiction.
\end{proof}

Back to our main theme, the $2$-trees we are about to obtain are in fact subgraphs of their host graphs, falling short of being induced by only an (annoying) notch: for a graph $G$ and a $2$-tree $\nabla$ with $|V(\nabla)|=h\geq 2$,  by a \textit{blurry copy of $\nabla$ in $G$} we mean an induced subgraph $Z$ of $G$ which in turn contains a spanning subgraph $Y$ with the following specifications.

\begin{enumerate}[(B1), leftmargin=15mm, rightmargin=7mm]
\item\label{B1} $Y$ is a $2$-tree isomorphic to $\nabla$.
\item\label{B2} Let $i,j\in [|Y|]=[|Z|]=[|V(\nabla)|]$ with $i<j$ for which $\varpi_Y(i)$ and $\varpi_Y(j)$ are adjacent in $G$ (and so in $Z$) but not in $Y$. Then $\varpi_Y(j)$ is adjacent in $G$ to both forward neighbors of $\varpi_Y(i)$ in $Y$. 
\end{enumerate}

In particular, it follows that:

\begin{observation}\label{obs:blurry}
    Let $G$ be a graph and let $\nabla$ be a $2$-tree such that $G$ there is a blurry copy of $\nabla$ in $G$. Then $G$ has a subgraph isomorphic to $\nabla$. Moreover, if $G$ is $K_4$-free, then $G$ has an induced subgraph isomorphic to $\nabla$.
\end{observation}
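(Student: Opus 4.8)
The plan is to show that the induced subgraph $Z$ provided by a blurry copy of $\nabla$ in $G$ witnesses both conclusions. The first conclusion is immediate from the definition: by \ref{B1} the spanning subgraph $Y$ of $Z$ is a $2$-tree isomorphic to $\nabla$, and since $Y$ is a subgraph of $Z$ and $Z$ is an induced subgraph of $G$, we get that $G$ has a (not necessarily induced) subgraph isomorphic to $\nabla$. So the only real content is the second conclusion, for which I will argue that when $G$ is $K_4$-free, in fact $Z=Y$, so that $Z$ itself is an induced subgraph of $G$ isomorphic to $\nabla$. Equivalently, I want to rule out the ``extra edges'' permitted by \ref{B2}: I claim there is no pair $i<j$ in $[|V(\nabla)|]$ with $\varpi_Y(i)\varpi_Y(j)\in E(G)$ but $\varpi_Y(i)\varpi_Y(j)\notin E(Y)$, since the existence of such a pair would force a $K_4$ in $G$.

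First I would dispose of the degenerate case $h:=|V(\nabla)|=2$: then $\nabla=K_2$, and $Y\cong K_2$ is a spanning subgraph of the two-vertex graph $Z$, so $Z=K_2=\nabla$ and there is nothing extra. So assume $h\geq 3$ and suppose, for a contradiction, such a pair $i<j$ exists; note $V(Z)=V(Y)$ and, since $Z=G[V(Z)]$ is induced, $E(Y)\subsetneq E(Z)$, which is what produces such a pair after relabelling so that $i<j$. Because $Y$ is a $2$-tree isomorphic to $\nabla$, its enumeration $\varpi_Y$ satisfies $\varpi_Y(h-1)\varpi_Y(h)\in E(Y)$, so from $\varpi_Y(i)\varpi_Y(j)\notin E(Y)$ we must have $i\leq h-2$. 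Hence $\varpi_Y(i)$ has exactly two forward neighbors in $Y$, say $a$ and $b$, and $\{a,b\}$ is a clique of size $2$ in $Y$; thus $a$, $b$, $\varpi_Y(i)$ are pairwise adjacent in $Y$, and so in $G$. I would then check that $\varpi_Y(i),\varpi_Y(j),a,b$ are four distinct vertices: $\varpi_Y(j)\neq\varpi_Y(i)$ as $i\neq j$, while $\varpi_Y(j)\notin\{a,b\}$ since $a,b$ are neighbors of $\varpi_Y(i)$ in $Y$ whereas $\varpi_Y(j)$ is not. Now \ref{B2} gives that $\varpi_Y(j)$ is adjacent in $G$ to both $a$ and $b$, and by hypothesis $\varpi_Y(j)$ is adjacent in $G$ to $\varpi_Y(i)$; together with the three $Y$-edges among $\{a,b,\varpi_Y(i)\}$ this shows $G[\{\varpi_Y(i),\varpi_Y(j),a,b\}]\cong K_4$, contradicting the $K_4$-freeness of $G$. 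Therefore no extra edge exists, $Z=Y\cong\nabla$, and $Z$ is the desired induced subgraph.

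I do not expect any genuine obstacle here: the statement is a bookkeeping consequence of the definitions of a $2$-tree and of a blurry copy. The only places that call for a moment of care are the handling of the small case $h=2$ and verifying that the four vertices forming the alleged $K_4$ are genuinely distinct, both of which follow directly from the fact that forward neighbors of $\varpi_Y(i)$ lie strictly after $\varpi_Y(i)$ in the enumeration and are adjacent to $\varpi_Y(i)$ in $Y$.
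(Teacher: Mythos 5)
Your proof is correct, and it fills in the details of what the paper treats as an immediate consequence of the definitions (the observation is stated after ``In particular, it follows that:'' with no proof supplied). The argument you give is exactly the one intended: the spanning $2$-tree $Y$ of the blurry copy $Z$ gives the subgraph copy of $\nabla$ directly, and any extra edge $\varpi_Y(i)\varpi_Y(j)$ of $Z$ not in $Y$ (with $i<j$) combined with property (B2) and the fact that the two forward neighbors of $\varpi_Y(i)$ together with $\varpi_Y(i)$ form a triangle in $Y$ produces a $K_4$ in $G$. Your handling of the degenerate case $h=2$, the observation that $i\leq h-2$ (so that forward neighbors exist), and the distinctness check on the four alleged $K_4$ vertices are all the right points of care and are carried out correctly.
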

Next we demonstrate, through the following lemma, the inductive step of our $2$-tree growing process. Given a $2$-tree $\nabla$ on $h+2$ vertices for $h\geq 1$ and an integer $i\in [h]$, we denote by $\nabla/i$ the $2$-tree $\nabla\setminus (\varpi_{\nabla}[i])$ on $h-i+2$ vertices where $\varpi_{\nabla/i}(j)=\varpi_{\nabla}(i+j)$ for all $j\in [h-i+2]$. It is also useful to define $\nabla/0=\nabla$.

\begin{figure}[t!]
\centering

\includegraphics[scale=0.6]{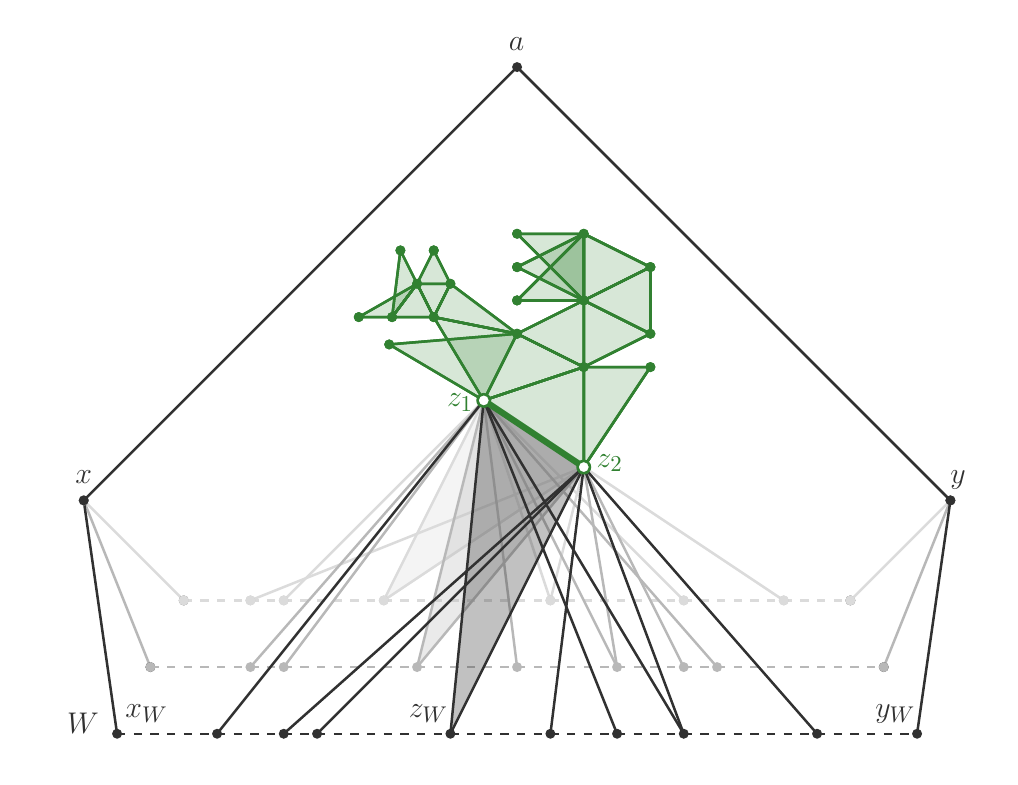}

\caption{Proof of Lemma~\ref{lem:mainblurryinduct} (dashed lines represent paths of arbitrary lengths).}
\label{fig:lhbij}
\end{figure}

\begin{lemma}\label{lem:mainblurryinduct}
    For all integers $h,t,w\geq 1$, there exists an integer $\xi(h,t,w)\geq 1$ with the following property. Let $G\in \mathcal{E}_t$ be a graph and let $\nabla$ be a $2$-tree on $h+2$ vertices. Assume that there is a blurry copy $Z'$ of $\nabla/1$ in $G$ which is $3$-mirrored by a $\xi$-kaleidoscope in $G$. Then there is a blurry copy $Z$ of $\nabla$ in $G$ which is $3$-mirrored by a $w$-kaleidoscope in $G$.
\end{lemma}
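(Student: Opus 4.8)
The plan is to add the vertex $\varpi_{\nabla}(1)$ back onto the blurry copy $Z'$ of $\nabla/1$. Let $(a,x,y,\mathcal{W})$ be a $\xi$-kaleidoscope that $3$-mirrors $Z'$, and let $Y'\cong\nabla/1$ be the spanning $2$-tree of $Z'$. The vertex $\varpi_{\nabla}(1)$ has exactly two forward neighbours in $\nabla$, and these are adjacent in $\nabla$, hence in $\nabla/1$; under the isomorphism $\nabla/1\cong Y'$ they correspond to adjacent vertices $p,q\in V(Z')$. It suffices to produce a vertex $z_0$ in the interior of some $W_0\in\mathcal{W}$ together with $\mathcal{W}'\subseteq\mathcal{W}\setminus\{W_0\}$ with $|\mathcal{W}'|=w$ such that: (i) $z_0$ is adjacent to both $p$ and $q$; (ii) $N_G(z_0)\cap V(Z')\subseteq\{p,q\}\cup(N_G(p)\cap N_G(q))$; and (iii) $z_0$ is $3$-mirrored by $(a,x,y,\mathcal{W}')$. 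Indeed, granting this, set $Z=Z'\cup\{z_0\}$ and let $Y$ be $Y'$ with $z_0$ put first in the ordering and joined exactly to $p$ and $q$; then $Y\cong\nabla$ is a $2$-tree, $Z$ is a blurry copy of $\nabla$ (the new instances of (B2) follow from (ii), the old ones are inherited from $Z'$), and $Z$ is $3$-mirrored by the $w$-kaleidoscope $(a,x,y,\mathcal{W}')$ — for (M1)--(M2) one uses that $z_0\in W_0^*$ with $W_0\notin\mathcal{W}'$ is not adjacent to $a$ and lies outside $\bigcup_{W\in\mathcal{W}'}W$, and (M3) is (iii) for $z_0$ and the hypothesis for $Z'$.

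Candidates for $z_0$ arise from Theorem~\ref{thm:evenwheeltheta}. For $W\in\mathcal{W}$ let $C_W$ be the hole $x\dd W\dd y\dd a\dd x$. Since $Z'$ is $3$-mirrored, $p$ and $q$ each have at least three neighbours in $W\subseteq C_W$, all off $N_W[x]\cup N_W[y]$, and at most one of $p,q$ is adjacent to $a$. A hole contains no triangle, so no set of at least three of its vertices is a clique; hence neither $p$ nor $q$ is $C_W$-bad, and clearly neither is $C_W$-good. So by Theorem~\ref{thm:evenwheeltheta} and $G\in\mathcal{E}$, the vertices $p,q$ must have a common neighbour in $C_W$, and it lies in $W\setminus(N_W[x]\cup N_W[y])\subseteq W^*$ (it is not $a$). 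Thus $K_W:=N_G(p)\cap N_G(q)\cap W^*\neq\emptyset$; let $z_W$ be its first vertex along $W$ from $x$. To arrange (ii), first apply Lemma~\ref{ramsey2} to the pairwise disjoint singletons $\{\{z_W\}\}_{W\in\mathcal{W}}$ — valid since $G$, being $C_4$-free, is $K_{2,2}$-free — to pass to a large subfamily $\mathcal{W}_1\subseteq\mathcal{W}$ in which the $z_W$ ($W\in\mathcal{W}_1$) are pairwise non-adjacent. If some $v\in Z'\setminus(\{p,q\}\cup(N_G(p)\cap N_G(q)))$ were adjacent to $z_W$ and $z_{W'}$ for distinct $W,W'\in\mathcal{W}_1$, then (as $v$ misses one of $p,q$, say $p\not\sim v$) the vertices $p,z_W,v,z_{W'}$ would induce a $C_4$, a contradiction; so each such $v$ kills at most one member of $\mathcal{W}_1$, and since $|Z'\setminus\{p,q\}|\le h-1$, all but at most $h-1$ of the holes in $\mathcal{W}_1$ have $z_W$ satisfying (ii).

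It remains to secure (iii) for one of these $z_W$, and here is the crux, invoking the class-preserving minors of Section~\ref{sec:<3}. Because $p\sim q$, Theorem~\ref{banana} cannot be applied directly to the internally disjoint paths $z_W\dd W\dd x$; instead, pass to an induced subgraph $G^\star$ of $G$ consisting of $p,q,x$, a further subfamily of holes (thinned so as to control cross-hole adjacencies), and, for each such $W$, just the segment of $W$ from $x$ up to and including $z_W$. Using that $z_W$ is the \emph{first} common neighbour of $p,q$ on $W$, one can arrange that $N_{G^\star}(p)\cap N_{G^\star}(q)$ is exactly the set of retained $z_W$'s and that this set is stable and consists of vertices of degree at most three in $G^\star$; then Theorem~\ref{thm:<3} gives $G^\star\tri_q^p\in\mathcal{E}$. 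In $G^\star\tri_q^p$ the contracted vertex $z^\star$ is non-adjacent to $x$ and is joined to $x$ by many internally disjoint paths, one through each retained $z_W$; applying Theorem~\ref{banana} there produces one $W_0$ with $z_0:=z_{W_0}$ having a neighbour, off $N_W[x]\cup N_W[y]$, in each of many other retained holes (from the ``staircase'' conclusion). Hence $z_0$ is $1$-mirrored by a $\kappa(3,t,w)$-subkaleidoscope of $(a,x,y,\mathcal{W}\setminus\{W_0\})$, and Theorem~\ref{thm:difftodiff} then yields a $w$-subkaleidoscope $(a,x,y,\mathcal{W}')$ that $3$-mirrors $z_0$, as required (with $\xi$ chosen upstream large enough to feed all the thinnings and the applications of Lemma~\ref{ramsey2} and Theorems~\ref{banana}, \ref{thm:<3}, \ref{thm:difftodiff}). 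The main obstacle is the construction and analysis of $G^\star$: making $N_{G^\star}(p)\cap N_{G^\star}(q)$ fit the hypotheses of Theorem~\ref{thm:<3} while keeping enough of the kaleidoscope to run Theorem~\ref{banana}, and controlling where the resulting ``staircase'' neighbours of $z_0$ land relative to $N_W[x]\cup N_W[y]$.
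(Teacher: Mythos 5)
Your outline matches the paper's in its ingredients and overall roadmap: you find the forward neighbours $z_1,z_2$ (your $p,q$), use Theorem~\ref{thm:evenwheeltheta} to get a common neighbour $z_W$ on each hole, take the first such common neighbour, apply Ramsey/Lemma~\ref{ramsey2} plus a small-cycle argument to control the adjacencies from $z_W$ into $Z'$, apply Theorem~\ref{thm:<3} to a contracted subgraph, apply Theorem~\ref{banana}, and finish with Theorem~\ref{thm:difftodiff}. However, the core step (finding a single $W_0$ whose $z_{W_0}$ has neighbours in many other holes) has a genuine logical gap as you have phrased it.

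You propose to thin the family so that $N_{G^{\star}}(p)\cap N_{G^{\star}}(q)$ consists of vertices of degree at most three in $G^{\star}$, contract to $G^{\star}\tri^p_q\in\mathcal{E}$ (where these vertices have degree exactly two), and then apply Theorem~\ref{banana} constructively to read off $W_0$ from the ``staircase.'' But this is self-defeating: the staircase conclusion of Theorem~\ref{banana} would say $z_{W_1}$ has a neighbour in the interior of $z_{W_j}\dd W_j\dd x$ for every $j>1$, giving $z_{W_1}$ degree at least three in $G^{\star}\tri^p_q$, which contradicts the degree-two property you just arranged. You cannot simultaneously have the thinning and use the staircase as a constructive source of $W_0$. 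What actually happens in the paper is a proof by contradiction paired with a degeneracy argument on an auxiliary digraph: let $D$ have an arc $(W,W')$ when $z_W$ has a neighbour in $W'^{*}$; \emph{assume} no vertex of $D$ has out-degree at least $\kappa$; then $D^{\natural}$ is sparse (every subgraph has a low-degree vertex), hence properly $2\kappa$-colourable, hence has a stable set $\mathcal{W}'_1$ of size $\psi$; on that stable set, the degree-control you want holds automatically; Theorem~\ref{thm:<3} applies; and Theorem~\ref{banana} then contradicts the stability. The contradiction shows that the assumption was false, i.e.\ some $W_0$ has out-degree at least $\kappa$, which is the desired conclusion. Your write-up is missing both the explicit contradiction framing and the digraph degeneracy/chromatic-number argument that actually produces the thinned subfamily.

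A secondary issue: you apply Lemma~\ref{ramsey2} to the singletons $\{z_W\}$, which only makes the $z_W$'s pairwise non-adjacent. To verify that the eventual $z_0=z_{W_0}$ is $1$-mirrored by the surviving subkaleidoscope, you need (M3): $z_0$ anticomplete to $N_W[x]\cup N_W[y]$ for each surviving $W$, i.e.\ to $\{x_W,y_W\}$. The paper secures this by applying Lemma~\ref{ramsey2} to the triples $\{x_W,z_W,y_W\}$ instead. You flag this concern at the end but do not resolve it; replacing singletons by triples in your application of Lemma~\ref{ramsey2} is the fix.
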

   
\begin{proof}
    Let $\psi=\psi(t,2)$ be as in Theorem~\ref{banana}. Let $\kappa=\kappa(3,t,w)$ be as in Theorem~\ref{thm:difftodiff}. Let $o=o(\cdot,\cdot,\cdot,\cdot)$ be as in Lemma~\ref{ramsey2}. We claim that
$\xi=\xi(h,t,w)=o(3h+2\psi\kappa,3,3,t)$
satisfies \ref{lem:mainblurryinduct}. Let $G\in \mathcal{E}_t$ be a graph and let $\nabla$ be $2$-tree on $h+2$ vertices. Let $Z'$ be a blurry copy of $\nabla/1$ in $G$ and let $(a,x,y,\mathcal{W}')$ be a $\xi$-kaleidoscope in $G$ by which $Z'$ is $3$-mirrored. Let $Y'$ be the spanning subgraph of $Z'$ such that $Y',Z'$ and $\nabla/1$ satisfy \ref{B1} and \ref{B2}. In particular, there is an isomorphism $f:V(\nabla/1)\rightarrow V(Y')$ between the $2$-trees $\nabla/1$ and $Y'$. Let $u_1,u_2$ be the two forward neighbors of $\varpi_{\nabla}(1)$ in $\nabla$ and let $z_i=f(u_i)$ for $i\in \{1,2\}$. Then we have $z_1,z_2\in V(Y')=Z'$. Since $Z'$ is a blurry copy of $\nabla/1$ which is $3$-mirrored by $(a,x,y,\mathcal{W}')$, and from the definition of a blurry copy and a mirrored set, it follows immediately that:

\sta{\label{st:plan} Let $z\in G\setminus Z'$ and $\mathcal{W}\subseteq \mathcal{W}'$ such that, $a$ is not adjacent to $z$,  we have $\{z_1,z_2\}\subseteq N_{Z'}(z)\subseteq N_{Z'}[z_1]
    \cap N_{Z'}[z_2]$, and we have $|\mathcal{W}|=w$ and $z$ is $3$-mirrored by the $w$-kaleidoscope $(a,x,y,\mathcal{W})$ in $G$. Then $Z=Z'\cup \{z\}$ is a blurry copy of $\nabla$ in $G$ which is $3$-mirrored by the $w$-kaleidoscope $(a,x,y,\mathcal{W})$ in $G$.}

Therefore, in order to prove Lemma~\ref{lem:mainblurryinduct}, it suffices to argue the existence of a vertex $z\in G\setminus Z'$ as well as a subset $\mathcal{W}$ of $\mathcal{W}'$ for which the three bullets points of \eqref{st:plan} hold. We devote the rest of the proof to this goal.

For every $W\in \mathcal{W}'$, let $C_W=a\dd x\dd W\dd y\dd a$. Then $C_W$ is a hole in $G$. Also, since $Z'$ is $3$-mirrored by $(a,x,y,\mathcal{W}')$, it follows from \ref{M2} and \ref{M3} that $a$ is not a common neighbor of $z_1$ and $z_2$, and that both $z_1$ and $z_2$ are $C_W$-ugly. This, combined with Theorem~\ref{thm:evenwheeltheta}, implies that:

\sta{\label{st:getcommon} For every $W\in \mathcal{W}'$, the vertices $z_1$ and $z_2$ have a common neighbor in $W$.}

Consequently, for every $W\in \mathcal{W}'$, traversing $W$ from $x$ to $y$, we may choose $z_W$ to be the first common neighbor of $z_1$ and $z_2$ in $W$. Note also that, by \ref{M3}, $\{z_1,z_2\}$ is anticomplete to $N_{W}[x]\cup N_{W}[y]$, which in turn shows that $z_W\in W\setminus (N_{W}[x]\cup N_{W}[y])$. For every $W\in \mathcal{W}'$, let $x_W,y_W$ be the neighbors of $x$ and $y$ in $W$, respectively (see Figure~\ref{fig:lhbij}). We now deduce that:

\sta{\label{st:antiinduct} There exists $\mathcal{W}_0\subseteq \mathcal{W}'$ with $|\mathcal{W}_0|=2\psi\kappa$ such that:
\begin{itemize}
    \item for all distinct $W,W'\in \mathcal{W}_0$, the sets $\{x_W,z_W,y_W\}$ and $\{x_{W'},z_{W'},y_{W'}\}$ are anticomplete in $G$; and
    \item for every $W\in \mathcal{W}_0$, we have $\{z_1,z_2\}\subseteq N_{Z'}(z_W)\subseteq N_{Z'}[z_1]\cap N_{Z'}[z_2]$.
\end{itemize}}

To see this, note that since $G$ is $(K_{3,3},K_t)$-free and from choice of $|\mathcal{W}'|=\xi$, we can apply Lemma~\ref{ramsey2} to the sets $\{\{x_W,z_W,y_W\}:W\in \mathcal{W}'\}$, and obtain a subset $\mathcal{W}'_0$ of $\mathcal{W}'$ with $|\mathcal{W}'_0|=3h+2\psi\kappa$ such that for all distinct $W,W'\in \mathcal{W}'_0$, the sets $\{x_W,z_W,y_W\}$ and $\{x_{W'},z_{W'},y_{W'}\}$ are anticomplete in $G$. It remains to show that there exists a subset $\mathcal{W}_0$ of $\mathcal{W}_0'$ with $|\mathcal{W}_0|=2\psi\kappa$ such that for every $W\in \mathcal{W}_0$, we have $\{z_1,z_2\}\subseteq N_{Z'}(z_W)\subseteq N_{Z'}[z_1]\cap N_{Z'}[z_2]$. Suppose not. Since $z_W$ is a common neighbor of $z_1$ and $z_2$ for all $W\in \mathcal{W}'$, it follows that there exists $\mathcal{W}''_0\subseteq \mathcal{W}'_0$ with $|\mathcal{W}''_0|=3h$ such that for every $W\in \mathcal{W}''_0$, $z_W$ has a neighbor $z'_W\in Z'\setminus \{z_1,z_2\}$ which is adjacent to at most one of $z_1$ and $z_2$. From this and the fact that $|Z'\setminus \{z_1,z_2\}|<|V(\nabla)|-2=h$, we deduce that there are three distinct paths $W_1,W_2,W_3\in \mathcal{W}''_0$ such that for some vertex $z'\in Z'\setminus \{z_1,z_2\}$, we have $z'_{W_1}=z'_{W_2}=z'_{W_3}=z'$. Recall also that $z_{W_1},z_{W_2},z_{W_3}$ are pairwise non-adjacent because $W_1,W_2,W_3\in \mathcal{W}''_0\subseteq \mathcal{W}'_0$. But now for some $i\in \{1,2\}$, there is a theta in $G$ with ends $z_i,z'$ and paths $z_i\dd z_{W_1}\dd z', z_i\dd z_{W_2}\dd z'$ and $z_i\dd z_{W_3}\dd z'$, a contradiction. This proves \eqref{st:antiinduct}.
\medskip

Let $\mathcal{W}_0$ be as in \eqref{st:antiinduct}. The following captures the bulk of the difficulty in this proof, also involving our application of Theorem~\ref{thm:<3}. Intuitively, the motivation is to apply Theorem~\ref{banana} to the ``paths in $\mathcal{W}_0$'' from $z_1$ to $x$. But we cannot; those paths are not induced as $z_1$ may have neighbours in them. So we pass to the contraption precisely to surmount this complication.

\sta{\label{st:applymiracle} There exist $W_0\in \mathcal{W}_0$ and $\mathcal{W}_1\subseteq \mathcal{W}_0\setminus \{W_0\}$ with $|\mathcal{W}_1|=\kappa$, such that for every $W\in \mathcal{W}_1$, $z_{W_0}$ has a neighbor in $W^*$.}

Let $D$ be the digraph with vertex set $\mathcal{W}_0$ where for distinct paths $W_1,W_2\in \mathcal{W}_0$, $(W_1,W_2)$ is an arc in $D$ if and only if $z_{W_1}$ has a neighbor in $W_2^*$. Note that in order to prove \eqref{st:applymiracle}, it is enough to show that $D$ has a vertex of out-degree at least $\kappa$. Suppose not. Then every vertex in $D$ has out-degree less than $\kappa$, which in turn implies that every vertex in every ``subdigraph'' of $D$ has out-degree less than $\kappa$. It follows that every ``subdigraph'' of $D$ has a vertex of in-degree less than $\kappa$.  Let $D^{\natural}$ be the underlying undirected graph of $D$ (which may have pairs of parallel edges). Then every subgraph of $D^{\natural}$ has a vertex of degree less than $2\kappa$, and so $D^{\natural}$ has chromatic number at most $2\kappa$. Consequently, there exists a stable set $\mathcal{W}'_1\subseteq \mathcal{W}_0=V(D^{\natural})$ in $D^{\natural}$ with $|\mathcal{W}'_1|=\lceil |\mathcal{W}_0|/2\kappa\rceil=\psi$. From the definition of $D$, it follows that for all distinct $W_1,W_2\in \mathcal{W}'_1$, $z_{W_1}$ is anticomplete to $W_2^*$. Let $G_1=G[(\bigcup_{W\in \mathcal{W}'_1}V(z_W\dd W\dd x))\cup \{z_1,z_2\}]$. Then we have $G_1\in \mathcal{E}$, $z_1,z_2\in V(G_1)$ are distinct and adjacent, and $N_{G_1}(z_1)\cap N_{G_1}(z_2)=\{z_W:W\in \mathcal{W}'_1\}$ is a stable set of vertices of degree three in $G_1$. Let $G'_1$ be the $z_1z_2$-contraption of $G_1$. We are now prepared to apply Theorem~\ref{thm:<3} and deduce that $G'_1\in \mathcal{E}$. Let $z\in V(G'_1)$ be as in the definition of $G'_1\in \mathcal{E}$. Then we have $V(G'_1)=(\bigcup_{W\in \mathcal{W}'_1}V(z_W\dd W\dd x ))\cup \{z\}$ and $N_{G'_1}(z)=N_{G_1}(z_1)\cap N_{G_1}(z_2)=\{z_W:W\in \mathcal{W}'_1\}$ is a stable set of vertices of degree two in $G'_1$. Also, $\{z\dd z_W\dd W\dd x:W\in \mathcal{W}_1'\}$ is a collection of $\psi$ pairwise internally disjoint paths in $G'_1$ between non-adjacent vertices $z$ and $x$.
Hence, by the choice of $\psi$, we can apply Theorem~\ref{banana} to this collection, and obtain $W_1,W_2\in \mathcal{W}_1'$ such that $z_{W_1}$ has a neighbor in the interior of $z_{W_2}\dd W_2\dd x$ in $G'_1$. But then $z_{W_1}$ has degree at least three in $G'_1$, a contradiction. This proves \eqref{st:applymiracle}.

\medskip

To finish the proof, let $W_0$ and $\mathcal{W}_1$ be as in \eqref{st:applymiracle}, and write $z=z_{W_0}$. Then $z\in G\setminus Z'$ and $a$ is not adjacent to $z$ because $z\in W_0^*$, which implies that $a$ satisfies the first bullet of \eqref{st:plan}. Also, the second bullet of \eqref{st:antiinduct} implies that $z$ satisfies the first second of \eqref{st:plan}. In addition, from the first bullet of \eqref{st:antiinduct} combined with \eqref{st:applymiracle} and the fact that $z\in W_0\setminus (N_{W_0}[x]\cup N_{W_0}[y])$, it follows that $z$ is $1$-mirrored by the $\kappa$-kaleidoscope $(a,x,y,\mathcal{W}_1)$. Due to the choice of $\kappa$, we may apply Theorem~\ref{thm:difftodiff} and deduce that there exists $\mathcal{W}\subseteq \mathcal{W}_1\subseteq \mathcal{W}_0\subseteq \mathcal{W}'$ with $|\mathcal{W}|=w$ such that $z$ is $3$-mirrored by the $w$-kaleidoscope $(a,x,y,\mathcal{W})$. Hence, $z$ and $\mathcal{W}$ satisfy the third bullet of \eqref{st:plan}. This completes the proof of Lemma~\ref{lem:mainblurryinduct}.
    \end{proof}

We now use Lemma~\ref{lem:mainblurryinduct} to prove the main result of this section:

    \begin{theorem}\label{thm:mainblurry}
      For all integers $h\geq 0$ and $t,w\geq 1$, there is an integer $\Xi(h,t,w)\geq 1$ with the following property. Let $G\in \mathcal{E}_t$ be a graph and let $\nabla$ be a $2$-tree on $h+2$ vertices. Assume that there is a clique $Z_0$ in $G$ with $|Z_0|=2$ which is $3$-mirrored by a $\Xi$-kaleidoscope in $G$. Then there is a blurry copy $Z$ of $\nabla$ in $G$ which is $3$-mirrored by a $w$-kaleidoscope in $G$.
    \end{theorem}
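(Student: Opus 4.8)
The plan is to derive Theorem~\ref{thm:mainblurry} from Lemma~\ref{lem:mainblurryinduct} by a short induction on $h$, using the clique $Z_0$ itself as a (degenerate) blurry copy at the base of the induction and a single application of Lemma~\ref{lem:mainblurryinduct} for each inductive step.

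For the base case $h=0$, the $2$-tree $\nabla$ on $h+2=2$ vertices is just $K_2$, and I would set $\Xi(0,t,w)=w$. Then, given a clique $Z_0$ with $|Z_0|=2$ that is $3$-mirrored by the corresponding $w$-kaleidoscope $(a,x,y,\mathcal{W})$, the induced subgraph $G[Z_0]$ is a $2$-tree isomorphic to $\nabla$, and since $G[Z_0]$ has no non-edge, condition~\ref{B2} is vacuous; hence $Z=Z_0$ is a blurry copy of $\nabla$, already $3$-mirrored by $(a,x,y,\mathcal{W})$.

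For the inductive step ($h\geq 1$), I would assume the statement for $h-1$ (for all values of the last parameter), let $\xi(\cdot,\cdot,\cdot)$ be as in Lemma~\ref{lem:mainblurryinduct}, and set $\Xi(h,t,w)=\Xi(h-1,t,\xi(h,t,w))$. Given $G\in\mathcal{E}_t$, a $2$-tree $\nabla$ on $h+2$ vertices, and a clique $Z_0$ with $|Z_0|=2$ that is $3$-mirrored by a $\Xi(h,t,w)$-kaleidoscope, I would first note that $\nabla/1$ is a $2$-tree on $(h-1)+2$ vertices and apply the induction hypothesis to $G$, to $\nabla/1$, and to $Z_0$ (which, by the choice of $\Xi(h,t,w)$, is $3$-mirrored by a $\Xi(h-1,t,\xi(h,t,w))$-kaleidoscope). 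This produces a blurry copy $Z'$ of $\nabla/1$ in $G$ that is $3$-mirrored by a $\xi(h,t,w)$-kaleidoscope. I would then apply Lemma~\ref{lem:mainblurryinduct} with parameters $h,t,w$, to $G$, to $\nabla$, and to this blurry copy $Z'$ of $\nabla/1$, which yields a blurry copy $Z$ of $\nabla$ in $G$ that is $3$-mirrored by a $w$-kaleidoscope --- precisely the conclusion sought.

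I do not anticipate any real difficulty here: all the substance of the argument is contained in Lemma~\ref{lem:mainblurryinduct}. The only points to verify are bookkeeping ones --- that $\nabla/1$ is indeed a $2$-tree on $(h-1)+2$ vertices so that the induction hypothesis applies to it, that condition~\ref{B2} is vacuous when the blurry copy in question is a clique (as used in the base case), and that the parameters have been threaded so that the kaleidoscope size output by the induction hypothesis matches the input demanded by Lemma~\ref{lem:mainblurryinduct}.
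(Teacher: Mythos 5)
Your proposal is correct and takes essentially the same approach as the paper: both proofs set up the same recursion for $\Xi$ (yours stated as $\Xi(h,t,w)=\Xi(h-1,t,\xi(h,t,w))$, the paper's as a backward-defined sequence $\Xi_h=w$, $\Xi_i=\xi(i+1,t,\Xi_{i+1})$, which unwinds to the identical composition of $\xi$'s), handle the base case via the trivial observation that the $2$-vertex clique is itself a blurry copy of $K_2$, and then peel off one vertex at a time via Lemma~\ref{lem:mainblurryinduct}. The only cosmetic difference is that you induct directly on $h$ while the paper fixes $h$ and inducts on an auxiliary index $i$ running through the truncations $\nabla/(h-i)$.
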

    \begin{proof}
        We begin with defining a sequence $\{\Xi_i\}_{i=0}^h$ using a backward recursion. Let  $\Xi_h=w$. For every $0\leq i<h$, let $\Xi_i=\xi(i+1,t,\Xi_{i+1})$, where $\xi(\cdot,\cdot,\cdot)$ is as in Lemma~\ref{lem:mainblurryinduct}. We claim that $\Xi=\Xi(h,t,w)=\Xi_0$ satisfies \ref{thm:mainblurry}. In fact, we prove a slightly stronger statement which is tailored to our inductive argument:

    \sta{\label{st:inductwithin} Let $i\in \{0,\ldots, h\}$. Assume that there is a clique $Z_0$ in $G$ with $|Z_0|=2$ which is $3$-mirrored by a $\Xi_0$-kaleidoscope in $G$. Then there is a blurry copy $Z_i$ of $\nabla/(h-i)$ in $G$ which is $3$-mirrored by a $\Xi_i$-kaleidoscope in $G$.}

   We induct on $i$. The case $i=0$ is trivial as $\nabla/h$ is a $2$-vertex complete graph. Suppose $h\geq 1$ and $i\in [h]$. Assume that there is a clique $Z_0$ in $G$ with $|Z_0|=2$ which is $3$-mirrored by a $\Xi_0$-kaleidoscope in $G$. Then $\nabla/(h-i)$ is a $2$-tree on $i+2$ vertices, and by the induction hypothesis, there is a blurry copy $Z_{i-1}$ of $\nabla/(h-i+1)=(\nabla/(h-i))/1$ in $G$ which is $3$-mirrored by a $\Xi_{i-1}$-kaleidoscope in $G$, where $\Xi_{i-1}=\xi(i,t,\Xi_i)$. Hence, it follows from Lemma~\ref{lem:mainblurryinduct} that there is a blurry copy $Z_{i}$ of $\nabla/h-i$ in $G$ which is $3$-mirrored by a $\Xi_{i}$-kaleidoscope in $G$. This proves \eqref{st:inductwithin}.

   \medskip

   Now the result follows from \eqref{st:inductwithin} for $i=h$. This completes the proof of Theorem~\ref{thm:mainblurry}.
    \end{proof}

\section{The coda}\label{sec:end}
With Theorems~\ref{thm:kaleidoscopeexists} and \ref{thm:mainblurry} in our arsenal, we are ready to prove Theorems~\ref{thm:main2-tree} and \ref{thm:tree+}. In fact, both results follow directly from the one below:
\begin{theorem}\label{thm:bigtwblurry}
    For all integers $h\geq 0$ and $t\geq 1$, there exists an integer $\Omega=\Omega(h,t)$ such that for every $2$-tree $\nabla$ on $h$ vertices and every graph $G\in \mathcal{E}_t$ with $\tw(G)>\Omega$, there is a blurry copy of $\nabla$ in $G$.
\end{theorem}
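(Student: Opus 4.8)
The plan is to stitch together the two principal engines built in the previous sections---Theorem~\ref{thm:kaleidoscopeexists}, which produces a $3$-mirrored clique of size two, and Theorem~\ref{thm:mainblurry}, which grows such a clique into a blurry copy of a prescribed $2$-tree---and to supply the hypothesis of the former via the large strong block guaranteed by Corollary~\ref{cor:noblocksmalltw_Ct}.

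First I would discard the trivial range: since the smallest $2$-tree is $K_2$, there is no $2$-tree on $h$ vertices when $h\le 1$ and the statement is vacuous, so assume $h\ge 2$ and set $h'=h-2\ge 0$. I then fix the constants in reverse order of the quantifiers: let $w=\Xi(h',t,1)$ be as in Theorem~\ref{thm:mainblurry}, let $\zeta=\zeta(3,t,w)$ be as in Theorem~\ref{thm:kaleidoscopeexists}, put $k=\max\{\zeta,t\}$, and finally set $\Omega=\Omega(h,t)=\beta(k,t)$, where $\beta$ is as in Corollary~\ref{cor:noblocksmalltw_Ct}.

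Now take $\nabla$ a $2$-tree on $h$ vertices and $G\in\mathcal{E}_t$ with $\tw(G)>\Omega$. As $\mathcal{E}_t$ is contained in the class of (theta, prism, $K_t$)-free graphs, Corollary~\ref{cor:noblocksmalltw_Ct} together with $\tw(G)>\beta(k,t)$ forces $G$ to contain a strong $k$-block $B$. Since $|B|\ge k\ge t$ and $G$ is $K_t$-free, $B$ cannot be a clique, so it contains two distinct non-adjacent vertices $a$ and $b$; by the definition of a strong $k$-block there is a collection $\mathcal{P}_{\{a,b\}}$ of at least $k\ge\zeta$ pairwise internally disjoint paths in $G$ from $a$ to $b$. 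Applying Theorem~\ref{thm:kaleidoscopeexists} with $d=3$ and this value of $w$ yields a $w$-kaleidoscope $(a,x,y,\mathcal{W})$ in $G$ and a clique $Z_0$ with $|Z_0|=2$ that is $3$-mirrored by $(a,x,y,\mathcal{W})$ (we do not need conclusion~(b) about adjacency to $a$). Since $w=\Xi(h',t,1)$, $G\in\mathcal{E}_t$, and $\nabla$ is a $2$-tree on $h'+2=h$ vertices, Theorem~\ref{thm:mainblurry} (invoked with output parameter $1$) now gives a blurry copy $Z$ of $\nabla$ in $G$, which is what we want. Finally, Theorems~\ref{thm:main2-tree} and \ref{thm:tree+} follow at once by combining this with Observation~\ref{obs:blurry}, using that $\cone(T)$ is a $2$-tree whenever $T$ is a tree.

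There is no genuinely hard step here: everything is bookkeeping around already-established results. The one small point worth isolating is the observation that a strong $k$-block of a $K_t$-free graph with $k\ge t$ necessarily contains a non-adjacent pair, which is exactly what licenses the application of Theorem~\ref{thm:kaleidoscopeexists}. The two things to be careful about are the index shift between ``$2$-tree on $h$ vertices'' in this statement and ``$2$-tree on $h+2$ vertices'' in Theorem~\ref{thm:mainblurry} (and Lemma~\ref{lem:mainblurryinduct}), and matching the freeness hypotheses of each invoked result against membership in $\mathcal{E}_t$ (Theorem~\ref{thm:kaleidoscopeexists} asks only for (theta, prism, $K_t$)-freeness, while Theorem~\ref{thm:mainblurry} asks for $G\in\mathcal{E}_t$).
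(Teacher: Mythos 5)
Your proof is correct and follows essentially the same route as the paper's: obtain a strong block from Corollary~\ref{cor:noblocksmalltw_Ct}, feed its parallel paths into Theorem~\ref{thm:kaleidoscopeexists} to get a $3$-mirrored $2$-clique, then grow it via Theorem~\ref{thm:mainblurry}. You are slightly more careful than the paper on two points of bookkeeping --- dispatching the vacuous case $h\le 1$ explicitly, and using $\Xi(h-2,t,1)$ rather than the paper's $\Xi(h,t,1)$, which matches the ``$2$-tree on $h+2$ vertices'' convention in Theorem~\ref{thm:mainblurry} exactly instead of relying on monotonicity of $\Xi$ in its first argument.
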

\begin{proof}
    Let $\Xi=\Xi(h,t,1)$ be as in Theorem~\ref{thm:mainblurry} and let $\zeta=\zeta(3,t,\Xi)$ be as in Theorem~\ref{thm:kaleidoscopeexists}. We show that $\Omega=\Omega(h,t)=\beta(\max\{\zeta,t\},t)$ satisfies \ref{thm:bigtwblurry}, where $\beta(\cdot,\cdot)$ is as in Corollary~\ref{cor:noblocksmalltw_Ct}. Let $\nabla$ be a $2$-tree on $h$-vertices and let $G\in \mathcal{E}_t$ be a graph with $\tw(G)>\Omega$. From Corollary~\ref{cor:noblocksmalltw_Ct},  it follows that $G$ has a strong $\max\{\zeta,t\}$-block. Since $G$  is $K_t$-free, it follows that there are non-adjacent vertices $a,b\in V(G)$ for which there exists a collection $\mathcal{P}$ of $\zeta$ pairwise internally disjoint paths in $G$ from $a$ to $b$. By Theorem~\ref{thm:kaleidoscopeexists},  there exists a clique $Z_0$ in $G$ with $|Z_0|=2$ which is $3$-mirrored by a $\Xi$-kaleidoscope in $G$. This, along with Theorem~\ref{thm:mainblurry}, implies that there is a blurry copy of $\nabla$ in $G$, as desired.
\end{proof}
Theorem~\ref{thm:main2-tree} is now immediate:
\setcounter{section}{1}
\setcounter{theorem}{11}
\begin{theorem}
    For every $2$-tree $\nabla$, there exists an integer $\Upsilon=\Upsilon(\nabla)\geq 1$ such that every graph $G\in \mathcal{E}_4$ with $\tw(G)>\Upsilon$ contains $\nabla$.
\end{theorem}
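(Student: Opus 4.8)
The plan is to obtain Theorem~\ref{thm:main2-tree} as an immediate consequence of Theorem~\ref{thm:bigtwblurry} together with Observation~\ref{obs:blurry}; there is essentially nothing left to do once those are in hand. Given a $2$-tree $\nabla$, I would set $h=|V(\nabla)|$ (recall $h\geq 2$, since by definition a $2$-tree is either $K_2$ or has more than two vertices), and define $\Upsilon(\nabla)=\Omega(h,4)$, where $\Omega(\cdot,\cdot)$ is the constant from Theorem~\ref{thm:bigtwblurry}.

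Now let $G\in\mathcal{E}_4$ with $\tw(G)>\Upsilon(\nabla)$. Since $\mathcal{E}_4$ is by definition the class of $K_4$-free graphs lying in $\mathcal{E}$, in particular $G\in\mathcal{E}_4=\mathcal{E}_t$ with $t=4$, so Theorem~\ref{thm:bigtwblurry} applies and yields a blurry copy of $\nabla$ in $G$. Because $G$ is $K_4$-free, the second assertion of Observation~\ref{obs:blurry} upgrades this to an induced subgraph of $G$ isomorphic to $\nabla$; that is, $G$ contains $\nabla$, as required. Note that the statement to be proved is one-directional (it only asserts containment), so no appeal to Theorem~\ref{layeredwheel1} or Observation~\ref{obs} for a converse is needed here.

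I do not expect a genuine obstacle at this step: all the substance has been front-loaded into Theorem~\ref{thm:bigtwblurry}, which itself chains together Corollary~\ref{cor:noblocksmalltw_Ct} (large treewidth in a $(\text{theta},\text{prism},K_t)$-free graph forces a strong $\max\{\zeta,t\}$-block, hence, using $K_t$-freeness, two non-adjacent vertices joined by many pairwise internally disjoint paths), Theorem~\ref{thm:kaleidoscopeexists} (producing an initial $2$-clique $3$-mirrored by a large kaleidoscope), and Theorem~\ref{thm:mainblurry} (iteratively growing the blurry $2$-tree while preserving the ``$3$-mirrored by a large kaleidoscope'' invariant). The only things worth double-checking while writing are the purely bookkeeping points that $\mathcal{E}_4\subseteq\mathcal{E}$ so the hypotheses of Theorem~\ref{thm:bigtwblurry} are met, and that the $K_4$-freeness built into $\mathcal{E}_4$ is precisely what is needed to pass from ``subgraph'' to ``induced subgraph'' in Observation~\ref{obs:blurry}. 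Accordingly, I would simply fix $\Upsilon(\nabla)=\Omega(h,4)$, invoke Theorem~\ref{thm:bigtwblurry} and then Observation~\ref{obs:blurry}, and conclude.
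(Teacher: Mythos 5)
Your proposal is correct and follows the paper's proof exactly: set $\Upsilon(\nabla)=\Omega(|V(\nabla)|,4)$, apply Theorem~\ref{thm:bigtwblurry} to obtain a blurry copy of $\nabla$, and use the $K_4$-freeness built into $\mathcal{E}_4$ together with Observation~\ref{obs:blurry} to conclude that $G$ contains $\nabla$ as an induced subgraph.
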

\begin{proof}
    Let $\Upsilon(\nabla)=\Omega(|V(\nabla)|,4)$. Then the result follows from Theorem~\ref{thm:bigtwblurry} and Observation~\ref{obs:blurry}.
\end{proof}
To prove Theorem~\ref{thm:tree+}, we also need a result from \cite{KP}. For integers $d,r\geq 0$, let $T_d^r$ denote the rooted tree in which every leaf is at distance $r$ from the root, the root has degree $d$, and every vertex that is neither a leaf nor the root has degree $d+1$.
\setcounter{section}{6}
\setcounter{theorem}{1}
\begin{theorem}[Kierstead and Penrice \cite{KP}]\label{KP}
    For all integers $d, r\geq 0$ and $s, t\geq 1$, there exists an integer $f= f(d,r,s,t)\geq 1$ such that if a graph $G$ contains $T_f^f$ as a subgraph, then $G$ contains one of $K_{s,s}$, $K_t$ or $T_d^r$ as an induced subgraph.
\end{theorem}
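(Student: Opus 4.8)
The plan is to prove Theorem~\ref{KP} by induction on $r$, in the following strengthened form: for all $r\geq 0$ and $d,s,t,k\geq 1$ there is $f=f(d,r,s,t,k)$ such that if $G$ is $(K_{s,s},K_t)$-free and contains, as a subgraph, a copy $H$ of $T_f^f$ rooted at a vertex $w$, and if $Z\subseteq V(G)$ with $|Z|\leq k$, then $G$ contains an induced subgraph $X\subseteq V(H)$ with $X\cong T_d^r$ and $Z$ anticomplete to $X$ (allowing a single distinguished vertex of $Z$ — the eventual parent — to be anticomplete to $X$ minus its root rather than to all of $X$). Taking $k=0$ and noting that the top $r$ levels of $T_f^f$ already contain $T_f^r\supseteq T_d^r$ as a subgraph, this yields Theorem~\ref{KP}; and we may assume $G$ is $(K_{s,s},K_t)$-free, since otherwise we are done at once. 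The base case $r=0$ is trivial.

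For the inductive step ($r\geq 1$), I would root $H$ at $w$, with children $w_1,\dots,w_f$ and corresponding sub-bushes $B_1,\dots,B_f$, each isomorphic to $T_f^{f-1}$ and hence containing a copy of $T_{f-1}^{f-1}$. Choosing $f(d,r,s,t,k)$ so that $f-1$ is comfortably larger than $f(d,r-1,s,t,k+1)$, I would apply the induction hypothesis inside each $B_i$ with the enlarged forbidden set $Z\cup\{w\}$, obtaining an induced $X_i\subseteq V(B_i)$ with $X_i\cong T_d^{r-1}$, anticomplete to $Z$ and with $w$ anticomplete to $X_i$ except (possibly) at its root. Since the $X_i$ are pairwise disjoint and each has the bounded size $|V(T_d^{r-1})|$, Lemma~\ref{ramsey2} (using that $G$ is $(K_{s,s},K_t)$-free) produces $d$ of them, say $X_{i_1},\dots,X_{i_d}$, that are pairwise anticomplete in $G$; adding a common neighbour of their roots — for which $w$, or a suitable vertex just below $w$, will serve — then assembles an induced $T_d^r$ with the required anticompleteness to $Z$.

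The hard part will be controlling the adjacency between the ``ancestor'' vertices (those in $Z\cup\{w\}$) and the deeper portions of the sub-bushes: such a vertex may have neighbours scattered through every level of a sub-bush, so that no honest downward sub-bush avoiding its neighbourhood survives, and indeed the analogous statement with $T_d^r$ replaced by an unbounded-depth tree is false. This is precisely where the full bushiness of $T_f^f$ must be exploited: rather than taking honest sub-bushes, one allows the induced $T_d^{r-1}$ to be spread across non-consecutive levels of $B_i$ and to route its different root-leaf branches through different parts of $B_i$, choosing at each growth step — by repeated pigeonhole over the $f$ available branches together with Ramsey (Theorem~\ref{classicalramsey}) over the bounded-size sets of candidates, and extracting $K_{s,s}$ or $K_t$ whenever some forbidden vertex behaves too densely — a continuation that simultaneously dodges the neighbourhoods of all (boundedly many) ancestors. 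The bookkeeping needed to make this precise — tracking the current partial tree, the forbidden set of size at most $k\leq r+1$, and the reserve of sub-bushes still available below each growth point, and verifying that $f(d,r,s,t,k)$ can be taken large enough to absorb every loss — is routine but lengthy, and I would omit it, referring to \cite{KP} for the details.
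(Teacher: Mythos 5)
The paper does not prove Theorem~\ref{KP}; it is quoted from Kierstead and Penrice \cite{KP} as a black box with no proof supplied, so there is no argument in the paper against which to compare your sketch.

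Taken on its own terms, your outline has approximately the right shape (induction on $r$ carrying along a bounded forbidden set $Z$, using Ramsey to prune sub-bushes), but it is not a proof, and the part you dismiss as ``routine bookkeeping'' is precisely where the content of Kierstead--Penrice lies. Two places where the sketch has a genuine gap. First, your strengthened hypothesis only asserts that the designated ``eventual parent'' is anticomplete to $X$ minus its root; it does not assert that this vertex is \emph{adjacent} to the root, nor does it pin down where the root of $X_i$ sits inside $B_i$. So once Lemma~\ref{ramsey2} gives you $d$ pairwise anticomplete copies $X_{i_1},\dots,X_{i_d}$, whose roots may lie at scattered depths in different sub-bushes, you have no vertex known to be adjacent in $G$ to all $d$ roots and to nothing else in their union; the clause ``$w$, or a suitable vertex just below $w$, will serve'' is exactly the claim that needs proof, and nothing in the inductive hypothesis supports it. Second, the mechanism for ``dodging'' a forbidden vertex $z$ is not indicated: in a $(K_{s,s},K_t)$-free graph with $s\geq 2$, a single vertex can still have arbitrarily large degree with a stable neighbourhood (a star contains neither $K_{2,2}$ nor a triangle), so $z$'s neighbourhood may pierce every level of every sub-bush; one cannot route around it by mere pigeonhole, and $z$ alone cannot supply the $K_{s,s}$ you hope to extract. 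Since you explicitly defer to \cite{KP} for these points, what you have written is an honest sketch rather than a proof; for the purposes of this paper, citing \cite{KP} outright, as the authors do, is the correct resolution.
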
 
Finally, we prove Theorem~\ref{thm:tree+}:
\setcounter{section}{1}
\setcounter{theorem}{12}
\begin{theorem}
    For every integer $t\geq 1$ and every tree $T$, there exists an integer $\Gamma=\Gamma(t,T)\geq 1$ such that every graph $G\in \mathcal{E}_t$ with $\tw(G)>\Gamma$ contains $\cone(T)$.
\end{theorem}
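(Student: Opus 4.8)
The plan is to mimic the deduction of Theorem~\ref{thm:main2-tree} from Theorem~\ref{thm:bigtwblurry}, while coping with the fact that, since we no longer assume $t\le 4$, a blurry copy of $\cone(T)$ need not be \emph{induced} (Observation~\ref{obs:blurry} upgrades a blurry copy to an induced subgraph only when the host is $K_4$-free). We circumvent this by replacing $T$ with a large Kierstead--Penrice tree before invoking Theorem~\ref{thm:bigtwblurry}, and then using Theorem~\ref{KP} to recover a clean copy of $T$.

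First, observe that for \emph{any} tree $S$ the graph $\cone(S)$ is a $2$-tree whose canonical vertex ordering can be chosen to end at the apex: if $v_1,\ldots,v_N$ is a leaf-elimination ordering of $S$, then the ordering $v_1,\ldots,v_N,u$ of $\cone(S)$ (with $u$ the apex) witnesses that $\cone(S)$ is a $2$-tree, since for each $i\le N-1$ the forward neighbours of $v_i$ are its unique forward neighbour in $S$ together with $u$, and these two are adjacent. The point of putting $u$ last is this: if $Z$ is a blurry copy of $\cone(S)$ in a graph $G$ with spanning $2$-tree $Y\cong\cone(S)$, and $u\in V(Z)$ is the vertex corresponding to the apex, then no ``extra'' edge of $Z$ (an edge of $Z$ absent from $Y$, as in \ref{B2}) can be incident to $u$: $u$ cannot be the earlier endpoint of such an edge, being last in the ordering, and it cannot be the later endpoint either, since in $Y\cong\cone(S)$ the apex is adjacent to every other vertex, so no non-edge of $Y$ touches $u$. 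Hence $u$ is adjacent \emph{in $G$} to every vertex of $A:=V(Z)\setminus\{u\}$, and moreover $Z[A]\supseteq Y[A]\cong S$, so $Z[A]$ contains $S$ as a (not necessarily induced) subgraph.

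Now carry out the argument. Given a tree $T$, choose $d,r\ge 0$ large enough --- say $d=r=|V(T)|$ --- that $T$ occurs as a subgraph of $T_d^r$, and hence as an \emph{induced} subgraph, since the vertex set of a subtree of a tree induces exactly that subtree (a forest on $|V(T)|$ vertices with $|V(T)|-1$ edges is a tree). Let $f=f(d,r,2,t)$ be as in Theorem~\ref{KP}, put $\nabla:=\cone(T_f^f)$, a $2$-tree on $|V(T_f^f)|+1$ vertices, and set $\Gamma(t,T):=\Omega\bigl(|V(T_f^f)|+1,\,t\bigr)$ with $\Omega$ as in Theorem~\ref{thm:bigtwblurry}. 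Let $G\in\mathcal{E}_t$ with $\tw(G)>\Gamma(t,T)$. By Theorem~\ref{thm:bigtwblurry} there is a blurry copy $Z$ of $\nabla$ in $G$, and by the previous paragraph (applied with $S=T_f^f$) there is $u\in V(Z)$ complete in $G$ to $A:=V(Z)\setminus\{u\}$ with $Z[A]$ containing $T_f^f$ as a subgraph. Since $Z[A]$ is an induced subgraph of $G\in\mathcal{E}_t$, it is both $K_t$-free and $C_4$-free, hence has no induced $K_{2,2}$; so applying Theorem~\ref{KP} to $Z[A]$ with $s=2$ forces $Z[A]$ to contain an induced copy of $T_d^r$, on some vertex set $B\subseteq A$. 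By the choice of $d$ and $r$ there is $B'\subseteq B$ with $G[B']=Z[B']\cong T$; and since $u$ is complete to $B'\subseteq A$ in $G$, the induced subgraph $G[B'\cup\{u\}]$ is isomorphic to $\cone(T)$. Thus $G$ contains $\cone(T)$, as required.

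The single genuine difficulty is the induced-versus-subgraph gap created by blurry copies when $t>4$; the resolution is the Kierstead--Penrice detour, which crucially exploits $C_4$-freeness (to kill $K_{s,s}$) and $K_t$-freeness. Once this is in place the rest is bookkeeping, structurally identical to the derivation of Theorem~\ref{thm:main2-tree} from Theorem~\ref{thm:bigtwblurry} and Observation~\ref{obs:blurry}.
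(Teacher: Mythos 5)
Your proof is correct and follows essentially the same route as the paper's: apply Theorem~\ref{thm:bigtwblurry} to get a blurry copy of a coned Kierstead--Penrice tree, observe that the apex is complete in $G$ to the remaining vertices, and invoke Theorem~\ref{KP} on the induced subgraph spanned by those remaining vertices to extract an induced copy of $T$. Two small remarks. First, the paper's displayed proof appears to contain a typo --- it sets $T^+ = \cone(T)$ and $\Gamma = \Omega(|V(T^+)|,t)$, yet then asserts that the $|V(T)|$ non-apex vertices of the blurry copy carry a spanning $T_f^f$; the intended definition must be $T^+ = \cone(T_f^f)$, which is exactly what you correctly wrote. Second, the parameter choices differ harmlessly: you take $d=r=|V(T)|$ and $s=2$ (exploiting $C_4$-freeness), whereas the paper takes $d,r$ to be the maximum degree and radius of $T$ and $s=3$ (exploiting $K_{3,3}$-freeness); both are valid. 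Your aside about the apex not being incident to any ``extra'' edge of the blurry copy is not actually needed --- completeness of $u$ to $A$ in $G$ already follows because the cone edges of $Y$ are edges of $Z$, and $Z$ is induced in $G$ --- but it is harmless and does clarify the picture.
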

\begin{proof}
    Let $d$ and $r$ be the maximum degree and the radius of $T$, respectively. It follows that $T_d^r$ contains $T$ as an induced subgraph. Let $f = f (d, r, 3, t)$ be as in Theorem~\ref{KP} and let $T^+=\cone(T)$. We claim that $\Gamma=\Gamma(t,T)=\Omega(|V(T^+)|,t)$ satisfies \ref{thm:tree+}, where $\Omega(\cdot,\cdot)$ comes from Theorem~\ref{thm:bigtwblurry}. Let $G\in \mathcal{E}_t$ be a graph of treewidth more than $\Gamma$. From Theorem~\ref{thm:bigtwblurry} and Observation~\ref{obs:blurry}, we deduce that there exists $X\subseteq V(G)$ such that $G[X]$ has a spanning subgraph isomorphic to the $2$-tree $T^+$. As a result, there exists a vertex $x\in X$ complete to $X\setminus \{x\}$ such that  $G[X\setminus \{x\}]$ has a spanning subgraph isomorphic to $T_f^f$. Since $G$ is $\{K_{3,3},K_t\}$-free, it follows from Theorem~\ref{KP} that $G[X\setminus \{x\}]$ contains $T_d^r$ (as an induced subgraph). Hence, $G[X\setminus \{x\}]$ contains $T$, and so $G$ contains $\cone(T)$, as required.
\end{proof}
\setcounter{section}{6}
\section{Acknowledgement}

Tara Abrishami worked with us on our initial attempts at Conjecture~\ref{diamondconj} using a different method (which led to \cite{twiv}), and Rose McCarty worked with us when we were suspicious of Theorem~\ref{mainchordal} and tried to find a counterexample. We thank them both.

\bibliographystyle{abbrv}
	\bibliography{ref}  

\end{document}